\newtheorem{Assumption}{Assumption}
\renewcommand{\R}{\mathbb{R}}
\renewcommand{\H}{\mathbb{H}}
\newcommand{\Di}{\mathbb{D}}
\renewcommand{\C}{\mathbb{C}}
\renewcommand{\N}{\mathbb{N}}
\renewcommand{\Q}{\mathbb{Q}}
\newcommand{\M}{\mathbb{M}}
\renewcommand{\Z}{\mathbb{Z}}
\renewcommand{\E}{\mathbb{E}}
\renewcommand{\P}{\mathbb{P}}
\newcommand{\cF}{\mathcal{F}}
\newcommand{\cH}{\mathcal{H}}
\newcommand{\cL}{\mathcal{L}}
\newcommand{\cM}{\mathcal{M}}
\def\rw{\mathrm{w}}
\def\rb{\mathrm{b}}
\colorlet{darkblue}{blue!90!black}
\colorlet{darkred}{red!90!black}
\colorlet{darkgreen}{green!50!black}
\colorlet{darkyellow}{yellow!90!black}
\def\Rangle{\rangle\!\!\!\rangle}
\def\Langle{\langle\!\!\!\langle}
\def\RRangle{\Big\rangle\!\!\!\Big\rangle}
\def\LLangle{\Big\langle\!\!\!\Big\langle}
\definecolor{Red}{rgb}{1,0,0}
\definecolor{Blue}{rgb}{0,0,1}
\definecolor{Olive}{rgb}{0.41,0.55,0.13}
\definecolor{Yarok}{rgb}{0,0.5,0}
\definecolor{Green}{rgb}{0,1,0}
\definecolor{MGreen}{rgb}{0,0.8,0}
\definecolor{DGreen}{rgb}{0,0.65,0}
\definecolor{Yellow}{rgb}{1,1,0}
\definecolor{Cyan}{rgb}{0,1,1}
\definecolor{Magenta}{rgb}{1,0,1}
\definecolor{Orange}{rgb}{1,.5,0}
\definecolor{Violet}{rgb}{.5,0,.5}
\definecolor{Purple}{rgb}{.75,0,.25}
\definecolor{Brown}{rgb}{.75,.5,.25}
\definecolor{Grey}{rgb}{.7,.7,.7}
\definecolor{Black}{rgb}{0,0,0}
\definecolor{dr}{rgb}{0.8,0,0}
\definecolor{db}{rgb}{0,0,0.8}
\def\restriction#1#2{\mathchoice
              {\setbox1\hbox{${\displaystyle #1}_{\scriptstyle #2}$}
              \restrictionaux{#1}{#2}}
              {\setbox1\hbox{${\textstyle #1}_{\scriptstyle #2}$}
              \restrictionaux{#1}{#2}}
              {\setbox1\hbox{${\scriptstyle #1}_{\scriptscriptstyle #2}$}
              \restrictionaux{#1}{#2}}
              {\setbox1\hbox{${\scriptscriptstyle #1}_{\scriptscriptstyle #2}$}
              \restrictionaux{#1}{#2}}}
\def\restrictionaux#1#2{{#1\,\smash{\vrule height .8\ht1 depth .85\dp1}}_{\,#2}}
\newcommand{\1}{\mathds{1}}
\newcommand{\dd}{\mathrm{d}}
\let\epsilon=\varepsilon
\let\Phi=\phi
\let\phi=\varphi
\let\ss=\smallskip
\let\ms=\medskip
\title{Dimers with layered disorder}
\begin{document}

\maketitle

\vspace{-2cm}

\noindent{\large \bf   Quentin Moulard, Fabio Toninelli}

\noindent{\small 
   TU Wien, Austria\\}
\noindent\email{
quentin.moulard@tuwien.ac.at,
fabio.toninelli@tuwien.ac.at}
\newline

\bigskip\noindent
\begin{abstract}
We study the dimer model on the square grid, with quenched random edge weights. Randomness is chosen to have a layered structure, similar to that of the celebrated McCoy-Wu disordered Ising model.
Disorder has a highly non-trivial effect and  it produces an essential singularity of the free energy, with $e^{-\sqrt{{\rm distance}}}$ decay of dimer-dimer correlations, at a point of the ``liquid'' (or ``massless'') phase where the homogeneous dimer model has instead a real analytic free energy and  correlations decaying like $1/({\rm distance})^2$. Moreover,
at a point where the homogeneous model has a transition between a massive (gaseous) and massless (liquid) phase, 
the critical exponent $3/2$ (Pokrovsky-Talapov law), characteristic of the transition between the two regimes, is modified by disorder into an exponent  that ranges continuously between $3/2$ and infinity.

\end{abstract}

\bigskip

\noindent{\it Key words and phrases.}
Dimer model; quenched disorder; disorder relevance; McCoy-Wu model; Lyapunov exponent; random matrix product.


\section{Introduction}
 Disorder can have a drastic,
non-perturbative effect on phase transitions: even an arbitrarily
small amount of quenched, random impurities can discontinuously change
critical exponents or remove phase transitions altogether, especially
in low-dimensional systems. Famous examples include: the 2D Random
Field Ising Model, where a random magnetic field with arbitrarily
small variance eliminates the ferromagnetic phase transition
\cite{ImryMa,AW}, and induces exponential decay of correlations
 \cite{Ding,Aizenman};  the 2D Ising model again, where random ferromagnetic
couplings conjecturally make the ferromagnetic transition 
smoother than in the homogeneous case \cite{DD,DD1}, turning the
$\log[1/(\beta-\beta_c)]$ divergence of the specific heat into a
$\log \log[1/(\beta-\beta_c)]$ one, and the spin-spin critical
exponent from $1/4$ to $0$; the pinning/wetting model, where disorder has a
smoothing effect and makes the localization transition always at least
second order \cite{giacomin2006smoothing}, in contrast with the
homogeneous case; the lattice Gaussian Free Field, where interaction with a quenched disordered substrate turns the wetting (localization) transition from first to second order in dimension $d\ge3$ \cite{HubertGB-GFF} and from (almost) first to infinite order in dimension $d=2$ \cite{HubertGFF};
the directed polymer in random
environment, where in low dimension ($d=1,2$) the polymer is
``localized'' for any arbitrarily small disorder strength
\cite{Comets} while in dimension $d\ge 3$ an infinite-order phase
transition between weak and strong disorder phases appears
\cite{lacoin2025localization}.  Despite these examples and certain
general guiding heuristic principles such as the ``Harris criterion''
\cite{harris}, the question of \emph{relevance} (in the
Renormalization Group sense) of quenched randomness on phase
transitions is still far from understood in general.

In this work, we address this question in the framework of the dimer
model \cite{Kasteleyn,mccoy1973two,Gorin} on the (periodized) square grid $\Z^2$. In contrast with the
Ising model, that in its homogeneous version has a single critical
point separating the uniqueness and phase transition region, the phase
diagram of the dimer model  exhibits a whole critical
region where correlations decay polynomially (like the inverse of the distance squared) and the height function
scales to the Gaussian Free Field (GFF) \cite{kenyon2001dominos}. In the dimer model
literature \cite{KOS}, this is called ``liquid'' or ``massless''
phase. The model exhibits also massive (or ``gaseous'') phases where
correlation decay is exponential, as well as “frozen” regions where
correlations are zero. The transition between liquid and either
gaseous or frozen regions is characterized by a critical exponent
$3/2$ (Pokrovsky-Talapov law \cite{PT,KO,Duse}). Our goal is to
understand how randomness affects this picture.

The type of
randomness we introduce is inspired by the McCoy-Wu 2D random Ising
model \cite{McCoyWu196,mccoy1973two}, where ferromagnetic spin-spin couplings are
random but constant along rows of $\Z^2$. Similarly, we randomize the
edge weights of the dimer model, in a layered way. For the McCoy-Wu
model it is expected that disorder turns the second-order phase
transition at $\beta_c$ into an infinite-order one (for recent
progress, supporting but not fully proving this conjecture, see
\cite{comets2019continuum,Quentin}. Some of the ideas we use in this work were
developed  by one of us in the latter reference).

For the dimer model, the effect of disorder is in a
sense even more surprising. Our first result (Theorem \ref{th:F} ) shows that an infinite-order phase transition appears at a point of the phase
diagram where the free energy of the homogeneous model is real
analytic. At this particular point, dimer-dimer correlations of the disordered model decay
stretched-exponentially fast (approximately like the exponential of minus the \emph{square root of the distance}) in the direction perpendicular to the
disorder layering (Theorem \ref{th:corr}). We emphasize that this
effect is present even for an arbitrarily small disorder strength, and
that it has no analog for the homogeneous model, where correlations
decay either polynomially or exponentially anywhere in the phase
diagram. It is interesting to mention that an analogous $\exp(-\sqrt{{\rm distance}})$ decay has been predicted in the physics literature \cite{Shankar} 
for energy-energy correlations of the McCoy-Wu model.
Secondly, the phase diagram of the disordered dimer model (that is parametrized by two ``magnetic fields'' $H_1,H_2$ that suitably fix the edge occupation densities) exhibits regions, that have no analog for the homogeneous model, where the free energy is affine with respect to just one of the two magnetic fields (Theorem \ref{th:Fbetane0}  and Remark \ref{rem:notaffine}).
Yet another striking finding is that, at a specific point of the phase diagram where the homogeneous model is at the 
boundary between liquid and gaseous phases, the critical exponent $3/2$
is modified into a continuously varying exponent, that ranges between
$3/2$ and $+\infty$ (Theorem \ref{th:congas}). On the other hand,
other features of the homogeneous dimer model, such as the existence of a
massless phase with correlations decaying like the inverse squared distance, and the $3/2$
exponent at the liquid/frozen boundary, turn out to be robust to the
presence of disorder (Theorems \ref{th:32} and \ref{th:corr}).

Technically, a first step of our analysis is to rewrite, via
Kasteleyn's theory, both free energy and correlations of the
disordered dimer model in terms  of a product of an i.i.d. sequence of $2\times 2$ complex random matrices
$\{M^\theta_y\}_{y\in\N}$, with $\theta\in[0,\pi/2]$ a Fourier (momentum) variable. In particular, the free energy can be expressed through an integral over $\theta$ involving  the top Lyapunov exponent
${\rm Lyap}(M^\theta)$. As realized earlier on by Derrida-Hilhorst \cite{DH} and Nieuwenhuizen-Luck \cite{Luck},
the top Lyapunov exponent has a very singular behavior in situations
where the matrices $M_y$ ``almost commute'', which is especially
relevant at the point of essential singularity and at the gas-liquid
transition point. Obtaining sharp bounds on ${\rm Lyap}(M)$
in such situations is a notoriously subtle issue, and we rely on
recent beautiful developments in this area, in particular
\cite{collin2024large,de2024scaling,collin2025lyapunov} and \cite{GGG,Havret}, the latter
making the Derrida-Hilhorst predictions rigorous. In addition, we prove in Section \ref{sec:Lyapunov}  several crucial monotonicity properties of the Lyapunov exponent, seen as a function defined on the complex half-plane. While results on the
free energy's regularity are based on fine analytic properties of
${\rm Lyap}(M)$, for the study of correlations we need arguments of different,
more probabilistic nature. 

While our analysis focuses on the model defined on the torus, we
expect the free energy singularities described by Theorems \ref{th:F}
and \ref{th:congas} to have a direct consequence on limit shapes in
domains with boundaries. In fact, it is known \cite{KO} that the free
energy as a function of the magnetic fields $(H_1,H_2)$ can be seen as
a limit shape for a volume-tilted version of the dimer model
\cite{Gorin}. For instance, we expect limit shapes to be
non-analytic around points where the slope is zero (which
corresponds to the point $(H_1,H_2)=(0,0)$ in the phase diagram, where
the essential singularity occurs). It would be very interesting to
verify this numerically.  Finally, our work raises a set of very interesting
problems, among which: the behavior of the model in the regions
$C_\pm,G_\gamma$ (see Fig. \ref{fig:phasediag},
\ref{fig:phasediag-gas}), that seems to be different from that of the
homogeneous system in any of its three phases; the scaling
limit of the height function of the disordered dimer model in the
liquid region; last but not least, the question whether the thermodynamics of the dimer model with bulk disorder (i.e., i.i.d., non-layered, random edge weigths) shows the same phenomenology as the model we study here.

{\bf Note} While this work was being completed, the work
\cite{dominosrandom} was posted on ArXiv. This also studies a
two-dimensional dimer model on the Aztec diamond with random
edge weights having a layered structure, though different from the one described in next section. In terms of both results
and techniques, the two works are quite disjoint.

\subsection{The model and the free energy}

Let $T_{L, N} \eqdef (\Z/2 L \Z) \times (\Z/2 N \Z)$ be the discrete
two-dimensional toroidal graph, for integers $L, N \geq 1$, equipped
with its natural nearest-neighbour graph structure. For convenience,
we assume that $L$ is even and $N$ is odd.  Vertices are assigned
coordinates $(x,y)$ with $1\le x\le 2L$ and $-N<y\le N$.  We color the
vertex $(x,y)$ of $T_{L, N}$ black if $x+y$ is odd and white else. We
let $V_B$ (resp. $V_W$) denote the set of black (resp. white)
vertices.  We weight the (unoriented) edges of $T_{L,N}$ in a layered
way, as follows.  Let $ {w}= (w_1,w_2)$ be a pair of positive random
variables and
${\underline{w}}=((w_1(y),w_2(y))_{y\in\mathbb Z}$ be an i.i.d. sample from ${w}$
(defined on a probability space $(\Omega, \cF, \mathbb P)$).

\begin{definition}
\label{def:w}
Let $H_1,H_2\in \mathbb R$. For an horizontal edge $e = \{(x,y),(x+1,y)\}$ let $w(e)=w_1(y)e^{-H_2}$ (resp. $w_1(y)e^{H_2}$) if $(x,y)$ is black (resp. white); for a vertical edge $e = \{(x,y),(x,y+1)\}$ let $w(e)=w_2(y)e^{H_1}$
(resp. $w_2(y)e^{-H_1}$) if $(x,y)$ is black (resp. white).

\end{definition}
The parameter $H_1$ plays the role of a ``magnetic field'' that fixes the average difference between the number of vertical edges with black or white bottom vertex or, equivalently, the average height change in the horizontal direction \cite{KOS}. Similarly,
  $H_2$  fixes the average difference between the number of horizontal edges with white or black left vertex, or equivalently the average height change in the vertical direction. We do not formally define the dimer model's height function \cite{Gorin}, since we will not use it in this work.

\begin{Assumption}
  \label{ass:disordine}
The distributions of $w_1,w_2$ are compactly supported on $(0,+\infty)$, $w_2$ is not deterministic and $w_1,w_2$ are independent. Up to multiplying all weights by a global positive constant, which does not change the measure and changes the free energy by an additive constant, we will assume that $\mathbb E(\log w_2)=0$.
\end{Assumption}

The compact support assumption   avoids  technical complications and could be relaxed to some extent, provided that the the logarithm of the weights has  finite moments of sufficiently high order.

The distribution  $P_{L,N}:=P_{L, N, \underline{w},H_1,H_2}$ of the disordered dimer model is the probability measure whose density with respect to the uniform measure on the set of $\Omega_{L,N}$ of perfect matchings $D$ of $T_{L,N}$ is proportional to $\prod_{e \in D} w(e)$. Let $Z_{L, N}$ be the  partition function
\begin{equ} \label{e:PartitionDef}
Z_{L, N}:=Z_{L, N, \underline{w},H_1,H_2} \eqdef \sum_{D\in\Omega_{L,N}} \prod_{e \in D} w(e) \, .
\end{equ}

Let us define the free energy per unit volume in the thermodynamic limit as
\begin{equ} \label{e:FreeEnergyLimitDef}
F\eqdef F(H_1,H_2)\eqdef \lim_{L \to \infty} \lim_{N\to\infty}\frac{1}{4 L N } \log Z_{L,N} \, .
\end{equ}
$F$ can be expressed via the top Lyapunov exponent of a product of i.i.d. $2\times 2$ random matrices:
\begin{proposition} \label{p:FreeEnergyFormula}
The limit \eqref{e:FreeEnergyLimitDef} exists $\mathbb P$-almost surely and in $\mathbb L^1$, and it is deterministic:
\begin{equ} \label{e:FreeEnergyFormula}
F = \frac{1}{\pi} \int_{0}^{\pi/2} \max\Big[\cL(\theta,H_2 ),|H_1|\Big] \, \dd \theta \, ,
\end{equ}
where,
for $\theta,H_2 \in \mathbb R$ and $w_1,w_2 > 0$, we define the $2 \times 2$ complex matrix
\begin{equ}
M^\theta_{H_2}\eqdef M_{H_2}^\theta(w_1,w_2) \eqdef
\begin{pmatrix} \label{e:Matrix}
2 w_1 \sin(\theta + \iota H_2) & w_2^2 \\
1 & 0
\end{pmatrix} \, ,
\end{equ}
and
$\cL(\theta,H_2)\ge0$ stands for the top Lyapunov exponent of the product of the i.i.d. matrices $\{M^\theta_{y,H_2}\}_{y\in \N} \eqdef \{M^\theta_{H_2}(w_1(y),w_2(y))\}_{y\in \N}$.
\end{proposition}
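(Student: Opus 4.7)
\emph{Proof plan.} The plan is to apply Kasteleyn's theorem on the torus, Fourier-decompose horizontally, identify the resulting transfer matrix with the $M^\theta_{H_2}$ of \eqref{e:Matrix}, and extract the $\max[\cL(\theta,H_2),|H_1|]$ structure by combining Furstenberg--Kesten with the analysis of ``winding'' contributions coming from the cyclic boundary conditions in the $y$-direction.

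First, Kasteleyn's theorem for bipartite graphs on the torus gives
$$
Z_{L,N}=\tfrac12\Bigl|\sum_{\alpha,\beta\in\{0,1\}}\epsilon_{\alpha,\beta}\det K^{\alpha,\beta}\Bigr|,
$$
where the four $2LN\times 2LN$ matrices $K^{\alpha,\beta}$ differ only by sign twists on edges crossing the two cycles of the torus, and $\epsilon_{\alpha,\beta}\in\{\pm1\}$ are fixed signs. The weights and the fields $H_1,H_2$ are encoded in the entries of $K^{\alpha,\beta}$ as in Definition~\ref{def:w}. Since the weights depend only on $y$, each $K^{\alpha,\beta}$ is translation-invariant in $x$ with period~$2$ (the bipartite unit cell); a Fourier transform in $x$ therefore block-diagonalizes $K^{\alpha,\beta}$ into $L$ scalar matrices $K^{\alpha,\beta,\theta}$ of size $2N\times 2N$, indexed by $\theta$ in an $L$-point set $\Theta_\alpha\subset[0,\pi)$. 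Each $K^{\alpha,\beta,\theta}$ is tridiagonal in $y$ with cyclic boundary conditions: diagonal entry $d_y=2w_1(y)\sin(\theta+\iota H_2)$ (from Fourier-transformed horizontal edges, with the $e^{\pm H_2}$ twist of Definition~\ref{def:w} and the Kasteleyn sign convention producing the $\sin$), and off-diagonals $u_y,\ell_y$ proportional to $w_2(y)e^{\pm H_1}$ (from vertical edges). The three-term recurrence for the open-boundary subdeterminants then has transfer matrix precisely $M^\theta_{y,H_2}$, since $u_y\ell_y=-w_2(y)^2$ is independent of $H_1$ (the $e^{+H_1}$ and $e^{-H_1}$ factors cancel in the product).

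Second, the cyclic structure contributes two ``winding'' terms to $\det K^{\alpha,\beta,\theta}$ on top of the open-boundary piece: $\pm\prod_y u_y$ and $\pm\prod_y\ell_y$, of size $e^{\pm 2NH_1}\prod_y w_2(y)$. By the strong law of large numbers and the assumption $\mathbb E(\log w_2)=0$, one has $\prod_y w_2(y)=e^{o(N)}$ $\mathbb P$-a.s., so the winding terms scale as $e^{2N|H_1|+o(N)}$. Meanwhile, Furstenberg--Kesten applied to the i.i.d.\ sequence $\{M^\theta_{y,H_2}\}$ (whose entries are uniformly bounded thanks to Assumption~\ref{ass:disordine}) yields $\frac{1}{2N}\log\|M^\theta_{2N,H_2}\cdots M^\theta_{1,H_2}\|\to\cL(\theta,H_2)$ a.s.\ and in $\mathbb L^1$, so the open-boundary piece of $\det K^{\alpha,\beta,\theta}$ is of size $e^{2N\cL(\theta,H_2)+o(N)}$. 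Non-negativity $\cL(\theta,H_2)\ge 0$ follows from $\mathbb E\log|\det M^\theta_{y,H_2}|=2\,\mathbb E(\log w_2)=0$, which forces the two Lyapunov exponents of the product to be $\pm\cL$. Combining both contributions and noting that cancellations between the four $(\alpha,\beta)$ Kasteleyn sectors are subexponential (each winding contribution survives in at least one of the four linear combinations),
$$
\tfrac{1}{2N}\log|\det K^{\alpha,\beta,\theta}|\xrightarrow[N\to\infty]{\text{a.s., }\mathbb L^1}\max\bigl[\cL(\theta,H_2),|H_1|\bigr].
$$
Summing over $\theta\in\Theta_\alpha$ (an $L$-point set) and letting first $N\to\infty$ and then $L\to\infty$, the Riemann sum converges to $\tfrac{1}{2\pi}\int_0^\pi\max[\cL(\theta,H_2),|H_1|]\,\dd\theta=\tfrac{1}{\pi}\int_0^{\pi/2}\max[\cL(\theta,H_2),|H_1|]\,\dd\theta$, the last equality using the symmetry $\cL(\pi-\theta,H_2)=\cL(\theta,H_2)$ (which follows from the entrywise relation $M^{\pi-\theta}_{H_2}=\overline{M^\theta_{H_2}}$ together with invariance of Lyapunov exponents under complex conjugation). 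Dominated convergence, based on the uniform bound on $\cL(\theta,H_2)$ provided by the compact-support assumption on $w$, upgrades a.s.\ convergence to $\mathbb L^1$, and the limit is deterministic because $\cL$ is.

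The \emph{main obstacle} is the algebraic bookkeeping that decomposes $\det K^{\alpha,\beta,\theta}$ as the sum of an open-boundary transfer-matrix determinant plus the two winding contributions, together with the verification that the Kasteleyn sign structure across the four $(\alpha,\beta)$ sectors produces the absolute value $|H_1|$ (and not just $\pm H_1$) in the max: a careful check of the Pfaffian signs on the $2$-torus shows that each of the two winding contributions $\prod u_y$ and $\prod\ell_y$ appears with a non-vanishing coefficient in at least one of the four linear combinations, so the dominant rate is governed by the \emph{larger} of $e^{2NH_1}$ and $e^{-2NH_1}$, namely $e^{2N|H_1|}$. The uniform-in-$\theta$ continuity needed for the Riemann sum passage follows from standard continuity results for Lyapunov exponents, developed in Section~\ref{sec:Lyapunov}.
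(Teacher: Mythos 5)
Your route is the paper's: Kasteleyn on the torus, Fourier in $x$, a $y$-transfer matrix, winding terms, Lyapunov exponent, Riemann sum. But there are two genuine gaps. First, you deduce the growth rate of the open-boundary (trace) part of $\det K^{\alpha,\beta,\theta}$ directly from Furstenberg--Kesten, which only controls $\log\|M^\theta_{2N}\cdots M^\theta_1\|$. Since $M^\theta_{H_2}$ has \emph{complex} entries for $H_2\ne 0$, there is no a priori reason for $|{\rm Tr}(\cdot)|$ or the $(1,1)$ entry to share the exponential rate of the norm; cancellations could make it decay faster. The paper closes this via Lemma~\ref{lemma:trace} (trace comparable to $\|\cdot\|_1$ for matrices in $\cM$ whose ranges are confined in $B_{d_\H}(1,R)$) together with Lemma~\ref{lem:alternative}, and this is precisely where the Poincar\'e half-plane machinery of Section~2 is needed. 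Your proposal uses none of this.

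Second, the passage from the decomposition $\det K^{\alpha,\beta,\theta}= {\rm Tr}(\dots) + (-1)^{\tau_2}2\cosh(2NH_1)\prod w_2(y)$ to $\tfrac{1}{2N}\log|\det K^{\alpha,\beta,\theta}|\to\max(\cL(\theta,H_2),|H_1|)$ is where you go wrong conceptually. You worry about cancellations ``between the four Kasteleyn sectors,'' but the relevant concern is a cancellation \emph{within a single sector} between the trace piece and the winding piece when their exponential rates coincide, i.e.\ when $H_1={\rm Lyap}$ at one of the finitely many Fourier modes $\theta=k\pi/L$. (For $H_2\ne 0$ the trace is complex so one cannot even appeal to sign-definiteness.) The paper handles this cleanly by observing that $\tfrac{1}{2N}\log Z_{L,N}$ is convex in $H_1$, so it suffices to prove the limit on the dense set $\{H_1:H_1\neq{\rm Lyap}(R^{k\pi/L}_{H_2})\ \forall k\}$, where the two rates are distinct and the larger term strictly dominates. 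Your argument has no mechanism to rule out this resonance; without it the stated limit for $\det K^{\alpha,\beta,\theta}$ is unjustified. (A minor bookkeeping point: with the paper's Fourier basis the diagonal entry is $2w_1(y)\cos(\theta+\iota H_2)$, not $\sin$, so the transfer matrix is $M^{\pi/2-\theta}_{-H_2,y}$; this is only a change of variable, but you should verify rather than assert that your conventions land you on $M^\theta_{H_2}$.)
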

The definition of top  Lyapunov exponent of a product of i.i.d. random matrices is recalled in Section \ref{sec:Lyapunov} below. We refer also to the monographs \cite{Bougerol,Viana}.
\ss

For later convenience, we define also, given $H_2\in\R$,
\begin{eqnarray}
  \label{eq:bstar}
  \Delta (H_2) := \cL(0,H_2) \, , \quad H_c(H_2) :=\cL\Big(\frac\pi2,H_2\Big) \, ,
\end{eqnarray}
and whenever $H_2$ is omitted in the notations $F(H_1, H_2)$,
$\cL(\theta, H_2)$, $M_{H_2}^\theta$ or $H_c(H_2)$, it means that
$H_2$ is chosen to be zero. Note that for $H_2 = 0$, $\Delta(0) = 0$
and the entries of the matrix $M^\theta$ are real and non-negative:
\begin{equ}
  \label{e:Matrixnew}
M^\theta = \begin{pmatrix} 
2 w_1 \sin(\theta) & w_2^2 \\
1 & 0
\end{pmatrix} \, .
\end{equ}

\begin{remark}
  In \eqref{e:FreeEnergyLimitDef} we take the limit $N\to\infty$
  before $L\to\infty$ for technical simplicity, but with some extra
  work one could take the limit in any order and the result would be the
  same.
  On the other hand, it is crucial that the model is defined on
  the torus. In fact, if $T_{L,N}$ were replaced by the cylinder
  $(\mathbb Z/2L\mathbb Z)\times \{-N+1,\dots,N\}$, the result would be
  very different and less interesting: in this case, the infinite-volume free energy is
\begin{equ} 
F = \frac{1}{\pi} \int_{0}^{\pi/2} \cL(\theta,H_2) \, \dd \theta \, ,
\end{equ}
  and in particular it is independent of $H_1$. The reason is that, for every perfect matching of the cylinder, each row contains exactly as many vertical edges with black vertex on top as vertical edges with white vertex on top, so that $H_1$ cancels out in the weight of configurations.
\end{remark}

\begin{remark}
  \label{rem:restrict}
 $F$ is a function of $H_1,H_2$ and of the law of $w_1,w_2$, but we think of  the latter as fixed and we write $F(H_1,H_2)$.
 $F$ is even-symmetric in $H_1$ and in $H_2$ (which can  be seen directly by translating the model by $(1,0)$ or $(0,1)$). Therefore, we will restrict our attention to $H_1,H_2\ge0$.
\end{remark}

   In absence of disorder (that is $w_1>0$ deterministic and $w_2=1$) the Lyapunov exponent  and the free energy (that we denote $\mathbf F$ to distinguish it from $F$) can be computed easily. In fact,  one can more simply write $\mathbf F$ as a double integral on the torus $\{z,w\in \C:|z|=|w|=1\}$ \cite[Th. 3.5]{KOS}. The qualitative outcome of the  computation is summarized in Figure \ref{fig:phasediag} (left) and its caption.
 In particular, for $H_2=0$, an explicit  computation (Appendix \ref{app:pure}) gives
   \begin{equ}
     \label{32pure}
{     \mathbf F}(H_1)-\frac{H_1   }2
=     \begin{cases}
         r(w_1)(\mathbf H_{c}-H_1)^{3/2}(1+o(1)) & \mathrm{if }\;       H_1\uparrow \mathbf H_{c}\\
       0& \mathrm{if }\;       H_1\ge \mathbf H_{c}
     \end{cases}
   \end{equ}
  where $r(w_1),\mathbf H_{c}>0$ are  explicit functions of $w_1$, while for $H_1\in(-\mathbf H_{c},\mathbf H_{c})$ the function ${     \mathbf F}(H_1)$ is analytic and is quadratic for $H_1\sim0$. The regions $(-\mathbf H_{c},\mathbf H_{c})$ and $\{H_1:|H_1|\ge \mathbf H_{c}\}$ are the intersections of the ``liquid'' and ``frozen'' regions (see Fig. \ref{fig:phasediag} (left)) with the axis $H_2=0$.
The power-law singularity with exponent $3/2$  takes the name of Pokrovsky-Talapov law \cite{PT}, and the exponent is the same at any point of the liquid/frozen boundary.

\subsection{Infinite-order phase transition induced by disorder}

As we shall see later (beginning of Section \ref{s:proofs}), the function $\theta \in [0,\pi/2] \to \cL(\theta, H_2)$ is non-negative, strictly increasing, continuous, and it is real analytic on $(0,\pi/2]$. In particular, the inverse $\cL^{-1}(\cdot, H_2)$ is well-defined and we have
\begin{equ}
  \label{formulaF0}
     F(H_1, H_2) =
     \begin{cases}
       F(0, H_2) & \text{ if } 0 \leq H_1 \leq \Delta(H_2) \\
       F(0, H_2)+\frac1\pi\int_{\Delta(H_2)}^{H_1}
       \cL^{-1}(y,H_2) \, dy & \text{ if } \Delta(H_2) \leq H_1 \leq H_c(H_2) \\
       \frac{H_1}{2} & \text{ if } H_1 \ge H_c(H_2)
     \end{cases} \, .
   \end{equ}
Note that $F(\cdot, H_2)$ is strictly convex and real analytic on $(\Delta(H_2), H_c(H_2))$.

\begin{figure}[h]
    \centering
    \begin{minipage}{0.45\textwidth}
        \centering
        \includegraphics[width=0.9\textwidth]{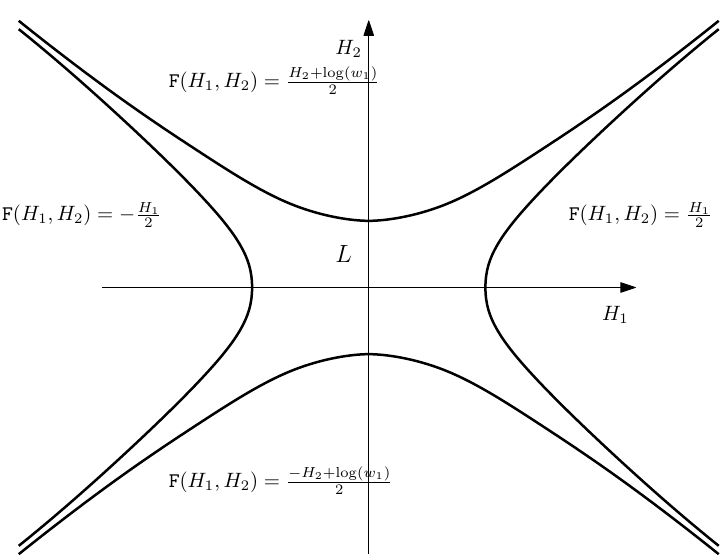} 
    \end{minipage}\hfill
    \begin{minipage}{0.45\textwidth}
        \centering        \includegraphics[scale=0.66]{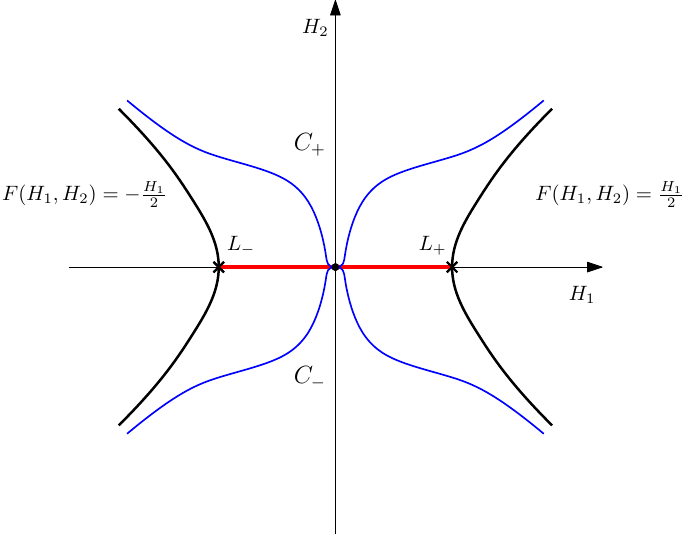} 
    \end{minipage}
    \caption{Left: The phase diagram of the non-disordered dimer
      model. The free energy ${\mathbf F}$ is a strictly convex and
      real analytic function of $(H_1,H_2)$ in the ``liquid'' (or
      ``massless'') region $L$, and it is affine in the four
      ``frozen'' (or ``solid'') unbounded convex regions. In each
      solid region, in the thermodynamic limit, the system is frozen
      in a single periodic configuration where only one of the four
      types of edges is occupied by a dimer.  Right: A sketchy picture
      of the phase diagram of the disordered dimer model. At
      $(H_1,H_2)=(0,0)$ (thick dot) the free energy $F$ has an
      infinite-order phase transition (Theorem \ref{th:F}) and
      correlations decay as a stretched exponential (Theorem
      \ref{th:corr}).
On the two open red segments
      (Theorem \ref{th:corr}) and, conjecturally, in the whole liquid
      regions $L_\pm$, correlations decay
      like the inverse distance squared.
In the two unbounded regions $C_\pm$
      above/below the blue curves, $F$ is constant in the horizontal
      direction (Theorem \ref{th:Fbetane0}). The regions
      $C_\pm$ are in general \emph{not convex} (Remark \ref{rem:notaffine}). 
      For $|H_1|$ large enough  the model is in a solid
      phase where $F(H_1,H_2)=|H_1|/2$     (formula \eqref{e:FreeEnergyFormula}) and correlations are exactly zero (Remark \ref{rem:frozen}).
      The crosses denote the
      points $(\pm H_c,0)$ on the liquid/frozen transition lines. The free energy critical exponent is $3/2$ on the whole liquid/frozen transition lines (thick black lines), see Theorem \ref{th:32}.
      Possibly, the model is
      frozen also deep inside the regions $C_\pm$.  }
    \label{fig:phasediag}
\end{figure}

Our first main results (Theorems \ref{th:F} and \ref{th:Fbetane0}) show that the free energy of the disordered dimer model has an essential singularity at $H_1=H_2=0$, which has no analog for the pure dimer model. On the other hand, the critical exponent of the liquid/solid transition  is unchanged by the presence of randomness (Theorem \ref{th:32}).

\begin{theorem} \label{th:F} Under Assumption \ref{ass:disordine} and for $H_2 = 0$, it holds as $H_1 \searrow 0$,
\begin{eqnarray}
    \label{eq:Fessential}
    F(H_1)-F(0)=\exp\left(-(1+o(1)) \, \frac{{\rm Var}(\log w_2)}{H_1}\right).
\end{eqnarray}
\end{theorem}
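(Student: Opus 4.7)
The starting point is the explicit formula~\eqref{formulaF0}. Since $H_2 = 0$ gives $\Delta(0) = 0$, for $0 < H_1 \le H_c$ one has
\[
F(H_1) - F(0) \;=\; \frac{1}{\pi} \int_0^{H_1} \cL^{-1}(y, 0)\, dy,
\]
so the task reduces to a sharp asymptotic for $\cL^{-1}(y, 0)$ as $y \searrow 0$. The key input I will need on the Lyapunov side is
\[
\cL(\theta, 0) \;=\; \frac{\mathrm{Var}(\log w_2)}{|\log \theta|}\bigl(1 + o(1)\bigr) \quad \text{as } \theta \searrow 0,
\]
with the correct leading constant. Once this is in hand, inversion gives $\cL^{-1}(y, 0) = \exp\!\bigl(-(1+o(1))c/y\bigr)$ with $c = \mathrm{Var}(\log w_2)$; the change of variables $u = c/y$ turns $\int_0^{H_1} e^{-(1+o(1))c/y}\,dy$ into $c \int_{c/H_1}^\infty e^{-(1+o(1))u}\,u^{-2}\, du$, whose leading behaviour is $e^{-c/H_1}\,H_1^2/c$. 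Taking logarithms, the polynomial prefactor contributes only $O(\log(1/H_1)) = o(1/H_1)$, so $\log(F(H_1) - F(0)) = -(1+o(1))c/H_1$, matching \eqref{eq:Fessential}.

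\textbf{The Lyapunov asymptotic.}
The heart of the proof is therefore the displayed asymptotic on $\cL(\theta, 0)$. Its origin lies in a striking degeneracy of the unperturbed matrix
\[
M^0 \;=\; \begin{pmatrix} 0 & w_2^2 \\ 1 & 0 \end{pmatrix},
\]
namely $M^0(y_2)\, M^0(y_1) = \mathrm{diag}(w_2(y_2)^2,\, w_2(y_1)^2)$. Along even times, the unperturbed product is thus a product of i.i.d.\ diagonal matrices whose logarithms execute a centred random walk (by the normalisation $\mathbb{E}[\log w_2] = 0$), and in particular $\cL(0, 0) = 0$. The perturbation $M^\theta - M^0$ equals $2 w_1 \sin\theta$ times the elementary matrix $e_{11}$, i.e.\ an off-diagonal mixing of size $O(\theta)$ superimposed on a recurrent driving walk. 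This is precisely the ``almost commuting'' critical regime analysed heuristically by Derrida--Hilhorst~\cite{DH} and Nieuwenhuizen--Luck~\cite{Luck} and made rigorous in~\cite{Havret, GGG} (with related developments in \cite{collin2024large, de2024scaling, collin2025lyapunov}): off-diagonal mixing becomes effective only after the walk completes an excursion of height $\sim \log(1/\theta)$, which by the CLT takes time $n \sim |\log \theta|^2/\mathrm{Var}(\log w_2)$, yielding a Lyapunov exponent of order $1/n$ with leading constant equal to the step variance $\mathrm{Var}(\log w_2)$.

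\textbf{Execution and main obstacle.}
Concretely, the route is to pass to the induced dynamics on $\mathbb{P}^1$: writing $z = x/y$, the map associated to $M^\theta$ reads
\[
z \;\longmapsto\; 2 w_1 \sin\theta + \frac{w_2^2}{z},
\]
and the Furstenberg formula expresses $\cL(\theta, 0)$ as the integral of $\log|z'/z|$ against the corresponding invariant probability measure $\mu_\theta$. In the logarithmic variable $u = \log|z|$, the unperturbed iteration becomes a centred random walk reflected around levels $\pm \log(1/\theta)$, and the displayed asymptotic amounts to showing that (to leading order) $\mu_\theta$ pushes forward to an approximately uniform distribution on an interval of length $\sim \log(1/\theta)$, so that the per-step expected log-growth scales as variance divided by $|\log\theta|$. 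The main obstacle is to verify that $M^\theta$, after a change of basis adapted to the random, $y$-dependent eigenvectors of $M^0$, meets the technical hypotheses of~\cite{Havret, GGG}, and to track the scalar normalisations carefully enough to recover the exact constant $\mathrm{Var}(\log w_2)$ rather than just the correct power $|\log \theta|^{-1}$; matching the constant on both upper and lower bounds is the genuinely delicate step.
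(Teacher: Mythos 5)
Your reduction from the theorem to the Lyapunov asymptotic is exactly the paper's route: with $\Delta(0)=0$, formula~\eqref{formulaF0} gives $F(H_1)-F(0)=\frac1\pi\int_0^{H_1}\cL^{-1}(y,0)\,dy$, and plugging in $\cL(\theta,0)\sim\mathrm{Var}(\log w_2)/|\log\theta|$ and doing the integral gives~\eqref{eq:Fessential}. That part is fine.

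The gap is in the Lyapunov asymptotic itself, which you correctly flag as the heart of the proof but do not establish, and where your sketched route is misdirected in two ways. First, the references: you invoke~\cite{Havret,GGG}, but those treat the Derrida--Hilhorst regime with $\E[\log Z]<0$ (giving power laws $\eps^{2\alpha}$), which is what the paper uses for Theorem~\ref{th:congas}. Here the relevant random variable $Z$ has $\E[\log Z]=0$, and the correct input is~\cite{collin2024large} (or~\cite[Th.~3]{de2024scaling}), which gives the $\mathrm{Var}(\log Z)/(4\log(1/\eps))$ asymptotic~\eqref{e:LyapCriticalBehavior}. This is not a cosmetic distinction: the two regimes have qualitatively different leading behaviour. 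Second, the reduction: rather than a change of basis adapted to random, $y$-dependent eigenvectors of $M^0$, the paper simply pairs consecutive matrices and computes $\M_{2i}^\eps \M_{2i+1}^\eps$ explicitly, then sandwiches it entry-wise (exploiting non-negativity of all entries) between scalar multiples of $\begin{pmatrix}1 & C^{\pm1}\eps\\ C^{\pm1}\eps Z & Z\end{pmatrix}$ with $Z=w_2(1)^2/w_2(0)^2$; the constant then comes out as $\frac12\cdot\frac{8\,\mathrm{Var}(\log w_2)}{4|\log\eps|}=\mathrm{Var}(\log w_2)/|\log\eps|$ on both sides, so there is no delicate constant-matching step left over. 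Your phenomenological picture (recurrent driving walk, mixing effective only after an excursion of height $\sim\log(1/\theta)$) is accurate, but as written the proposal identifies the obstacle without removing it, and points toward the wrong black-box theorem.
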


\begin{remark}
Under the additional assumption that the law of $\log w_2$ admits a density with respect to Lebesgue's measure, \eqref{eq:Fessential} can be strengthened to
   \begin{eqnarray}
     \label{eq:Fessential2}
     F(H_1)-F(0)\stackrel{H_1\searrow 0}\asymp H_1 \exp\left(-\frac{{\rm Var}(\log w_2)}{H_1}\right),
   \end{eqnarray}
i.e., that the ratio of right- and left-hand sides is bounded away from zero and infinity as $H_1\searrow 0$.
\end{remark}

It may look surprising that disorder produces a singularity at a point where the free energy of the homogeneous model is analytic. However, see Remark \ref{rem:incontext} below.

In order to better understand  the essential singularity at $H_1=0$, it is useful to view $F$ as a function of both magnetic fields, hence to consider the case $H_2 \ne 0$.  Disorder produces another phenomenon that has no analog for the pure dimer model: in the $(H_1,H_2)$ plane there are two regions, that are tangent exactly at the essential singularity point, where the free energy is constant in $H_1$. Indeed, contrary to the pure dimer model, here $\Delta(H_2) > 0$ as soon as $H_2 \ne 0$.

\begin{theorem} \label{th:Fbetane0}
$\Delta(H_2)$ is the top Lyapunov exponent of the product of i.i.d. random matrices $V_y^{2\sinh(H_2)} = V^{2\sinh(H_2)}(w_1(y), w_2(y))$ where for $x \in \R$ we denote
\begin{eqnarray}
\label{eq:D2V}
V^x =V^x(w_1,w_2)= \begin{pmatrix}
x \,{w_1/w_2} & -w_2\\
{1/w_2} & 0
\end{pmatrix}.
\end{eqnarray}
Under Assumption \ref{ass:disordine}, $\Delta(H_2) > 0$ as soon as $H_2\ne0$ and
\begin{equation} \label{eq:betacgamma}
\Delta(H_2)\stackrel {H_2\searrow 0}\sim \frac{{\rm Var}(\log w_2)}{|\log H_2|} \, .
\end{equation}

\end{theorem}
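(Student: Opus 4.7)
The plan is to first establish the matrix identification by a direct conjugation, then use Furstenberg's criterion for positivity, and finally apply the rigorous Derrida--Hilhorst asymptotic from \cite{Havret, GGG, collin2025lyapunov} to extract the announced $|\log H_2|^{-1}$ behaviour. Using $\sin(\iota H_2) = \iota \sinh H_2$ and setting $\sigma = \mathrm{diag}(1,\iota)$, a one-line computation gives
\begin{equation*}
\sigma \, M^0_{H_2}(w_1,w_2) \, \sigma^{-1} \;=\; \iota \, w_2 \, V^{2\sinh(H_2)}(w_1,w_2).
\end{equation*}
Conjugation by the deterministic matrix $\sigma$ leaves the top Lyapunov exponent unchanged, the modulus-$1$ constant $\iota$ contributes $0$, and multiplying the i.i.d.\ sequence by the i.i.d.\ scalars $w_2(y)$ shifts the Lyapunov exponent by $\E[\log w_2] = 0$ under Assumption~\ref{ass:disordine}. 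Thus $\Delta(H_2) = \cL(0,H_2)$ coincides with the top Lyapunov exponent of the sequence $V^{2\sinh(H_2)}_y$, as announced.

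For positivity when $H_2 \ne 0$, I would apply Furstenberg's theorem. Note $\det V^x = 1$, so $V^x_y \in \mathrm{SL}_2(\R)$. The eigenvectors of $V^x(w_1,w_2)$ are proportional to $(\lambda w_2,\, 1)$ with $\lambda$ a root of $\lambda^2 - (x w_1/w_2)\lambda + 1 = 0$, so they vary non-trivially with $w_2$; since $w_2$ is non-deterministic, no finite union of lines is jointly invariant, and an analogous argument rules out conjugacy into a compact subgroup. Furstenberg's positivity criterion (see \cite{Bougerol}) then yields $\Delta(H_2) = \mathrm{Lyap}(V^x) > 0$.

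For the asymptotic as $H_2 \searrow 0$, the key move is to pair consecutive matrices. Set $P_n := V^x_{2n+1} V^x_{2n}$, an i.i.d.\ sequence whose Lyapunov exponent is $2\,\mathrm{Lyap}(V^x)$. Direct multiplication yields
\begin{equation*}
P_n \;=\; -\begin{pmatrix} e^{\omega_n} & 0 \\ 0 & e^{-\omega_n} \end{pmatrix} + x \begin{pmatrix} 0 & -w_1(2n+1)\,w_2(2n)/w_2(2n+1) \\ w_1(2n)/(w_2(2n+1)w_2(2n)) & 0 \end{pmatrix} + O(x^2),
\end{equation*}
where $\omega_n := \log w_2(2n+1) - \log w_2(2n)$ is i.i.d.\ with $\E[\omega_n] = 0$ and $\mathrm{Var}(\omega_n) = 2\,\mathrm{Var}(\log w_2)$. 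This is exactly the Derrida--Hilhorst regime: a random diagonal matrix with balanced log-exponents of positive variance, perturbed by a coupling of size $x$ whose off-diagonal entries are strictly positive (thanks to $w_1 > 0$). Invoking the rigorous form of the Derrida--Hilhorst asymptotic gives $\mathrm{Lyap}(P_n) \sim \mathrm{Var}(\omega_n)/|\log x|$ as $x \searrow 0$; dividing by $2$ and using $|\log(2\sinh H_2)| \sim |\log H_2|$ yields $\Delta(H_2) \sim \mathrm{Var}(\log w_2)/|\log H_2|$.

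The main obstacle is this last step: matching our paired products $P_n$ to the precise hypotheses of the cited Derrida--Hilhorst theorems. Non-degeneracy of the coupling is automatic from $w_1 > 0$, and the mean-zero plus finite-variance condition on $\omega_n$ is immediate; the compactness of the support of $w_2$ should give any remaining regularity of the distribution of $\omega_n$ (tail or H\"older-type condition) required by the exact statement one applies. Properly dealing with the overall minus sign in the diagonal part of $P_n$ at $x = 0$ may require passing to quadrupled rather than doubled products, but this does not affect the leading asymptotic.
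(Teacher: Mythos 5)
Your matrix identification and the Furstenberg positivity argument are correct and match the paper's route (which in fact reuses the proof of Lemma~\ref{lem:GeneralLyap}). The pairing idea $P_n = V^x_{2n+1}V^x_{2n}$ is also the same as the paper's (which defines $W^\eps$ as minus such a product). But the final asymptotic step has a concrete gap, in two places.

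First, your statement that the off-diagonal coupling ``has strictly positive entries'' contradicts your own formula: in
\begin{equation*}
x\begin{pmatrix} 0 & -w_1(2n+1)\,w_2(2n)/w_2(2n+1) \\ w_1(2n)/(w_2(2n+1)w_2(2n)) & 0 \end{pmatrix}
\end{equation*}
the two off-diagonal entries have \emph{opposite} signs. This is not cosmetic. A positive-entry coupling gives a matrix of the form $\begin{pmatrix} 1 & \eps \\ \eps Z & Z\end{pmatrix}$ (a symmetric perturbation of a diagonal), while here the perturbation has both a symmetric and an antisymmetric (rotation-like) component, and the antisymmetric one actually dominates. The paper handles this carefully: it writes $W^\eps = \big[I - \eps a\,A_{\mathrm{anti}} - \eps b\,A_{\mathrm{sym}} - O(\eps^2)\big]\,\mathrm{diag}(\kappa,1/\kappa)$ with $a = \tfrac12(w_1 + w_1'/w_2^2)$, $b = \tfrac12(-w_1 + w_1'/w_2^2)$, notes $a > |b|$, and only then invokes a Lyapunov asymptotic that covers this mixed structure.

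Second, the references you cite do not cover the regime you are in. The rigorous Derrida--Hilhorst results in \cite{GGG,Havret} concern the off-critical case $\E[\log Z] \ne 0$, where the small-coupling asymptotic is a power law $\eps^{2\alpha}$ (that is what the paper uses for Theorem~\ref{th:congas}). Here $\E[\log Z] = \E[\log(w_2'/w_2)^2] = 0$ and the behaviour is logarithmic; the paper obtains \eqref{eq:betacgamma} from \cite[Th.~1]{de2024scaling} (with \cite{collin2024large,collin2025lyapunov} relevant for the real-axis and sharper versions), precisely because that result accommodates the centered diagonal and the symmetric-plus-antisymmetric perturbation. As written, your argument conflates two different theorems in two different regimes, and it does not verify the sign structure of the perturbation that the correct theorem actually requires. (The global minus sign of $P_n$ is, by contrast, harmless: one can simply take $-P_n$ without changing the Lyapunov exponent, as the paper does; no quadrupling is needed.)
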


\begin{remark}
  \label{rem:notaffine}
In contrast with frozen/gaseous regions of the homogeneous model, that are always convex, the region $C_+\eqdef \{(H_1,H_2):H_1\in(-\Delta(H_2),\Delta(H_2)),H_2>0\}$ is not convex, at least if the variance of the disorder is small enough. Indeed, since the free energy $F$ tends to $\bf F$ as  ${\rm Var}(\log w_2)$ tends to zero, the  boundary of $C_+$ tends to the upper liquid/frozen boundary in
      Fig. \ref{fig:phasediag} (left), plus a vertical segment connecting it to
      $(0,0)$. As a consequence, at least for ${\rm Var}(\log w_2)$ small enough,  the convex function $F$ cannot be affine in both variables in the whole region $C_+$. In this sense, the nature of this region is qualitatively different from that of   frozen or gaseous regions of the homogeneous model.
\end{remark}

The last result in this section addresses the free energy's behavior at the liquid/frozen boundary, i.e. for $H_1\to H_c(H_2)$, see definition \eqref{eq:bstar}. The result is that disorder does not modify the Pokrovsky-Talapov exponent $3/2$ of the liquid/frozen transition:
\begin{theorem}
  \label{th:32}
  For any $H_2\in\R $ there exists    $r=r(H_2)\in(0,\infty)$ such that
  \begin{eqnarray}
    \label{32dis}
  F(H_1,H_2)-\frac{H_1}2\stackrel{H_1 \nearrow H_c(H_2)} =
       r\,(H_c(H_2)-H_1)^{3/2} \, (1+o(1)) \, .
  \end{eqnarray}
\end{theorem}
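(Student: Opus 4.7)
The plan is to reduce the problem to a local expansion of $\theta\mapsto \cL(\theta,H_2)$ near its maximum $\theta=\pi/2$, via formula \eqref{formulaF0}. Using continuity of $F$ at $H_1=H_c(H_2)$ (which forces $F(H_c(H_2),H_2)=H_c(H_2)/2$) together with the identity $\partial_{H_1}F(y,H_2)=\cL^{-1}(y,H_2)/\pi$ on $(\Delta(H_2),H_c(H_2))$, one gets
\begin{equ}\label{eq:key-identity-32}
F(H_1,H_2)-\frac{H_1}{2}=\frac{1}{\pi}\int_{H_1}^{H_c(H_2)}\Big(\frac{\pi}{2}-\cL^{-1}(y,H_2)\Big)\,\dd y,
\end{equ}
for all $H_1\in[\Delta(H_2),H_c(H_2)]$. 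The task thus reduces to showing that $\pi/2-\cL^{-1}(y,H_2)\sim c_0(H_2)\,(H_c(H_2)-y)^{1/2}$ as $y\nearrow H_c(H_2)$, for some $c_0(H_2)\in(0,\infty)$.

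Next, I would exploit a discrete symmetry of the cocycle. Since $\sin(\pi-\theta+\iota H_2)=\overline{\sin(\theta+\iota H_2)}$, the matrix satisfies $M_{H_2}^{\pi-\theta}=\overline{M_{H_2}^\theta}$; the top Lyapunov exponent being invariant under entrywise complex conjugation, $\cL(\pi-\theta,H_2)=\cL(\theta,H_2)$. Combined with the real analyticity of $\cL(\cdot,H_2)$ at $\pi/2$ (stated at the beginning of Section \ref{s:proofs}), this forces $\partial_\theta \cL(\pi/2,H_2)=0$ together with a Taylor expansion in even powers,
\begin{equ}\label{eq:cL-expansion}
\cL(\theta,H_2)=H_c(H_2)-c(H_2)\Big(\frac{\pi}{2}-\theta\Big)^2+O\!\Big(\Big(\frac{\pi}{2}-\theta\Big)^4\Big),
\end{equ}
with $c(H_2)\ge 0$, since $\pi/2$ is a maximum of $\cL(\cdot,H_2)$ on $[0,\pi]$.

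The main obstacle is proving the strict positivity $c(H_2)>0$, that is, that $H_c(H_2)-\cL(\theta,H_2)$ vanishes to order exactly two at $\pi/2$ and not to higher even order. The most direct route is Furstenberg--Le~Page-type perturbation theory for the cocycle $(M_{H_2}^\theta)$: the coefficient $c(H_2)$ admits an explicit expression as an expectation against the invariant measure on projective space of the unperturbed cocycle, and a variance-type argument leveraging Assumption \ref{ass:disordine} (in particular the non-degeneracy of $w_2$, which ensures irreducibility and proximality) forces this expression to be strictly positive. An alternative that I expect to be cleaner is to invoke the monotonicity and subharmonicity properties of the complexified Lyapunov exponent established in Section \ref{sec:Lyapunov}: extending $\cL$ to the holomorphic family $z\mapsto M_{H_2}^z$, strict subharmonicity of $\cL$ in $z$ near $z=\pi/2$, combined with the reflection symmetry above, rules out a zero of order $\ge 4$ for $H_c(H_2)-\cL(\cdot,H_2)$ at $\pi/2$.

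Once $c(H_2)>0$ is known, inverting \eqref{eq:cL-expansion} yields
\begin{equ}
\frac{\pi}{2}-\cL^{-1}(y,H_2)=\frac{(H_c(H_2)-y)^{1/2}}{\sqrt{c(H_2)}}\,(1+o(1))\qquad \text{as }y\nearrow H_c(H_2).
\end{equ}
Substituting into \eqref{eq:key-identity-32} and computing the elementary integral of the resulting $1/2$-power produces \eqref{32dis}, with explicit prefactor $r(H_2)=2/(3\pi\sqrt{c(H_2)})\in(0,\infty)$.
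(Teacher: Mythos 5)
Your reduction via \eqref{formulaF0} to the integral
\begin{equ}
F(H_1,H_2)-\frac{H_1}{2}=\frac{1}{\pi}\int_{H_1}^{H_c(H_2)}\Big(\frac{\pi}{2}-\cL^{-1}(y,H_2)\Big)\,\dd y
\end{equ}
is exactly \eqref{eq:strpart} in the paper, the observation that the reflection symmetry $M_{H_2}^{\pi-\theta}=\overline{M_{H_2}^\theta}$ forces an even-power expansion at $\theta=\pi/2$ is correct, and you rightly pinpoint the crux: showing that the quadratic coefficient $c(H_2)$ is strictly positive. But that is precisely where your argument breaks down, and neither route you sketch closes the gap.

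The Le Page perturbative route is only announced, not carried out; it would require a separate argument that the variance-type coefficient is strictly positive, which is not automatic. The ``strict subharmonicity'' route is based on a false premise: by Lemma \ref{lem:GeneralLyap}, $L$ is \emph{harmonic} (not strictly subharmonic) on $\H$, and the point $z=2\sin(\pi/2+\iota H_2)=2\cosh(H_2)$ lies in the interior of $\H$. Harmonicity of $L$ (equivalently, $L=\Re(f)$ with $f$ holomorphic) does not rule out that the real-analytic one-dimensional restriction $\theta\mapsto L(2\sin(\theta+\iota H_2))$ has a degenerate maximum of order $\ge 4$: for $g(\theta)=f(2\sin(\theta+\iota H_2))$ holomorphic in $\theta$, nothing forbids $\Re(g''(\pi/2))=0$ with the maximum preserved by the quartic term. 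So the claim ``rules out a zero of order $\ge 4$'' does not follow.

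The paper's actual nondegeneracy argument is Lemma \ref{lem:miracolo}, and it relies on finer structural input than harmonicity. Writing $a=e^{H_2}$, $b=e^{-H_2}$ and $z(\theta)=\iota a e^{-\iota\theta}-\iota b e^{\iota\theta}$, one has the decomposition
\begin{equ}
\restriction{\frac{\partial^2}{\partial\theta^2}}{\theta=\pi/2}L(z(\theta))
=-\frac{4ab}{a+b}\,\restriction{\frac{\partial L}{\partial\mathrm{Re}}}{a+b}
+\frac{(a-b)^2}{(a+b)^2}\,\restriction{\frac{\partial^2}{\partial\phi^2}}{\phi=\pi/2}L(\iota(a+b)e^{-\iota\phi}),
\end{equ}
where the second summand is non-positive (monotonicity of $L$ along circular arcs, proved from the representation \eqref{eq:|tr|}) and the first is strictly negative because $\partial_{\Re(z)}L>0$ on $\H$. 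That strict inequality is the key: it is obtained by applying the maximum principle to the harmonic function $\partial_{\Re(z)}L$, using that $\partial_{\Re(z)}\psi_n\ge 0$ everywhere on $\H$, which in turn follows from the structural fact that all zeros of the trace polynomials $P_n(z)$ lie on the imaginary axis (Lemma \ref{lem:alternative}). This is the ingredient your proposal is missing.

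Once $c(H_2)>0$ is available, your inversion and the elementary integral computation recovering $r(H_2)=\frac{2}{3\pi\sqrt{c(H_2)}}$ match \eqref{32dis}.
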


See Fig. \ref{fig:phasediag} (right)
for a pictorial illustration of the contents of Theorems \ref{th:F}, \ref{th:Fbetane0} and \ref{th:32}.

\subsection{The liquid-gas transition}
 \label{sec:congas}
In addition to frozen and liquid phases, the (non-disordered) dimer model on  bipartite periodic graphs can  exhibit so-called ``gaseous'' (or ``massive'' phases) \cite{KOS,Duse}. These are bounded, convex regions of the $(H_1,H_2)$ plane where the free energy is affine and dimer-dimer correlations decay exponentially fast. In this section, we investigate the effect of disorder on the liquid/gas transition.

The model discussed so far, in the non-disordered version, does not exhibit a gaseous phase. However, this can be easily fixed by defining the weights  $w$ as in Definition \ref{def:w}, except that $w_2(y)$ is replaced by
$w_2(y) \, \gamma^{2(y\!\!\mod 2)-1}$ with $\gamma > 1$. That is, vertical weights in rows of even (resp. odd) index are multiplied by $\gamma$ (resp. $1/\gamma$). The choice $\gamma>1$ is just a convention, since a translation of the model by $(0,1)$ turns $\gamma$ into $1/\gamma$.
In this case the pure model ($w_1$ constant, $w_2=1$) has indeed a gaseous phase, that shrinks to zero if $\gamma\to 1$  (see Appendix \ref{app:pure} and Fig. \ref{fig:phasediag-gas} (left)).
\begin{figure}[h]
    \centering
     \begin{minipage}{0.45\textwidth}
         \centering
        \includegraphics[width=0.85\textwidth]{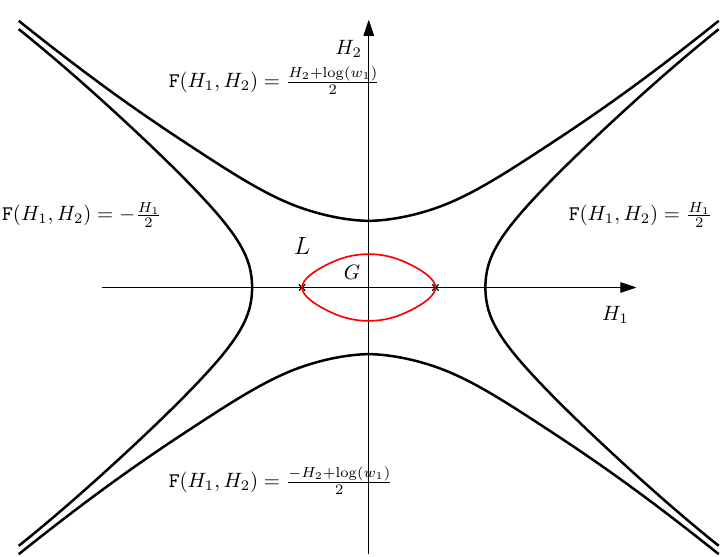} 
     \end{minipage}\hfill
     \begin{minipage}{0.45\textwidth}
         \centering        \includegraphics[scale=0.95]{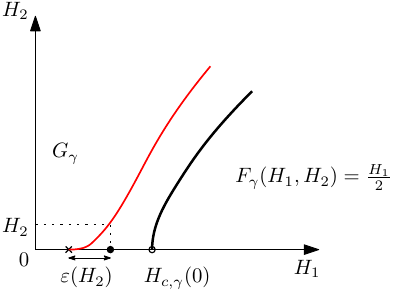} 
     \end{minipage}
        \caption{Left: The phase diagram of the non-disordered dimer
          model with $\gamma>1$. In the gaseous phase $G$, the free
          energy ${ \mathbf F}_\gamma(H_1,H_2)$ is affine and
          correlations decay exponentially fast. The two crosses mark
          the points $(\pm\log \gamma,0)$. At the liquid/gas and
          liquid/frozen boundaries, ${ \mathbf F}_\gamma$ has a $x^{3/2}$
          singularity \cite{KO,Duse}.   Right: The phase diagram of the non-disordered dimer
          model with $\gamma>1$ (we restrict by symmetry to $H_1,H_2\ge0$). To the right of the black curve, that starts at $(H_{c,\gamma}(0),0)$, the system if frozen. In the region $G_\gamma$ the free energy $F_\gamma$ is constant with respect to $H_1$. The boundary of $G_\gamma$ intersects the horizontal axis at the point  $(\log \gamma,0)$, marked with a cross.  For $\eps$ small, $F_\gamma(\log \gamma+\eps,0)\asymp \eps^{\max(1+\frac1{2\alpha(\gamma)},\frac32)}$ (Theorem \ref{th:congas}; see Eq. \eqref{eq:agamma} for $\alpha(\gamma)$). We conjecture that the boundary of $G_\gamma$ is singular for $H_2\to0$, more precisely that the relation between $H_2$ and $\eps $ is $\eps(H_2)=\Delta_\gamma(H_2)-\log \gamma\approx c(\alpha(\gamma))(H_2)^{2\alpha(\gamma)\wedge 1}$ with $c(\alpha(\gamma))$ positive if $\alpha(\gamma)<1/2$ and negative if $\alpha(\gamma)>1/2$ (Remark \ref{rem:conj} and Section \ref{sec:conj}).}
    \label{fig:phasediag-gas}
\end{figure}
Whenever $(H_1,H_2)$ approaches (from the liquid region) a generic point  on the boundary of either the  gaseous or of a  frozen region, where ${     \mathbf F}_\gamma$ is an affine function $A(H_1,H_2)$,  the free energy ${     \mathbf F}_\gamma$ has a $x^{3/2}$ singularity \cite[Th. 1.5]{Duse}, in the sense that $ {     \mathbf F}_\gamma(H_1,H_2)- A(H_1,H_2)\approx D^{3/2}$, with $D$ the distance between  $(H_1,H_2)$  and the liquid region's boundary. This behavior takes the name of Pokrovsky-Talapov law \cite{PT} in the physics literature. In this sense, in view of \eqref{32pure}, in absence of disorder, frozen/liquid and gas/liquid boundaries show the same qualitative local properties.

\begin{remark}
  \label{rem:incontext} While for $\gamma=1$ the free energy $(H_1,H_2)\mapsto \bf F$ of the homogeneous model is analytic around $(H_1,H_2)=0$, when considered as a function of the \emph{three} parameters $H_1,H_2\in\R,\gamma\ge 1$,  $\bf F_\gamma$ is \emph{not} analytic at $(H_1,H_2)=0,\gamma=1$, because the gasous phase that shrinks to a point for  $\gamma\to1^+$. This may help explain why for $\gamma=1$ disorder produces an essential singularity at $(H_1,H_2)=0$.
\end{remark}

As we will see, \emph{the situation is very different in presence of disorder.}
First of all, in analogy with Proposition \ref{p:FreeEnergyFormula}, one can write the free energy in terms of the top Lyapunov exponent of a product of random matrices. Call $Z_{\gamma,L,N}$ the disorder-dependent partition function. Then, the limit $F_\gamma(H_1,H_2)$ defined as in \eqref{e:FreeEnergyLimitDef} with $Z_{L,N}$ replaced by $Z_{\gamma,L,N}$, exists $\P$-almost surely and in $\mathbb L^1$, and equals (see  Remark \ref{rem:Fgamma})
\begin{eqnarray}
  \label{eq:FgammaFormula}
  F_\gamma(H_1,H_2)= \frac{1}{\pi} \int_{0}^{\pi/2} \max\Big[\cL_\gamma(\theta,H_2 ),|H_1|\Big] \, \dd \theta \, ,
\end{eqnarray}
where $\cL_\gamma(\theta,H_2)\ge0$ is $1/2$ times the top Lyapunov exponent of the product of the i.i.d. copies of
\begin{eqnarray}
  \label{Mgamma}
  T^\theta_\gamma\eqdef
  \begin{pmatrix}
    2w_1 \sin(\theta + \iota H_2) & (\gamma w_2)^2 \\
1 & 0
\end{pmatrix} \begin{pmatrix}
    2w_1' \sin(\theta + \iota H_2) & (w'_2/\gamma)^2 \\
1 & 0
\end{pmatrix}
\end{eqnarray}
with $w_1',w_2'$ independent copies of $w_1,w_2$.
As is the case for $\gamma=1$, $F_\gamma$ is even-symmetric in $H_1,H_2$ and  we restrict to non-negative values of the magnetic fields. 
Like in the case $\gamma=1$, it is easy to see that $H_1\mapsto
F_\gamma(H_1,H_2)$ is constant for $H_1\in [-\Delta_\gamma(H_2),\Delta_\gamma(H_2)]$ with
\begin{eqnarray}
  \label{eq:Delta2g}
\Delta_\gamma(H_2):=\min\Big\{\cL_\gamma(\theta,H_2):\theta\in\Big[0,\frac\pi2\Big]\Big\}=\cL_\gamma(0,H_2)
\end{eqnarray}
(the function $\cL_\gamma$ is non-decreasing in $\theta$,  see also Remark \ref{rmk:Lgamma}).
The effect of disorder is particularly striking around the point $(H_1,H_2)=(\Delta_\gamma(0),0)=(\log\gamma,0)$, as the next result shows:
\begin{theorem}
  \label{th:congas}
  Let $H_2=0$ and $\gamma>1$, and  $F_\gamma(H_1)\eqdef F_\gamma(H_1,0)$. Under Assumption \ref{ass:disordine},
  define
  \begin{eqnarray}
    \label{gammac}
  \gamma_c=\Big(\mathbb E(w_2^2) \, \mathbb E (w_2^{-2})\Big)^{1/4}>1
  \end{eqnarray}
  and $\alpha(\gamma)$ as the unique strictly positive solution of
  \begin{eqnarray}
    \label{eq:agamma}
    \gamma^{4\alpha(\gamma)}=\mathbb E[(w_2)^{2\alpha(\gamma)}] \, \mathbb E[(w_2)^{-2\alpha(\gamma)}]
  \end{eqnarray}
  or $\alpha(\gamma)=+\infty$ if no positive solution exists.
The free energy satisfies
  \begin{eqnarray}
    \label{eq:Fgas}
    F_\gamma(H_1)-F_\gamma(0)=
    \begin{cases}
      \label{eq:o1}
      0 & \text{if }  H_1\le \log \gamma\\
      (H_1-\log\gamma)^{\beta(\gamma)+o(1)} & \text{if }  H_1\searrow \log \gamma
    \end{cases}
  \end{eqnarray}
  where
  \begin{eqnarray}
    \label{eq:betag}
  \beta(\gamma)=\max\left(1+\frac1{2\alpha(\gamma)},\frac32\right).
  \end{eqnarray}
The function  $\gamma \in  (1,\infty) \mapsto \beta(\gamma)\in [3/2,\infty)$ is  continuous and  non-increasing, with $\beta(\gamma)\uparrow \infty$ for $\gamma\searrow 1$ and $\beta(\gamma)=3/2$ for $\gamma\ge \gamma_c$.
\end{theorem}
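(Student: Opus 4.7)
The plan is to analyse the formula \eqref{eq:FgammaFormula} by first identifying $\cL_\gamma(0)$, then obtaining the sharp small-$\theta$ asymptotics of $\cL_\gamma(\theta)-\log\gamma$, and finally plugging everything back into the $\theta$-integral.

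\textbf{Reduction to a small-$\theta$ Lyapunov asymptotic.} First I would evaluate $\cL_\gamma(0)$: at $\theta=0$ the two factors in \eqref{Mgamma} have vanishing top-left entry and their product is the diagonal matrix $\mathrm{diag}((\gamma w_2)^2,(w_2'/\gamma)^2)$. Hence the top Lyapunov exponent of i.i.d.\ copies of $T^0_\gamma$ equals $\max(\mathbb E[\log(\gamma w_2)^2],\mathbb E[\log(w_2'/\gamma)^2])=2\log\gamma$ (using $\mathbb E[\log w_2]=0$ and $\gamma>1$), so that $\cL_\gamma(0)=\log\gamma$. Since $\cL_\gamma(\cdot,0)$ is non-decreasing in $\theta$ (Remark \ref{rmk:Lgamma}), the integrand in \eqref{eq:FgammaFormula} coincides with $\cL_\gamma(\theta)$ for every $\theta$ whenever $H_1\le\log\gamma$, which gives the first case of \eqref{eq:Fgas}. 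For $H_1>\log\gamma$ sufficiently close to $\log\gamma$, strict monotonicity and continuity of $\cL_\gamma$ on $[0,\pi/2]$ let me invert and define $\theta_*(H_1)\in(0,\pi/2)$ by $\cL_\gamma(\theta_*(H_1))=H_1$, so that
\[
F_\gamma(H_1)-F_\gamma(0)=\frac{1}{\pi}\int_0^{\theta_*(H_1)}\bigl(H_1-\cL_\gamma(\theta)\bigr)\,d\theta.
\]

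\textbf{Sharp Lyapunov asymptotic at $\theta\searrow 0$.} The core step is to establish
\[
\cL_\gamma(\theta)-\log\gamma=\theta^{\kappa+o(1)},\qquad \kappa:=\min(2,2\alpha(\gamma)),\quad \theta\searrow 0.
\]
In the projective coordinate $u=x_2/x_1$, the action of $T^0_\gamma$ is the linear contraction $u\mapsto A\,u$ with $A=\gamma^{-4}(w_2'/w_2)^2$ and $\mathbb E[\log A]=-4\log\gamma<0$; for $\theta>0$, the order-$\theta$ off-diagonal entries of $T^\theta_\gamma$ turn this into a random affine-type recursion on $\mathbb P^1$. Kesten--Goldie implicit renewal theory identifies the tail exponent of the unique stationary measure as the positive $\alpha$ solving $\mathbb E[A^\alpha]=1$, which is exactly \eqref{eq:agamma}. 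Combining this tail information with Furstenberg's formula and with the recent sharp results of \cite{collin2024large,de2024scaling,collin2025lyapunov,GGG,Havret}, which make rigorous the Derrida--Hilhorst \cite{DH} and Nieuwenhuizen--Luck \cite{Luck} predictions on almost-commuting matrix products, delivers the asymptotic. The dichotomy $\min(2,2\alpha)$ reflects finiteness of moments of the invariant measure: for $\alpha\ge 1$ a standard second-order perturbation produces the natural $\theta^2$ correction, while for $\alpha<1$ the heavy tail dominates and produces $\theta^{2\alpha}$.

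\textbf{From the Lyapunov asymptotic to $\beta(\gamma)$.} Given $\cL_\gamma(\theta)-\log\gamma=\theta^{\kappa+o(1)}$, inversion gives $\theta_*(H_1)=(H_1-\log\gamma)^{1/\kappa+o(1)}$, and the elementary estimate $\int_0^{\theta_*}(H_1-\cL_\gamma(\theta))\,d\theta\asymp \theta_*^{\kappa+1}$ yields
\[
F_\gamma(H_1)-F_\gamma(0)=(H_1-\log\gamma)^{(1+1/\kappa)+o(1)}=(H_1-\log\gamma)^{\beta(\gamma)+o(1)},
\]
matching \eqref{eq:betag}. For the qualitative properties of $\beta$, I would set $f(\alpha):=\mathbb E[w_2^{2\alpha}]\mathbb E[w_2^{-2\alpha}]$: then $f$ is log-convex with $f(0)=1$ and $f'(0)=0$, so the convex function $\alpha\mapsto\gamma^{4\alpha}-f(\alpha)$ has at most one positive zero, and when it does its $\alpha$-derivative is strictly positive there. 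The implicit function theorem then gives continuity and strict monotonicity of $\gamma\mapsto\alpha(\gamma)$. The threshold $\gamma_c$ in \eqref{gammac} is exactly the value with $\alpha(\gamma_c)=1$, which is why $\beta(\gamma)=3/2$ for $\gamma\ge\gamma_c$; and $\alpha(\gamma)\searrow 0$ as $\gamma\searrow 1$ gives $\beta(\gamma)\uparrow\infty$.

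\textbf{Main obstacle.} The hard step is the sharp Lyapunov asymptotic with exponent $\min(2,2\alpha)$ and matching $o(1)$ correction: the stochastic recursion on $\mathbb P^1$ is critical as $\theta\to 0$ (the noise vanishes while $\mathbb E[\log A]$ remains strictly negative), and for $\alpha<1$ the standard second-order perturbation of the Furstenberg integral fails because the stationary law has infinite second moment. This is precisely the regime handled by the recent quantitative Lyapunov results cited above; additional care will be needed at the critical value $\alpha=1$, where logarithmic corrections give rise to the $o(1)$ in the exponent.
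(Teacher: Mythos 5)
Your high-level plan matches the paper's: identify $\cL_\gamma(0)=\log\gamma$, establish the small-$\theta$ asymptotic $\cL_\gamma(\theta)-\log\gamma=\theta^{\min(2,2\alpha(\gamma))+o(1)}$, and then integrate to obtain $\beta(\gamma)=1+1/\min(2,2\alpha(\gamma))$. The integration step and the qualitative analysis of $\beta$ via log-convexity of $\alpha\mapsto \E[w_2^{2\alpha}]\E[w_2^{-2\alpha}]$ are fine and line up with the paper.

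There is, however, a genuine gap in how the Lyapunov asymptotic is obtained. Two points. First, the reduction to the Derrida--Hilhorst matrix form is not automatic: the paper establishes explicit entry-wise sandwich bounds (Eq.\ \eqref{eq:Tgammabounds}) reducing $T^\theta_\gamma$ to matrices $\begin{pmatrix}1&\eps\\ \eps Z_\gamma & Z_\gamma\end{pmatrix}$ up to multiplicative $(1+O(\theta^2))$ factors, and then invokes a cleanly stated theorem (Theorem \ref{th:DH}, packaging \cite{GGG} for $\alpha<1$ and \cite{Havret} for $\alpha\ge 1$). Your description of the projective dynamics as a ``random affine-type recursion'' handled by Kesten--Goldie is intuition, not a proof: the induced map on $\mathbb{P}^1$ is M\"obius, not affine, and converting the tail exponent of a stationary measure into a sharp $o(1)$-error asymptotic for the Lyapunov exponent is precisely the hard content of the cited works, not a corollary of Furstenberg's formula. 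Second, and more concretely, the rigorous Derrida--Hilhorst result for $\alpha<1$ (\cite{GGG}) requires the law of $\log Z_\gamma$ to admit a $C^1$ density, which Assumption \ref{ass:disordine} does not provide. You flag a difficulty at $\alpha=1$, but the density requirement is a separate issue affecting the entire range $\gamma\in(1,\gamma_c)$. The paper bypasses it by a smoothing-and-sandwiching argument: replace $Z_\gamma$ by $Z_\gamma e^{\pm\delta X}$ with $X$ having a smooth compactly supported density, use monotonicity of the Lyapunov exponent in the entries to squeeze $\cL_\gamma(\theta)$ between the two smoothed versions, apply \cite{GGG} to each, and then let $\delta\to 0$ using continuity of $\delta\mapsto\alpha_\pm(\gamma,\delta)$. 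Without this step your argument only covers the sub-case where $\log Z_\gamma$ has a $C^1$ density, which is not assumed.
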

In other words, according to the value of $\gamma$, the order of the  phase transition (that occurs at $H_1=\log \gamma$ along the $H_2=0$ axis) varies continuously, ranging from the Pokrovsky-Talapov law (exponent $3/2$) for $\gamma\ge \gamma_c$ to the infinite-order phase transition \eqref{eq:Fessential} for  $\gamma\to1$.

Uniqueness of a strictly positive solution to \eqref{eq:agamma} follows from the strict convexity of $\alpha \mapsto \E[Z^\alpha]$ if $Z$ is a non-deterministic positive random variable.
From \eqref{eq:agamma},   \eqref{eq:betag}  and $\E(\log w_2)=0$ one sees that
\begin{eqnarray}
  \label{eq:ag2}
  \alpha(\gamma)\stackrel{\gamma\to1^+}\sim \frac{\log \gamma}{{\rm Var}(\log w_2)}, \quad \beta(\gamma)\stackrel{\gamma\to1^+}\sim \frac{{\rm Var}(\log w_2)}{2\log \gamma}.
\end{eqnarray}

\begin{remark}
  If either $\gamma<\gamma_c$  and the law of $\log w_2$ has a continuously differentiable density, or if $\gamma>\gamma_c$ (without other assumptions), then instead of  \eqref{eq:o1} we have the sharper asymptotic
\begin{eqnarray}
  \label{eq:o2}
  F_\gamma(H_1)-F_\gamma(0)\stackrel{H_1\searrow \log \gamma}\asymp (H_1-\log \gamma)^{\beta(\gamma)}.
\end{eqnarray}
\end{remark}
See Fig.  \ref{fig:phasediag-gas} (right) for an illustration of Theorem \ref{th:congas}.

\begin{remark}\label{rem:conj}
  The singular behavior \eqref{eq:o1} of the free energy  is related to the fact that the curve $H_2\mapsto \Delta_\gamma(H_2)$ is singular around $H_2=0$, where it meets the point $(H_1,H_2)=(\log \gamma,0)$. Indeed, we conjecture that
  \begin{equation}
    \label{eq:conj}
  \Delta_\gamma(H_2)-\log\gamma
\stackrel{H_2\to0}\sim c_{\alpha(\gamma)} |H_2|^{2\min(\alpha(\gamma),1)},
  \end{equation}
with $c_\alpha>0$ if $\alpha<1/2$ and $c_\alpha>0$ if  $\alpha<1/2$.   See Section \ref{sec:conj} for a the proof of a weaker (i.e., integrated) version of this statement, see \eqref{almostlaplace}.
\end{remark}

Finally, we could also prove that the exponent $3/2$ of the liquid/frozen phase transition, that occurs at
\begin{eqnarray}
  \label{Hstargamma}
  H_{c,\gamma}(H_2)=\cL_\gamma(\pi/2,H_2),
\end{eqnarray}
is unchanged by disorder as is the case for $\gamma=1$, but we refrain from giving  a formal  proof of this, as the argument is very similar to the case $\gamma=1$.

\subsection{Dimer-dimer correlations}
Up to now, we have studied the critical behavior of the disordered dimer model only in terms of free energy. In this section, we study the counterpart of the essential singularity on correlation functions.
Therefore, we focus on $H_2=0,\gamma=1$. Let us consider two edges
$e$ and $e' $ of $T_{L, N}$. We are interested in the two-point
correlations
\begin{equ} \label{e:Cov} \mathrm{Cov}_{L, N}(e,e') := P_{L, N}(e,e' \in D) -
  P_{L, N}(e \in D) P_{L, N}(e'
  \in D) \,
\end{equ}
when the two edges are located in two faraway rows.
It turns out that, as the free energy, the correlation, as well as its thermodynamic limit
\begin{equ}
\mathrm{Cov}_{\underline{w}}(e,e'):=\lim_{L\to\infty}\lim_{N\to\infty}  \mathrm{Cov}_{L, N}(e,e')\,,
\end{equ}
can be expressed in terms of the product of the matrices $M^\theta_y$, see Proposition \ref{prop:formuloneCov}. The limit depends on the disorder realization $\underline w$, as we emphasize in the notation $\mathrm{Cov}_{\underline{w}}$.
For definiteness, we make the following assumption:
\begin{Assumption}
  \label{ass:edges}
Both edges $e,e'$ are horizontal,  $e$ has its black vertex on the left and $e'$ on the right. We write
$e = \{(x,y),(x+1,y)\}$ and $e' = \{(x',y'),(x'+1,y')\}$. Without loss of generality, by translation and reflection invariance in law of the disorder, we assume  $x'=y'=0$ and $y>0$.
\end{Assumption}

 For the pure model, ${\rm Cov}(e,e')\asymp(x^2+y^{2})^{-1}$ (times an oscillating prefactor) if $H_1\in (-\mathbf H_c,\mathbf H_c)$, see e.g. \cite[Sec. 4.4]{KOS} 
, and is zero if $|H_1|\ge \mathbf H_c$, and a similar behavior holds in the whole liquid region of the $(H_1,H_2)$ plane.
In presence of disorder, the infinite-order phase transition at $H_1=0$ translates into a stretched-exponential decay of correlations, while the inverse square distance decay persists in the liquid phase:
\begin{theorem}
  \label{th:corr} Let  the edges $e,e'$ be as in Assumption \ref{ass:edges} and let $H_2=0$.
  \begin{itemize}
  \item   
    \label{th:corrdecay}
    If $H_1=0$,
    \begin{equation}
      \label{eq:corrstretched}
\left|\mathrm{Cov}_{\underline{w}}(e,e')\right|\le       C e^{-r\sqrt y}
    \end{equation}
    where $C\in(0,\infty)$ is a constant and $r$ is a strictly positive random variable, that can depend on $y$, but is such that $\sup_y\P(r<\delta)$ tends to zero when $\delta\to0$.

    If in addition $|x|\le e^{\eps_y \sqrt y}$ and $|\cos(\frac\pi2(x \!\!\mod 4))|\ge \delta>0$ for  $\eps_y\to0$ as $y\to\infty$, then
    \begin{eqnarray}
      \label{eq:stretched4}
        (-1)^{y+1}\mathrm{Cov}_{\underline{w}}(e,e') \ge  C^{-1} \,e^{-r^{-1}\sqrt y}.
    \end{eqnarray}
    The positive constant $C$ and  random variable $r$ depend  also on $\delta$ and on the sequence $(\eps_y)_y$.
  \item
      If $H_1\in(0,H_c)$, there exists  a strictly positive and finite random variable $c_+=c_+(H_1)$ such that for all $y>0$
\begin{equation}
      \label{eq:corrstretched2}
      |\mathrm{Cov}_{\underline{w}}(e,e')|\le       \frac{c_+}{ y^2}.
    \end{equation}
    If in addition  $|x|\le \delta_1 y$ and $ |\cos(x \cL^{-1}(H_1)-\frac\pi2(x \!\!\mod 4))|>\delta_2>0$ and  sufficiently small  $\delta_1$, then
    \begin{eqnarray}
      \label{eq:stretched3}
      \mathrm{Cov}_{\underline{w}}(e,e') \ge \frac {c_-}{ y^2},
    \end{eqnarray}
where $c_-=c_-(H_1,\delta_1,\delta_2)$ is a strictly positive and finite random variable.
  \end{itemize}

\end{theorem}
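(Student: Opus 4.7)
My plan proceeds in three stages, beginning with a Kasteleyn-based reduction that mirrors Proposition \ref{p:FreeEnergyFormula}: via Kasteleyn's theorem on the torus and the Fourier transform in the homogeneous horizontal direction, $\mathrm{Cov}_{\underline{w}}(e, e')$ is expressed as an integral over $\theta \in [0, \pi/2]$ whose integrand is a bilinear form in matrix elements of $P^\theta_y := M^\theta_1 \cdots M^\theta_y$ (or of its inverse projected on the top Lyapunov direction), multiplied by an oscillating prefactor depending on $x$ through factors of the form $\cos(x\theta - \frac{\pi}{2}(x \!\!\mod 4))$ coming from the horizontal Fourier mode. This is the content of the (not-yet-stated) Proposition \ref{prop:formuloneCov}, after which all four bounds \eqref{eq:corrstretched}--\eqref{eq:stretched3} become statements about a single $\theta$-integral.

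For the stretched-exponential upper bound at $H_1 = 0$, I combine two inputs. First, the pointwise counterpart of Theorem \ref{th:F}, namely $\cL(\theta) \sim \mathrm{Var}(\log w_2)/|\log \theta|$ as $\theta \searrow 0$, which follows from the essential-singularity analysis of Section \ref{sec:Lyapunov} via the identity $F'(H_1) = \cL^{-1}(H_1)/\pi$ implicit in \eqref{formulaF0}. Second, uniform concentration of $\log \|P^\theta_y\|$ around $y \cL(\theta)$, provided by Furstenberg--Kesten together with the quantitative bounds of \cite{collin2024large,collin2025lyapunov}. Bounding the relevant matrix element by $\|P^\theta_y\|^{-1}$ leaves an integral of the form
\begin{equ}
|\mathrm{Cov}_{\underline{w}}(e, e')| \le C \int_0^{\pi/2} e^{-y \cL(\theta)(1 - o(1))} \, \dd \theta,
\end{equ}
valid with high probability uniformly on a dyadic grid in $\theta$. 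Substituting $t = |\log \theta|$ turns this into $\int_0^\infty e^{-cy/t - t}\, \dd t$ with $c = \mathrm{Var}(\log w_2)$, whose Laplace saddle at $t^\star = \sqrt{cy}$ produces the decay $\exp(-2\sqrt{cy}(1 - o(1)))$; the random fluctuations of $\log \|P^\theta_y\|$ are absorbed into the random rate $r$ of \eqref{eq:corrstretched}. The matching lower bound \eqref{eq:stretched4} is obtained by restricting the integral to a window around the saddle $\theta^\star = e^{-\sqrt{cy}}$ of relative width $1/\sqrt y$: the hypothesis $|x| \le e^{\epsilon_y \sqrt y}$ makes the oscillating factor essentially constant on that window, and the arithmetic hypothesis on the cosine bounds it away from zero, while the alternating sign $(-1)^{y+1}$ emerges from the parity of the matrix element along the vertical direction.

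For the liquid regime $H_1 \in (0, H_c)$ the picture changes: the critical momentum $\theta_0 := \cL^{-1}(H_1) \in (0, \pi/2)$ plays the role of a Fermi momentum, in the sense that for $\theta \in (0, \theta_0)$ the $H_1$ term balances the Lyapunov growth of $P^\theta_y$, so that the relevant matrix element no longer decays exponentially but instead contributes an oscillatory factor whose phase-derivative in $\theta$ is, by the real analyticity and strict convexity of $F(\cdot,0)$ on $(\Delta,H_c)$ from \eqref{formulaF0}, a real-analytic and strictly monotone function of $\theta$. A double integration by parts in $\theta$ then yields the $1/y^2$ upper bound \eqref{eq:corrstretched2}; a stationary-phase computation around an interior $\theta^\star(x/y)$, under $|x| \le \delta_1 y$ and the second cosine hypothesis, gives the matching lower bound \eqref{eq:stretched3}. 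The main technical obstacle throughout is uniform-in-$\theta$ control of the random fluctuations of $\log \|P^\theta_y\|$ down to the exponentially small scale $\theta^\star = e^{-\sqrt{cy}}$, where the matrices $M^\theta_y$ are close to commuting and standard Le Page large-deviation estimates degrade; I expect to handle this regime via the recent quantitative Lyapunov results \cite{de2024scaling,collin2024large,collin2025lyapunov,GGG,Havret} together with the monotonicity properties of $\cL$ established in Section \ref{sec:Lyapunov}.
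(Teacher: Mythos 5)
The Kasteleyn reduction, the role of the oscillating prefactor $\cos(x\theta-\tfrac\pi2(x\!\!\mod 4))$, and the identification of the relevant momentum scales ($\theta^\star\sim e^{-\sqrt{cy}}$ at $H_1=0$, $\theta_c=\cL^{-1}(H_1)$ in the liquid phase) are correctly anticipated, but both branches of your argument rely on mechanisms that are either unavailable or inaccurate. For the $H_1=0$ bounds you propose a Laplace saddle on $\theta$, predicated on uniform concentration of $\log\|P^\theta_y\|$ around $y\,\cL(\theta)$ down to the saddle scale $\theta^\star$. This is exactly the route the paper rules out at the beginning of Section \ref{sec:corrdecay}: at that scale $y\asymp\log^2(1/\theta^\star)$, so the Lyapunov contribution $y\,\cL(\theta^\star)\asymp\log(1/\theta^\star)\asymp\sqrt{cy}$, while the CLT-scale fluctuations of $\log\|P^\theta_y\|$ are \emph{also} of order $\sqrt{y}$ — the mean and the fluctuations are comparable and no quantitative Lyapunov theorem gives concentration there. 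Consequently the random rate $r$ in \eqref{eq:corrstretched} is not a small correction to a deterministic $2\sqrt{c}$; it \emph{is} the answer, namely the running maximum $\max_{1\le i<n}S(i)/\sqrt{n}$ of the centered walk $S=S_+-S_-$, converging to a Brownian maximum. The paper's proof sidesteps the obstruction entirely: it lower-bounds $\langle T_1^\theta\cdots T_n^\theta e_1,e_1\rangle$ by keeping two paths in the expansion \eqref{paths}, integrates exactly in $\theta$ using $\int_0^\infty(1+\kappa u^2)^{-1}\dd u=\pi/(2\sqrt{\kappa})$, and optimizes over a free index — an elementary, fully probabilistic argument with no recourse to Lyapunov concentration.

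For $H_1\in(0,H_c)$ your picture — a balance between the $H_1$-term and the Lyapunov growth producing an oscillatory matrix element with real-analytic phase, then integration by parts and stationary phase around a moving $\theta^\star(x/y)$ — does not reflect what actually happens. At $H_2=0$ and $\theta\in[0,\pi/2]$ the matrices $M^\theta$ in \eqref{e:Matrixnew} are real with non-negative entries, so every scalar product in \eqref{formuloneCov} is a real positive quantity with no oscillatory phase; the only oscillation is the $\cos$ prefactor, which has no stationary point varying with $x/y$. The $1/y^2$ decay instead arises as the product of two Laplace-type integrals: the integrands of $I_1$ and $I_2$ in \eqref{formuloneCov} have an exponential kink at $\theta_c$ (rate $\partial_\theta\cL(\theta_c)>0$, from the monotonicity of Lemma \ref{lem:GeneralLyap} and uniform convergence of $g_n$), each concentrated on a window of width $\asymp1/y$ around $\theta_c$ and contributing $\asymp1/y$; the cosine hypothesis at $\theta_c$ merely prevents the prefactor from vanishing on that window. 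Integration by parts and stationary phase play no role.
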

We will see that the random variable $r$ can be expressed in terms of a one-dimensional Brownian motion.
Note that at the essential singularity point, correlations decay stretched-exponentially fast in the $y$ direction, but very slowly in the $x$ direction.

It is possible to derive  results analogous to those of Theorem \ref{th:corr} if one or both edges $e,e'$ are vertical, or if the colors of the vertices of one edge are swapped. 
\begin{remark}
  The restrictions in \eqref{eq:stretched4}, \eqref{eq:stretched3} on the values of $x$ are unavoidable and reflect an
  actual phenomenon. Indeed, even in the non-disordered case
  dimer-dimer correlation in the liquid region  decay like a pure power law $\asymp (|x|^2+|y|^2)^{-1}$, times an oscillating and periodically vanishing  prefactor  \cite[Sec. 4.4]{KOS}
, so
  one cannot expect a $|y|^{-2}$ behavior  uniform in $x$.
\end{remark}
\begin{remark}
  \label{rem:frozen}
For $H_1\ge H_c$, Theorem \ref{th:F} implies that $\partial_{H_1}F(H_1)=1/2$, from which one immediately deduces that, in the thermodynamic limit,   vertical edges with black vertex at the bottom are occupied with probability one, and edges of other types are occupied with density zero. That is, the system is in the frozen phase, and  dimer-dimer correlations identically vanish.
\end{remark}






\section{Action of the matrices on the Poincaré half-plane}
\subsection{Holomorphic functions on the Poincaré half-plane}

In our analysis, a special role will be played by holomorphic functions on the Poincaré right
half-plane $\mathbb H \eqdef \{z \in \C, \Re(z) > 0\}$ (we also denote
$\overline{\mathbb H} \eqdef \{z \in \C, \Re(z) \geq 0\} \sqcup
\{\infty\}$, and
$\partial \mathbb H \eqdef \overline{\mathbb H} \setminus \mathbb
H$). $\H$ admits a natural Riemannian manifold structure whose metric
is given by $\dd z \dd \overline{z}/\Re(z)^2$. Let us denote
$d_{\mathbb H}(\cdot, \cdot)$ the corresponding distance. The Schwarz-Pick theorem \cite[Th. 2.3.3]{Jost}
 states that every holomorphic function
$f \colon \mathbb H \mapsto \mathbb H$ is $1$-Lipschitz for the distance
$d_{\mathbb H}$, in the sense that for every $z_1,z_2 \in \mathbb H$
it holds $d_{\mathbb H}(f(z_1),f(z_2)) \leq d_{\mathbb
  H}(z_1,z_2)$. Thus, for $f \colon \mathbb H \mapsto \mathbb H$
holomorphic, the contraction rate
\begin{equ}
\tau_{\mathbb H}(f) \eqdef \sup_{z_1 \ne z_2 \in \mathbb H} \frac{d_{\mathbb H}(f(z_1),f(z_2))}{d_{\mathbb H}(z_1,z_2)} = \sup_{z \in \mathbb H} \frac{\Re(z)}{\Re(f(z))} |f'(z)| \, ,
\end{equ}
belongs to $[0,1]$. In fact, $\tau_{\mathbb H} (f) < 1$ as soon as the range $f(\mathbb H)$ is relatively compact in $\mathbb H$. More precisely we have the following  quantitative estimate.
\begin{lemma} \label{lem:EasyBoundContraction}
Let $f \colon \mathbb H \mapsto \mathbb H$ be a holomorphic map, and let $D \in [0, +\infty]$ be the diameter of its range $f(\mathbb H)$ for the distance $d_{\mathbb H}$. Then it holds
\begin{equ}
\tau_{\mathbb H} (f) \leq \tanh(D/2) \, .
\end{equ}
\end{lemma}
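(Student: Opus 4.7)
The plan is to transport the problem to the Poincaré disk $\mathbb D$ via a Cayley transform, which is a hyperbolic isometry and so preserves both the contraction rate $\tau$ and the hyperbolic diameter $D$. Once on the disk, I post-compose with a Möbius automorphism of $\mathbb D$ (again a hyperbolic isometry) that sends an arbitrarily chosen point $w_0 \in f(\mathbb D)$ to the origin. This reduces the statement to the following assertion: if $\hat f : \mathbb D \to \mathbb D$ is holomorphic, its image contains $0$, and the hyperbolic diameter of the image is at most $D$, then $\tau_{\mathbb D}(\hat f) \leq \tanh(D/2)$. Since every $w$ in the image then satisfies $d_{\mathbb D}(0,w) \leq D$, equivalently $|w| \leq r := \tanh(D/2)$, the image of $\hat f$ actually lies in the Euclidean subdisk $\{|w| < r\}$.

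The remaining step is a short computation exploiting this smaller range. I set $g := \hat f / r$, which is a holomorphic self-map of $\mathbb D$, so the infinitesimal Schwarz-Pick inequality gives $|g'(w)|(1-|w|^2) \leq 1 - |g(w)|^2$. Expressing the hyperbolic derivative of $\hat f = r g$ in terms of $g$ then yields
\[
\frac{|\hat f'(w)|\,(1-|w|^2)}{1-|\hat f(w)|^2} \;=\; r\cdot \frac{|g'(w)|\,(1-|w|^2)}{1 - r^2|g(w)|^2} \;\leq\; r\cdot \frac{1-|g(w)|^2}{1 - r^2|g(w)|^2} \;\leq\; r,
\]
where the last inequality follows from the elementary observation that $u \mapsto (1-u)/(1 - r^2 u)$ is non-increasing on $[0,1)$ and equals $1$ at $u = 0$. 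Taking the supremum over $w$ gives $\tau_{\mathbb D}(\hat f) \leq r = \tanh(D/2)$, and pulling back through the Cayley and Möbius isometries gives the claim for $f$.

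The degenerate cases are immediate: $D = 0$ forces $f$ to be constant, so $\tau = 0 = \tanh(0)$, while $D = +\infty$ gives $\tanh(D/2) = 1$ and the statement collapses to the usual Schwarz-Pick bound $\tau \leq 1$. I do not anticipate any substantive obstacle: the content is essentially quantitative Schwarz-Pick, and the one place requiring mild care is matching the normalizations of the hyperbolic metrics on $\mathbb H$ and $\mathbb D$ so that the factor $1/2$ inside $\tanh(D/2)$ emerges correctly.
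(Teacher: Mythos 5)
Your proof is correct, and it does take a genuinely different route from the paper's. Both proofs first transport to the disk via a Cayley transform and exploit the same key geometric fact, namely that after composing with an automorphism the range sits in the Euclidean subdisk of radius $r=\tanh(D/2)$. From there the strategies diverge. The paper normalizes \emph{both} ends: given $z$, it conjugates $f$ so that the resulting map $g$ fixes $0$, applies the classical Schwarz lemma to get $|g(y)|\le r|y|$, and then uses the convexity of $\artanh$ on $[0,1)$ to upgrade this pointwise bound to the Lipschitz estimate $d_\Di(0,g(y))\le r\,d_\Di(0,y)$. You instead normalize only the target (so that $0$ lies in the image but no base point is singled out), rescale to a self-map $g=\hat f/r$, and apply the infinitesimal Schwarz--Pick inequality together with the elementary observation that $u\mapsto(1-u)/(1-r^2u)$ is non-increasing on $[0,1)$. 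The trade-offs: your argument avoids the convexity step and the base-point normalization, which makes it a bit cleaner and arguably easier to remember; on the other hand it bounds only the pointwise hyperbolic derivative, so it implicitly relies on the identity $\sup_{z_1\neq z_2} d_\H(f(z_1),f(z_2))/d_\H(z_1,z_2)=\sup_z \Re(z)|f'(z)|/\Re(f(z))$ (the paper asserts this in its definition of $\tau_\H$, and it is standard since $\H$ is geodesic, so there is no gap). Your handling of the degenerate cases $D=0$ and $D=\infty$ is also correct.
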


\begin{proof} Given $z\in\H$ let $\psi_{0\mapsto z}$ be a M\"obius transformation that maps the unit disk $\Di$ to $\H$ and $\psi_{0\mapsto z}(0)=z$. This is an isometry between $\H$ with the distance $d_\H$ and $\Di$ with hyperbolic distance $d_\Di$. Let us fix $z \in \mathbb H$ and prove that for every $z' \in \mathbb H$ it holds
\begin{equ}
d_\H(f(z), f(z')) \leq \tanh(D/2) \, d_\H(z,z') \, .
\end{equ}
To do so, let us define $g:\Di\mapsto \Di$ as $g=\psi^{-1}_{0\mapsto f(z)}\circ f\circ \psi_{0\mapsto z}$, so that $g(0)=0$ and
  \begin{eqnarray}
    \label{eq:HD}
    d_\H(f(z),f(\psi_{0\mapsto z}(y)))=d_\Di(0,g(y)),\quad d_\H(z,\psi_{0\mapsto z}(y))=d_\Di(0,y).
  \end{eqnarray}
Since the range of $\psi_{0\mapsto z}$ is $\H$, it is enough to prove that for every $y \in \Di$
\begin{equ}
d_{\Di}(0,g(y)) \le \tanh(D/2) \, d_{\Di}(0,y) \, ,
\end{equ}
or equivalently, since $d_\Di(0,\cdot)=2 \artanh(|\cdot|)$,
  \begin{eqnarray}
    \label{eq:HD2}
 \artanh(|g(y)|) \le (\tanh(D/2) \artanh(|y|) \, .
  \end{eqnarray}
  Note that the range $g(\Di)$ has diameter $D$ for $d_\Di$, so that by assumption for every $y \in \Di$ we have: $|g(y)| \leq \tanh(D/2)$. The Schwarz lemma \cite[Th. 2.3.1]{Jost} implies that for every $y \in \Di$ we have: $|g(y)| \le \tanh(D/2) \, |y|$, from which \eqref{eq:HD2} follows using that $x \mapsto \artanh(x)$ is increasing and convex on $[0,1)$.
 \end{proof}
 \begin{remark}
   \label{rem:disk}
 The $d_\H$-diameter of a complex disk $D(z, R) \subset \mathbb H$, where $z \in \mathbb H$ and $R \in [0, \Re(z)]$, is given by
 \begin{equ} \label{e:DiameterDisk}
 \mathrm{diam}_{\mathbb H}(D(z,r)) = 2 \artanh\Big(\frac{R}{\Re(z)}\Big) \, ,
\end{equ}
 so that, in view of Lemma \eqref{lem:EasyBoundContraction}, if $f(\mathbb H) \subset D(z,R)$ then it holds $\tau_{\mathbb H}(f) \leq \frac{R}{\Re(z)}$.

\end{remark}

\subsection{Action of $2\times 2$  matrices on the Riemann sphere}

Let $\hat{\mathbb C} \eqdef \C \sqcup \{\infty\}$ be the Riemann sphere, which is in natural bijection with the projective complex plane $P(\C^2)$ via the map
\begin{equ}
z \in \C \mapsto \C \begin{pmatrix}
z \\
1
\end{pmatrix} \, , \quad \infty \mapsto \C \,\begin{pmatrix}
1 \\
0
\end{pmatrix}   \, .
\end{equ}
An invertible $2 \times 2$ complex matrix $M$ naturally acts on $P(\C^2)$, and thus also on $\hat{\C}$ via the previous mapping. More precisely, $M$ acts as an homography $T_M$ on $\hat{\C}$ as follows:
\begin{equ} \label{e:Homography}
T_M \cdot z \eqdef \frac{a z + b}{c z + d} \ \mbox{ if } M= \begin{pmatrix}
a & b \\
c & d
\end{pmatrix} \, .
\end{equ}
Let us  mention a useful fact: the range of a complex disk or half-plane by an homography is also a complex disk or half-plane.

 Let us introduce also
\begin{equ}
\cH \eqdef \Big\{
\begin{pmatrix}
z_1 \\
z_2
\end{pmatrix}, \, z_1, z_2 \in \C \,:\; \Re(z_1 \overline{z_2}) > 0\Big\} \, ,
\end{equ}
and let $\cM$ be the semi-group of complex invertible $2\times 2$  matrices $M$ such that $M(\cH) \subset \cH$ and $M^*(\cH) \subset \cH$.
Note that if $M\in \cM$, then the associated homography $T_M$ maps $\H$ to itself.

For $\alpha > 0$ and $\omega \in \C$ such that $\Re(\omega) \geq 0$,  consider the $2 \times 2$ matrix
\begin{equ}
  \label{matrixT}
M(\omega, \alpha) \eqdef \begin{pmatrix}
\omega & \alpha \\
1 & 0
\end{pmatrix} \, ,
\end{equ}
which acts on $\hat{\C}$ as
\begin{equ}
T_M \cdot z = \omega + \frac{\alpha}{z} \, .
\end{equ}
Note that the matrix $M^\theta_{H_2}(w_1,w_2)$ appearing in \eqref{e:Matrix} is a special case of the above matrix, (which indeed satisfies $\Re(\omega) \geq 0$ because $\theta \in [0, \pi/2]$).
It is immediate to see that $M(\omega,\alpha)\in\cM$, so in particular $\tau_\H(T_{M(\omega,\alpha)})\le 1$.
On the other hand, the diameter of $T_{M(\omega,\alpha)}(\mathbb H)$ for the distance $d_{\mathbb H}$ is not finite (as follows from the fact that $T_M$ maps $0$ to $\infty$). In fact, one can see that $\tau_{\mathbb H}(T_M) = 1$: $T_M$  does not strictly contract the distance. However, the product of $T_1$ and $T_2$ corresponding to two matrices $M_1,M_2$ of the form \eqref{matrixT} such that $\Re(\omega_1), \Re(\omega_2) > 0$ contracts strictly  the Poincaré metric:
\begin{lemma}
  \label{lem:duedischi}
  It holds
\begin{equ}
\tau_{\mathbb H}(T_{M(\omega_1, \alpha_1)} T_{M(\omega_2, \alpha_2)}) \leq \Big(1+\frac{2 \Re(\omega_1) \Re(\omega_2)}{\alpha_1}\Big)^{-1}.
\end{equ}
\end{lemma}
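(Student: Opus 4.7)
The plan is to compute the image $f(\mathbb H)$ of the composition $f \eqdef T_{M(\omega_1,\alpha_1)}T_{M(\omega_2,\alpha_2)}$ explicitly as a disk and then apply the criterion of Remark \ref{rem:disk}. I first observe that $T_{M(\omega,\alpha)}(\mathbb H) = \omega + \mathbb H$, because the map $z \mapsto \alpha/z$ preserves $\mathbb H$ and is surjective onto $\mathbb H$ for any $\alpha > 0$. In particular,
\begin{equation*}
g(\mathbb H) := T_{M(\omega_2,\alpha_2)}(\mathbb H) = \{w \in \C : \Re(w) > \Re(\omega_2)\}.
\end{equation*}
Here I use only $\Re(\omega_2) \geq 0$; note that the bound becomes trivial (right-hand side equals $1$) precisely when $\Re(\omega_1)\Re(\omega_2) = 0$, so the interesting case is $\Re(\omega_2) > 0$.

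Next I apply $T_{M(\omega_1,\alpha_1)}$ to this half-plane. The key geometric fact, which I would verify by a direct computation (if $w = \Re(\omega_2) + it$ then $|1/w - 1/(2\Re(\omega_2))|^2 = 1/(4\Re(\omega_2)^2)$), is that the homography $w \mapsto 1/w$ sends the half-plane $\{\Re(w) > a\}$ onto the open disk $D(1/(2a),1/(2a))$. Multiplying by $\alpha_1$ and translating by $\omega_1$, I obtain
\begin{equation*}
f(\mathbb H) = D(z_0, R), \qquad z_0 \eqdef \omega_1 + \frac{\alpha_1}{2\Re(\omega_2)}, \quad R \eqdef \frac{\alpha_1}{2\Re(\omega_2)}.
\end{equation*}
In particular $\Re(z_0) = \Re(\omega_1) + \alpha_1/(2\Re(\omega_2)) \geq R$, which confirms that the disk lies in $\overline{\mathbb H}$ (consistent with $M(\omega_j,\alpha_j) \in \cM$).

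Since $f$ is holomorphic and $f(\mathbb H) \subset D(z_0, R) \subset \overline{\mathbb H}$, Remark \ref{rem:disk} yields
\begin{equation*}
\tau_{\mathbb H}(f) \leq \frac{R}{\Re(z_0)} = \frac{\alpha_1/(2\Re(\omega_2))}{\Re(\omega_1) + \alpha_1/(2\Re(\omega_2))} = \left(1 + \frac{2\Re(\omega_1)\Re(\omega_2)}{\alpha_1}\right)^{-1},
\end{equation*}
which is the desired estimate. There is no real obstacle: the whole argument is the observation that one composition of two matrices of the form \eqref{matrixT} maps $\mathbb H$ into an honest (bounded) disk strictly inside $\mathbb H$, together with the elementary inversion-of-a-half-plane computation. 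The only mild care needed is to handle the boundary case $\Re(\omega_1) = 0$ or $\Re(\omega_2) = 0$ where the bound degenerates to $1$; there the statement is still correct since $\tau_{\mathbb H}(f) \leq 1$ always holds by Schwarz--Pick.
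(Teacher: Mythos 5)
Your proposal is correct and follows essentially the same route as the paper's proof: compute $T_{M(\omega_2,\alpha_2)}(\mathbb H)$ as the shifted half-plane $\{\Re > \Re(\omega_2)\}$, then identify its image under $T_{M(\omega_1,\alpha_1)}$ as the disk $D(\omega_1 + \alpha_1/(2\Re(\omega_2)), \alpha_1/(2\Re(\omega_2)))$, and apply Remark \ref{rem:disk}. Your explicit verification of the inversion-of-a-half-plane step and the remark on the degenerate boundary case are welcome additions but do not change the argument.
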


\begin{proof}
Denoting $T_i = T_{M(\omega_i, \alpha_i)}$ for $i \in \{1,2\}$, we have
\begin{equs}
T_2(\mathbb H) &= \{z : \Re(z) > \Re(\omega_2)\} \, \\
T_1 (T_2(\mathbb H)) &= D\Big(\omega_1 + \frac{\alpha_1}{2 \Re(\omega_2)}, \frac{\alpha_1}{2 \Re(\omega_2)}\Big) \, ,
\end{equs}
which combined with Lemma \ref{lem:EasyBoundContraction} and Remark \ref{rem:disk}, gives
\begin{equ}
\tau_{\mathbb H}(T_1 T_2) \leq \Big(1+\frac{2 \Re(\omega_1) \Re(\omega_2)}{\alpha_1}\Big)^{-1} \, .
\end{equ}
\end{proof}
An important consequence is the following:
\begin{corollary}\label{cor:map}
  Let $(M_i)_{i\ge 1}=(M(\omega_i,\alpha_i))_{i\ge1}$ be a family of matrices of the form \eqref{matrixT}, and assume that $\inf_i \Re(\omega_i)>0$ and $\sup_i\alpha_i<\infty$. Then, there exists a unique $z_0 \in \overline{\mathbb H}$ such that, for every $z\in\mathbb H$,
  $T_{M_1}\dots T_{M_n}\cdot z\to z_0$ as $n\to\infty$. The diameter of $T_{M_1}\dots T_{M_n}(\overline{\H})$ shrinks to zero exponentially fast.
\end{corollary}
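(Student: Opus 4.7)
My plan is to use Lemma \ref{lem:duedischi} twice: first to get a uniform strict contraction rate on pairs of consecutive factors, and second to show that the image of $\overline{\H}$ under any pair $T_{M_{n-1}} T_{M_n}$ is a closed Euclidean disk compactly contained in $\H$ with uniformly bounded $d_\H$-diameter. I would set $\omega_* := \inf_i \Re(\omega_i) > 0$ and $\alpha_* := \sup_i \alpha_i < \infty$; Lemma \ref{lem:duedischi} then directly yields $\tau_\H(T_{M_i} T_{M_{i+1}}) \le \rho$ for every $i$, with
\begin{equation*}
\rho := \Bigl(1 + \frac{2 \omega_*^2}{\alpha_*}\Bigr)^{-1} \in [0,1) .
\end{equation*}
The main obstacle here is that a \emph{single} factor $T_{M_i}$ satisfies $\tau_\H(T_{M_i}) = 1$ and maps $\H$ onto a half-plane of \emph{infinite} $d_\H$-diameter, so neither strict contraction nor finite diameter can be extracted one factor at a time; the whole argument has to be organized by pairs.

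For the diameter, I observe, following verbatim the computation in the proof of Lemma \ref{lem:duedischi} (with $\overline{\H}$ in place of $\H$), that $T_{M_{n-1}} T_{M_n}(\overline{\H})$ is the closed disk $\overline{D\bigl(\omega_{n-1} + \alpha_{n-1}/(2\Re(\omega_n)),\, \alpha_{n-1}/(2\Re(\omega_n))\bigr)}$. Its leftmost real part equals $\Re(\omega_{n-1}) \ge \omega_* > 0$, so it lies inside $\H$, and by Remark \ref{rem:disk} its $d_\H$-diameter is at most $D_* := 2 \artanh(\rho)$, uniformly in $n$.

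Setting $f_n := T_{M_1} \cdots T_{M_n}$ and decomposing $f_n = f_{n-2} \circ (T_{M_{n-1}} T_{M_n})$, the Schwarz--Pick sub-multiplicativity $\tau_\H(f\circ g) \le \tau_\H(f)\tau_\H(g)$ together with the pairwise strict contraction yields $\tau_\H(f_{n-2}) \le \rho^{\lfloor (n-2)/2 \rfloor}$. Combined with the previous paragraph, this gives
\begin{equation*}
\mathrm{diam}_\H\bigl(f_n(\overline{\H})\bigr) \le D_* \, \rho^{\lfloor (n-2)/2 \rfloor} ,
\end{equation*}
which is precisely the claimed exponential decay of the diameter.

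To finish, I would note that the sets $f_n(\overline{\H})$ for $n \ge 2$ form a nested decreasing family, all contained in the compact disk $f_2(\overline{\H}) \subset \H$; on this compact subset of $\H$ the Euclidean and $d_\H$ metrics are bi-Lipschitz equivalent, so the Euclidean diameters also shrink exponentially. Cantor's intersection theorem then produces a unique $z_0 \in \bigcap_{n \ge 2} f_n(\overline{\H}) \subset \H \subset \overline{\H}$. Since $f_n(z) \in f_n(\overline{\H})$ for every $z \in \H$, the uniform estimate $d_\H(f_n(z), z_0) \le D_* \rho^{\lfloor (n-2)/2 \rfloor}$ follows, which gives $f_n(z) \to z_0$ for every $z \in \H$ with exponential speed. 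Uniqueness of $z_0$ is automatic from the existence of a limit that does not depend on $z$.
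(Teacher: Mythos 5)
Your proof is correct. The paper states Corollary \ref{cor:map} without an explicit proof, presenting it as an immediate consequence of Lemma \ref{lem:duedischi}; your argument — pairing consecutive factors to get a uniform strict contraction rate, observing from the lemma's proof that each pair $T_{M_{n-1}}T_{M_n}$ sends $\overline{\H}$ to a compact disk of uniformly bounded $d_\H$-diameter, and concluding via Schwarz--Pick sub-multiplicativity plus Cantor's intersection theorem on the nested compact images — is exactly the intended one.
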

We also mention the following fact that we need later:
\begin{lemma}
  \label{lemma:trace}
  Let $A, B \in \cM$ be complex invertible $2\times 2$ matrices and assume that $T_A(\H)$, $T_{A^*}(\H)$, $T_B(\H)$, $T_{B^*}(\H)$ are contained in the closed ball $B_{d_\H}(1,R)$ for some $R < \infty$. Then, there exists $c_R > 0$ such that
  \begin{equs}
  \label{eq:Trnorm}
  c_R \|A\|_1 \le |{\rm Tr}&(A)|\le \|A\|_1 \, , \\
  \label{eq:Prodnorm}
  c_R \|A\|_1 \|B\|_1 \le  \|A & B\|_1 \le \|A\|_1 \|B\|_1 \, .
  \end{equs}
\end{lemma}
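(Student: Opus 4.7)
The plan is to exploit the hypothesis that $T_A(\H)$ and $T_{A^*}(\H)$ are contained in the compact hyperbolic ball $B_{d_\H}(1,R)$ to show that the four entries of $A$ have comparable absolute values, and that certain ratios of entries stay uniformly bounded away from the negative real axis. Writing $A = \begin{pmatrix} a & b \\ c & d \end{pmatrix}$, the ratios we read off directly are $T_A(\infty) = a/c$, $T_A(0) = b/d$, $T_{A^*}(\infty) = \bar a/\bar b$, $T_{A^*}(0) = \bar c/\bar d$. Since $B_{d_\H}(1,R)$ is a compact subset of $\H$ (symmetric under complex conjugation, as $1 \in \R$ and $\H$ is the right half-plane), each of $a/b$, $a/c$, $b/d$, $c/d$ has modulus in some $[m_R,M_R] \subset (0,\infty)$ and argument in some interval $[-\pi/2 + \eta_R,\pi/2 - \eta_R]$ with $\eta_R > 0$ depending only on $R$. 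In particular all four entries are nonzero with $|a| \asymp |b| \asymp |c| \asymp |d|$, so $\|A\|_1 \asymp |d|$; the same holds for $B$.

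For the trace lower bound, factor $a/d = (a/c)(c/d)$. Each factor has argument of absolute value at most $\pi/2 - \eta_R$, so $|\arg(a/d)| \le \pi - 2\eta_R$; together with the modulus bounds, this places $a/d$ in a compact subset of $\C$ avoiding $-1$, and by continuity $|a/d + 1| \ge c_R > 0$ uniformly. Hence
\[
|\mathrm{Tr}(A)| = |a+d| = |d| \cdot |a/d + 1| \ge c_R |d| \asymp \|A\|_1,
\]
while $|\mathrm{Tr}(A)| \le \|A\|_1$ is trivial.

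The upper bound $\|AB\|_1 \le \|A\|_1 \|B\|_1$ is sub-multiplicativity, which holds for the entrywise $\ell^1$-norm (and by equivalence of norms on $M_2(\C)$, for any reasonable choice of $\|\cdot\|_1$, with the resulting constant absorbed into $c_R$). For the lower bound, write $B = \begin{pmatrix} e & f \\ g & h \end{pmatrix}$ and compute
\[
(AB)_{11} = ae + bg = bg \cdot \bigl[(a/b)(e/g) + 1\bigr].
\]
By the first step applied to both $A$ and $B$, the numbers $a/b$ and $e/g$ both lie in $B_{d_\H}(1,R)$, so their product has bounded modulus and argument of absolute value at most $\pi - 2\eta_R$; by the same compactness argument as for the trace, $|(a/b)(e/g)+1| \ge c_R'$ uniformly. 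Therefore
\[
\|AB\|_1 \ge |(AB)_{11}| \ge c_R' |b||g| \asymp \|A\|_1 \|B\|_1.
\]

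The main obstacle is conceptual rather than technical: one must identify the right way to use the hypothesis to rule out the near-cancellation in $a+d$ and in $ae+bg$. The key observation is that both $a/d$ and $(a/b)(e/g)$ factor as a product of two elements of $B_{d_\H}(1,R) \subset \H$; the argument of any such product is strictly less than $\pi$ in absolute value, so such products avoid $-1$ uniformly. Once this is recognized, the uniformity of all constants in $R$ is a direct consequence of the compactness of $B_{d_\H}(1,R)$ in $\H$.
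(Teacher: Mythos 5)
Your proof is correct, and the core mechanism is the same as the paper's: read off $a/c = T_A\cdot\infty$, $b/d = T_A\cdot 0$, $a/b = \overline{T_{A^*}\cdot\infty}$, $c/d = \overline{T_{A^*}\cdot 0}$, observe that each lies in the conjugation-symmetric, inversion-stable set $B_{d_\H}(1,R)$, and use the resulting modulus and argument bounds to rule out near-cancellation. For the trace bound your argument is essentially identical to the paper's (the paper writes $|a+d|^2 = (|a|-|d|)^2 + 2|a||d|(1+\cos\theta)$, you write $|a+d| = |d|\,|a/d+1|$; same content). For the product bound, though, you take a genuinely more direct route. The paper introduces the cone $\cH_R=\{(z_1,z_2):z_1/z_2\in D_\H(1,R)\}$, observes that $A,B$ and their adjoints map $\cH$ into $\cH_R$, and reduces the claim to the estimate $\inf_{v\in\cH_R\setminus\{0\}}\|Av\|_1/\|v\|_1 \gtrsim_R \|A\|_1$, which it proves by expanding $A(1,z)^T$ with $z\in B_\H(1,R)$ and bounding the first coordinate. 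You instead bound a single entry of the product, $(AB)_{11} = ae+bg = bg\bigl[(a/b)(e/g)+1\bigr]$, and apply exactly the same ``product of two elements of $B_{d_\H}(1,R)$ avoids $-1$'' observation. This skips the abstraction of a uniform lower bound over the cone $\cH_R$ (and the small bookkeeping needed to connect such an inf-over-vectors statement to $\|AB\|_1$), at the modest cost of not giving the more reusable intermediate statement the paper formulates. Both arguments yield the same constants up to the choice of $\ell^1$ norm.
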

\begin{proof}
  The upper bounds are trivial and hold for any matrix.

  For the lower bound in \eqref{eq:Trnorm}, write $A$ as in \eqref{e:Homography}. Since $T_A(\H) \subset B_{d_\H}(1,R)$ and $T_{A^*}(\H)\subset B_{d_\H}(1,R)$, and since $B_{d_\H}(1,R)$ is stable by inversion, we deduce that the ratios
  \begin{equ}
  a/c = T_A \cdot \infty \, , \ b/d = T_A \cdot 0 \, , \ a/b = T_{A^*} \cdot \infty \, , \ c/d = T_{A^*} \cdot 0 \, ,
  \end{equ}
  and their inverses belong to $B_{d_\H}(1,R)$. Since $\arg(z), |z|$ are bounded away from respectively $\pm \pi/2$ and $0,\infty$ for $z\in B_{d_\H}(1,R)$, we deduce that there exists $\delta_R > 0$ such that the angle $\theta$ between $a$ and $d$ is at most $\pi-\delta_R$, and such that the ratios between any two matrix elements of $A$ are bounded between $\delta_R$ and $1/\delta_R$.
  Hence,
  \begin{equ}
    |a+d|^2=(|a|-|d|)^2+2|a| |d|(1+\cos(\theta))\ge  2|a| |d| \, (1+\cos(\theta)) \gtrsim_R |a| |d| \gtrsim_R \|A\|_1^2 \, .
  \end{equ}

For the lower bound in \eqref{eq:Prodnorm},  define $\cH_R = \Big\{
\begin{pmatrix}
z_1 \\
z_2
\end{pmatrix} \in \C^2 \,:\; z_1/z_2 \in D_{\H}(1,R)\Big\}$, and note that it is enough to show that for every $M \in \cM$ such that $M$ and $M^*$ map $\cH$ into $\cH_R$, we have
\begin{equ}
\inf_{v \in \cH_R \setminus \{0\}} \frac{\|M v\|_1}{\|v\|_1} \gtrsim_R \|M\|_1 \, .
\end{equ}
To show the latter, it is enough to show that for every $z \in B_{\H}(1,R)$ it holds
\begin{equ}
\Big\|M \begin{pmatrix}
z \\
1
\end{pmatrix} \Big\|_1 \gtrsim_R \|M\|_1 {\Big\|\begin{pmatrix}
z \\
1
\end{pmatrix} \Big\|_1}\, .
\end{equ}
On the one hand using that $D_{\H}(1,R)$ is bounded,  $\Big\|\begin{pmatrix}
z \\
1
\end{pmatrix} \Big\|_1 \lesssim_R 1$, and on the other hand, writing $M$ as in \eqref{e:Homography} we have
\begin{equ}
\Big\|M \begin{pmatrix}
z \\
1
\end{pmatrix} \Big\|_1 = \Big\|\begin{pmatrix}
a + b z \\
c + d z
\end{pmatrix} \Big\|_1 \geq |a + b z| = |a| \Big|1 + \frac{b}{a} z\Big| \, .
\end{equ}
As above, we know that $b/a$ and $z$ belongs to $D_{\H}(1,R)$, so that the angle between $1$ and $\frac{b}{a} z$ is at most $\pi - \delta_R$ and thus $|1 + \frac{b}{a} z| \gtrsim_R 1$. We conclude noting as above that the ratios between any elements of $A$ are bounded between $\delta_R$ and $1/\delta_R$, so that $|a| \gtrsim_R \|M\|_1$. This concludes.
\end{proof}

\section{General properties of $z \mapsto L(z)$}
\label{sec:Lyapunov}

Let us first recall Furstenberg's theorem \cite{Furst} for the special case of the product of i.i.d. $2 \times 2$ invertible complex matrices (we will use actually the version in \cite[Sec. II.4]{Bougerol}). 

\begin{theorem}
\label{th:Furst}
Let $\P$ be a probability measure on $\{V \in GL_2(\C) \, , \, |\det(V)| = 1\}$ satisfying  $\int \log\|V\| \, d\P(V)<\infty$ for some matrix norm $\|\cdot\|$. Let $G_\P$ be the smallest closed subgroup of $GL_2(\C)$ which contains the support of $\P$.
Assume that:
\begin{enumerate}
\item (Non-compactness) $G_\P$ is not compact.

\item (Irreducibility) $G_{\mathbb P}$ does not preserve any subsets of $P(\C^2)$ with cardinality $1$ or $2$.
\end{enumerate}
Then, there exists an unique $\mathbb P$-invariant measure, denoted $\nu$. This measure has no atoms, and the top Lyapunov exponent associated to $\P$ is strictly positive. 
 \end{theorem}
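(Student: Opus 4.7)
The plan is to prove Furstenberg's theorem in four stages: existence of an invariant measure, non-atomicity, uniqueness, and strict positivity of the top Lyapunov exponent.

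First, I would obtain an invariant measure via a Krylov--Bogolyubov averaging argument. The projective line $P(\C^2) \cong \hat{\C}$ is compact and the set of probability measures on it is compact for the weak-$*$ topology. Starting from any probability measure $\mu_0$, the Cesàro averages of the iterated push-forwards $\frac{1}{n}\sum_{k=0}^{n-1} \E[(M_k \cdots M_1)_* \mu_0]$ admit a weak-$*$ limit point, and by continuity of $\mu \mapsto \int T_{M*}\mu \, d\P(M)$ one obtains a $\P$-invariant probability measure $\nu$.

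Second, I would show that every $\P$-invariant $\nu$ is atomless. Let $c := \sup_{z \in P(\C^2)} \nu(\{z\})$ and, supposing $c>0$ for contradiction, set $F := \{z : \nu(\{z\}) = c\}$: this set is finite since $\nu$ is a probability measure. The invariance relation $\nu = \int T_{M*}\nu \, d\P(M)$ and the maximality of $c$ force $T_M(F) = F$ for $\P$-a.e.\ $M$, so $G_\P$ acts on $F$ by permutations. If $|F| \in \{1,2\}$ this contradicts the irreducibility assumption. If $|F| \geq 3$, a finite-index subgroup $H \subseteq G_\P$ fixes every element of $F$ pointwise; but any matrix in $GL_2(\C)$ with three distinct fixed points on $P(\C^2)$ is scalar, so under the normalization $|\det|=1$ we have $H \subseteq \{\alpha I : |\alpha|=1\}$, which is compact. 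A group admitting a compact subgroup of finite index is itself compact, contradicting non-compactness of $G_\P$.

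Third, for uniqueness, I would use the classical contraction principle on $P(\C^2)$. Let $(M_n)_{n\ge1}$ be i.i.d.\ of law $\P$ and set $Y_n := T_{M_1} \cdots T_{M_n}$. Combining non-compactness with irreducibility, one shows (as in Corollary~\ref{cor:map}, which is the contractive instance of this principle) that $Y_n$ almost surely collapses $P(\C^2)$, minus at most one exceptional direction, onto a random limit $Z_\infty \in P(\C^2)$; hence $(Y_n)_*\nu \to \delta_{Z_\infty}$ weakly and almost surely. Invariance of $\nu$ gives $\nu = \E[(Y_n)_*\nu]$ for every $n$, and letting $n \to \infty$ by bounded convergence yields $\nu = \E[\delta_{Z_\infty}] = \mathrm{Law}(Z_\infty)$, which depends only on $\P$; hence $\nu$ is unique.

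Finally, for positivity of $\lambda_1$ I would invoke Furstenberg's integral formula
\begin{equation*}
\lambda_1 = \int \int \log \frac{\|M v\|}{\|v\|} \, d\nu([v]) \, d\P(M),
\end{equation*}
together with the classical dichotomy: if $\lambda_1 = 0$, then under the $|\det|=1$ normalization the products $M_n \cdots M_1$ would have subexponentially growing norms, and a standard argument based on Oseledec and the invariance of $\nu$ shows that $G_\P$ must then stabilize a projective subspace or be contained in a compact subgroup, contradicting either irreducibility or non-compactness. The main obstacle I anticipate is the contraction step underlying uniqueness: it is precisely there that irreducibility (preventing the limit direction from being trapped on a finite $G_\P$-invariant set) and non-compactness (supplying sufficient hyperbolic displacement to produce contraction) must be combined most carefully, and it is also the step that will later dictate the analytic properties of $\cL(\theta, H_2)$ used throughout the paper.
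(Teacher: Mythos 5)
The paper does not prove this statement: it is Furstenberg's classical theorem, which the authors explicitly recall and attribute to \cite{Furst} and to \cite[Sec.~II.4]{Bougerol}. There is therefore no ``paper's own proof'' to compare against; your task was effectively to reconstruct a textbook proof.

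As an outline, your four-stage plan (Krylov--Bogolyubov existence, non-atomicity, uniqueness by contraction, positivity) follows the standard architecture, and the non-atomicity argument is carried out correctly: in particular you rightly observe that a $G_\P$-invariant finite set of cardinality $\geq 3$ would force a finite-index subgroup of $G_\P$ to consist of projective scalars, which under $|\det|=1$ is contained in $U(1)\cdot I$ and hence compact, contradicting non-compactness --- this is precisely the observation that reconciles the paper's hypothesis (no invariant subsets of size $1$ or $2$) with full strong irreducibility. However, the two hardest steps are only asserted, not established. The claim that $Y_n = T_{M_1}\cdots T_{M_n}$ ``almost surely collapses $P(\C^2)$, minus at most one exceptional direction'' is Furstenberg's proximality/contraction property; deriving it from non-compactness plus strong irreducibility is a substantial argument (the martingale argument in \cite[Ch.~II]{Bougerol}, or the Guivarc'h--Raugi approach), and the pointer to Corollary~\ref{cor:map} is misleading --- that corollary is a contraction estimate in the Poincar\'e metric for a specific one-parameter family of matrices with $\Re(\omega_i)>0$, not a surrogate for the general proximality statement. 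Likewise, the positivity step (``a standard argument based on Oseledec and the invariance of $\nu$'') black-boxes the dichotomy that is the heart of Furstenberg's theorem. These gaps would need to be filled if the aim were a self-contained proof; since the paper explicitly outsources this result, they are gaps relative to the literature rather than relative to the paper itself.
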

From the assumption $|\det(V)| = 1$, it is trivial to see that $\mathrm{Lyap}(V) \geq 0$, but the strict positivity is a non-trivial consequence of the irreducibility.

We equip $\C^2$ with its canonical hermitian product
$\langle \cdot, \cdot \rangle$ (linear and skew-linear respectively in
the first and second argument) and with its canonical basis
$(e_1, e_2)$. For $z \in \C$ let us define
\begin{equ} \label{e:Matrixnewz}
\M^z = \begin{pmatrix}
z w_1 & w_2^2 \\
1 & 0
\end{pmatrix} \, , \quad L(z) \eqdef \mathrm{Lyap}(\M^z) \ge0 \, ,
\end{equ}
and note that $M_{H_2}^\theta = \M^{2 \sin(\theta +\iota  H_2)}$.

The function $L$ is symmetric with respect to reflection across the  real axis, $L(\bar z)=L(z)$, and across the imaginary axis, $L(z)=L(-\bar z)$. To see the latter, note that
\begin{eqnarray}
  \label{eq:Mz-z}
\M^{-z}=-
\begin{pmatrix}
  1 & 0\\ 0& -1
\end{pmatrix} \M^z\begin{pmatrix}
  1 & 0\\ 0& -1
                 \end{pmatrix},
\end{eqnarray}
so that $L(z)=L(-z)$. Note that if $z \in \H$, then $\M^z$ is of the form \eqref{matrixT}, in particular $\M^z \in \cM$.

 Invariance by conjugation of the trace and \eqref{eq:Mz-z} imply that
\begin{eqnarray}
  \label{eq:tr-z}
  {\rm Tr}(\M_{-n+1}^z \dots \M_{n}^z) = {\rm Tr}(\M_{- n+1}^{-z} \dots \M_{n}^{-z}), \quad n\in \N,
\end{eqnarray}
where $\M_i^z$ are matrices of the form \eqref{e:Matrixnewz} with weights $(w_1(i), w_2(i))$.
\begin{lemma}
\label{lem:alternative}
For $n \geq 1$ and $z \in \mathbb H$ let us define
\begin{equ} \label{eq:alternative}
\psi_n(z) := \frac{1}{2n} \log |P_n(z)|,\quad P_{n}(z)\eqdef{\rm Tr}( \M_{-n+1}^{z} \dots \M_{n}^{z}) \, .
\end{equ}
Then $\P$-a.s., $(\psi_n)_{n \geq 0}$ converges (together to all its derivatives) to $L$, uniformly on compact subsets of $\H$.
Moreover, the roots of the polynomial $P_{n}(z)$ belong to $\iota \R$ and they come in complex conjugate pairs.
\end{lemma}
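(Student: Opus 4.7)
The plan is to treat the two assertions separately, since they are logically independent.

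\textbf{Location of the roots of $P_n$.} Since $w_1,w_2\in \R$, the entries of $\M^z$ are polynomials in $z$ with real coefficients, so $P_n$ has real coefficients and $P_n(\bar z)=\overline{P_n(z)}$. Combined with $P_n(z)=P_n(-z)$ from \eqref{eq:tr-z}, the zeros of $P_n$ already come in $\pm$ and complex-conjugate pairs, and it suffices to show $P_n$ has no zero in $\H$. Fix $z\in\H$ and write $A_n\eqdef \M^z_{-n+1}\cdots \M^z_n=\begin{pmatrix}a&b\\c&d\end{pmatrix}$; since $\cM$ is closed under products and adjoints, $A_n\in\cM$, so $T_{A_n}$ and $T_{A_n^\ast}$ both send $\H$ into $\H$. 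The crucial upgrade is that the first two factors of $A_n$ (and of $A_n^\ast$) already map $\overline\H$ into a \emph{closed disk strictly contained} in $\H$, exactly as in the proof of Lemma \ref{lem:duedischi} (the disk has real center $\Re(\omega_1)+\alpha_1/(2\Re(\omega_2))$ and radius $\alpha_1/(2\Re(\omega_2))$, so its closure does not touch $\partial\H$). Hence $T_{A_n}(\overline\H)\subset \H$ strictly (and likewise for $A_n^\ast$), so the four boundary images $a/c,\ b/d,\ a/b,\ c/d$ all lie in the open half-plane $\H$. In particular $\arg(a/b),\arg(b/d)\in(-\pi/2,\pi/2)$, so $\arg(a/d)=\arg(a/b)+\arg(b/d)\in(-\pi,\pi)$ strictly, which gives $a/d\neq-1$ and therefore $P_n(z)=a+d\neq 0$.

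\textbf{Pointwise convergence of $\psi_n$.} For fixed $z\in\H$, Kingman's subadditive ergodic theorem gives $\frac{1}{2n}\log\|A_n\|_1\to L(z)$ $\P$-almost surely. Corollary \ref{cor:map} furthermore shows that the $d_\H$-diameters of $T_{A_n}(\H)$ and of $T_{A_n^\ast}(\H)$ shrink to $0$, so for $n$ large (depending on the disorder) both images sit in a common bounded $d_\H$-ball. Lemma \ref{lemma:trace}, whose proof works verbatim with any fixed reference point in place of $1$, then yields $|\mathrm{Tr}(A_n)|\asymp \|A_n\|_1$, and hence $\psi_n(z)\to L(z)$ $\P$-almost surely.

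\textbf{Upgrade to $C^\infty_{\rm loc}$ convergence.} The first part of the lemma is used crucially here: since $P_n$ has no zero in $\H$, the function $\psi_n=\frac{1}{2n}\log|P_n|$ is \emph{harmonic} on $\H$. The bound $\|\M^z\|_1\le C(1+|z|)$, uniform in the compactly supported disorder, together with submultiplicativity of $\|\cdot\|_1$, yields a deterministic, $n$-uniform upper bound $\psi_n(z)\le C_K$ on any compact $K\subset\H$. Pick a countable dense subset $D\subset\H$; on a $\P$-full event, $\psi_n(z)\to L(z)$ for every $z\in D$ simultaneously. On that event, for $K\subset K'\subset \H$ compact, the sequence $C_{K'}-\psi_n$ is a family of non-negative harmonic functions on $K'$ with bounded value at any fixed $z_0\in D\cap K'$; by Harnack's principle it is relatively compact. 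Any subsequential uniform-on-compacts limit must agree with $C_{K'}-L$ on the dense set $D\cap K'$ and hence, by harmonicity, everywhere. This forces $\psi_n\to L$ uniformly on compacts, and standard interior estimates for harmonic functions promote this to convergence together with all derivatives.

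The main obstacle is the \emph{strict} (as opposed to merely weak) inclusion $T_{A_n}(\overline\H)\subset\H$ in the first step: without it, one could a priori have $a/d$ land on the negative real axis, spoiling $a+d\ne 0$. Once this is established, the rest reduces to standard tools (Kingman, Harnack, interior estimates for harmonic functions).
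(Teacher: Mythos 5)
Your proof is correct and reaches the same conclusions, but it takes a genuinely different route in two places. First, to show $P_n$ has no zero in $\H$, you re-derive the non-vanishing of the trace from scratch: you use the disk inclusion of Lemma~\ref{lem:duedischi} to establish $T_{A_n}(\overline\H)\subset\H$ and $T_{A_n^*}(\overline\H)\subset\H$ strictly, then argue by additivity of arguments that $\arg(a/d)\in(-\pi,\pi)$ strictly, hence $a+d\ne 0$. The paper instead simply invokes its Lemma~\ref{lemma:trace} (the quantitative trace lower bound $|\mathrm{Tr}(A)|\gtrsim_R\|A\|_1$); your argument is essentially a qualitative special case of that lemma's proof, made explicit. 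Second, and more substantively, for the upgrade from pointwise to $C^\infty_{\rm loc}$ convergence you work with the real-valued harmonic functions $\psi_n=\frac{1}{2n}\log|P_n|$ directly, using the uniform upper bound plus Harnack's inequality and interior estimates to get relative compactness. The paper instead constructs a holomorphic lift $\frac{1}{n}\log P_n$ (possible because $\H$ is simply connected and $P_n$ is non-vanishing there) and applies Montel's normality criterion plus Weierstrass to the family of \emph{holomorphic} functions; this has the side effect of identifying $L$ as the real part of a holomorphic function on $\H$, which the paper later reuses in Lemma~\ref{lem:GeneralLyap} to conclude harmonicity of $L$. Your Harnack route is a bit more elementary and equally valid, but note two minor imprecisions: (i) your appeal to Kingman for the two-sided product $\M^z_{-n+1}\cdots\M^z_n$ is slightly loose — the cleanest fix, which the paper uses, is to split it as $A_n^zB_n^z$ with $A_n^z=\M^z_{-n+1}\cdots\M^z_0$, $B_n^z=\M^z_1\cdots\M^z_n$ and apply Lemma~\ref{lemma:trace} to factor the norm — and (ii) when you write ``must agree with $C_{K'}-L$ on the dense set $D\cap K'$ and hence, by harmonicity, everywhere,'' you need one more line to identify the unique harmonic limit with $L$ itself off $D$: for each fixed $z\notin D$ use once more that $\psi_n(z)\to L(z)$ on a full event, intersect with your Harnack event, and conclude $u(z)=L(z)$ pointwise (hence the deterministic function $L$ coincides with the continuous limit $u$ everywhere).
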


\begin{proof}
Let us first check that for every (fixed) $z \in \H$, it holds $\psi_n(z) \to L(z)$ $\mathbb P$-almost surely. To do so, note that the matrices $A_n^z = \M_{-n+1}^z \dots \M_0^z$ and $B_n^z = \M_1^z \dots \M_n^z$ (and thus also $A_n^z B_n^z$) satisfy the assumption of Lemma \ref{lemma:trace} for some finite $R$ independent of $n$ (this follows from Corollary \ref{cor:map}). Thus from Lemma \ref{lemma:trace} we deduce
\begin{equ}
|\mathrm{Tr}(A_n^z B_n^z)| \asymp_R \|A_n^z B_n^z\|_1 \asymp_R \|A_n^z\|_1 \|B_n^z\|_1 \, .
\end{equ}
By the definition of the Lyapounov exponent we have $\mathbb P$-almost surely $\frac{1}{n} \log \|A_n^z\| \to L(z)$ and $\frac{1}{n} \log \|B_n^z\| \to L(z)$, so that we also deduce $\psi_n(z) \to L(z)$.

The function $P_n$ is holomorphic  on $\H$  (it is a polynomial), does not vanish by \eqref{eq:Trnorm}, and has positive values when restricted to $\R_+^*$. Since $\exp \colon \C \to \C^*$ is an holomorphic covering and $\H$ is obviously simply connected, there exists an unique holomorphic lifting, denoted $\log P_n$, of $P_n$ by the exponential covering which coincide with the usual logarithm on $\R_+^*$. Then let $f_n := \frac{1}{n} \log P_n$, which is thus holomorphic on $\H$. Observe that for every compact subset $K$ of $\H$ and $z \in K$ we have
\begin{eqnarray}
  \label{eq:Montel}
  \Re (f_n(z))=\frac{1}{n} \log |{\rm Tr}(\M_{-n+1}^z \dots \M_n^z)| \le C_K
\end{eqnarray}
for a constant $C_K \in (0, \infty)$ independent of the disorder realization, simply bounding the trace with the product of the norms of the matrices and recalling that disorder is bounded.
As a consequence, Montel's Fundamental Normality Test \cite[Sec. 2.7]{Schiff}
together with Weierstrass Theorem \cite[Sec. 1.4]{Schiff} implies that $(f_n)_{n \geq 1}$ is relatively compact in the space of holomorphic functions equipped with the topology of uniform convergence on compact subsets of the function and all its derivatives. But since $\mathbb P$ almost surely $f_n(z) \to L(z)$ for every $z \in \Q \cap \R_+^*$, we deduce that $\mathbb P$ almost surely $(f_n)$ converges for the latter topology to a non-random holomorphic function denoted $f$. Since $\Re(f_n) = \psi_n$ and  $\Re(f) = L$, this concludes the proof.

As for the last claim: we know that   $P_{n}(z)$ does not vanish on $\H$. Because of \eqref{eq:tr-z}, $P_{n}$ cannot vanish on the open left half-plane, either, so its zeros are on the imaginary axis and if $\iota \beta$ is a zero, then $-\iota \beta$ is as well (with the same multiplicity).
\end{proof}

\begin{remark}
  With a similar (actually simpler) proof, we also have that $\frac1n\log |{\rm Tr}(\M^z_0\dots \M^z_n)|$ converges to $L(z)$ (almost surely and uniformly on compact sets of $\H$).
  \label{rem:also}
\end{remark}

The purpose of this section is to prove the following properties of $L$.

\begin{lemma} \label{lem:GeneralLyap}
The function $L$ is continuous and sub-harmonic on $\C$, is harmonic on $\C \setminus \iota \R$ and it is strictly positive on $\C\setminus\{0\}$. In particular, $L$ is real analytic on $\mathbb H$.

As $z \searrow 0$ along the real axis or $z \to 0$ along the imaginary axis, it holds
\begin{equ}
   \label{eq:Lreax}
L(z) \sim \frac{\mathrm{Var}(\log w_2)}{\log(1/|z|)} \, .
\end{equ}
Finally, $x \in \R_+ \mapsto L(x+iy)$ is strictly increasing for any $y \in \R$, and furthermore it holds $\partial_{\mathrm{Re}(z)} L > 0$ on $\H$.
\end{lemma}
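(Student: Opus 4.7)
I would prove the seven properties in sequence. For harmonicity and real analyticity on $\H$, apply Lemma \ref{lem:alternative}: the holomorphic functions $f_n=\frac{1}{n}\log P_n$ converge $\P$-a.s., uniformly with all derivatives on compact subsets of $\H$, to a holomorphic limit $f$ with $\Re f=L$. Hence $L$ is real analytic and harmonic on $\H$. The symmetry $L(-z)=L(z)$ coming from \eqref{eq:Mz-z} extends harmonicity to $\C\setminus\iota\R$. For subharmonicity on the whole plane, each $\psi_n=\frac{1}{2n}\log|P_n|$ is subharmonic on $\C$ (as $1/(2n)$ times the log-modulus of a polynomial), is locally uniformly bounded above via the trivial estimate $|P_n(z)|\le 2\prod_y\|\M_y^z\|$ combined with the compact support of the disorder, and converges pointwise to $L$ on $\C\setminus\iota\R$; the standard $L^1_{\mathrm{loc}}$ compactness argument for sequences of subharmonic functions then identifies $L$ as subharmonic on all of $\C$.

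Strict positivity of $L$ on $\C\setminus\{0\}$ follows from Theorem \ref{th:Furst} applied to the normalized matrices $V^z=\M^z/w_2$, which satisfy $|\det V^z|=1$ and $L(V^z)=L(z)$ by $\E[\log w_2]=0$. Non-compactness of the generated group is immediate. Irreducibility reduces to excluding a common invariant subset of $\P(\C^2)$ of cardinality one or two: the eigenvectors of $V^z$ are proportional to $(1,1/(w_2\lambda_\pm))^T$ with $\lambda_\pm$ the roots of $\lambda^2-(zw_1/w_2)\lambda-1=0$, so for $z\neq 0$ they depend non-trivially on $(w_1,w_2)$, and Assumption \ref{ass:disordine} ($w_2$ non-deterministic and independent of $w_1$) precludes any common invariant finite set. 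Continuity of $L$ on $\C$ then combines: upper semi-continuity from subharmonicity, harmonicity (hence continuity) on $\C\setminus\iota\R$, the asymptotic \eqref{eq:Lreax} forcing $\lim_{z\to 0}L(z)=0=L(0)$, and the sharp asymptotics of \cite{GGG,Havret} providing lower semi-continuity at points of $\iota\R\setminus\{0\}$.

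The asymptotic $L(z)\sim\mathrm{Var}(\log w_2)/\log(1/|z|)$ as $z\to 0$ along either axis is the almost-commuting regime at $z=0$: there $\M^0$ is anti-diagonal, products of even length are diagonal with entries driven by the centered random walk $S_n=\sum_{k=1}^n(\log w_2(2k)-\log w_2(2k-1))$, and small $z$ weakly couples the two eigen-directions through the $(1,1)$ entry. This is exactly the setting of \cite{GGG,Havret}, whose results I would apply (after verifying the relevant hypotheses for our matrix family) to obtain the claimed asymptotic. For the strict monotonicity of $x\mapsto L(x+\iota y)$ on $\R_+$ and $\partial_{\Re z}L>0$ on $\H$: on the positive real axis the matrices $\M^x$ are entrywise non-negative and non-decreasing in $x\ge 0$, so a standard Perron-Frobenius/coupling argument yields the non-strict monotonicity $L(x_1)\le L(x_2)$ for $0\le x_1\le x_2$. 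Transported into $\H$ by continuity, this gives non-negative boundary values on $\R_+$ for the harmonic function $\partial_x L$, and together with its controlled behavior at $\iota\R$ and at infinity the maximum principle yields $\partial_x L\ge 0$ on $\H$. The strict inequality then follows from the minimum principle: if $\partial_x L$ vanished at an interior point of $\H$ it would be identically zero, forcing $L(x+\iota y)=L(\iota y)$ for all $x>0$ and all $y$, which contradicts the trivial lower bound $L(x)\ge\log x+\E[\log w_1]+o(1)$ as $x\to\infty$ combined with the boundedness of $L(\iota y)$ for bounded $y$.

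The main obstacle I expect is continuity of $L$ on $\iota\R\setminus\{0\}$, since continuity of top Lyapunov exponents in complex parameters is classically subtle; I would bypass it through the sharp asymptotic results of \cite{GGG,Havret}. A secondary difficulty is lifting the real-axis monotonicity into $\H$, which requires careful boundary control of the harmonic function $\partial_x L$ before invoking the maximum principle.
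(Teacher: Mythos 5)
Your approach to harmonicity and real analyticity on $\H$ (via Lemma \ref{lem:alternative}), the overall plan for subharmonicity, and the Furstenberg route to strict positivity all overlap with the paper's proof, but there are genuine gaps in the remaining parts.

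The most serious one is the monotonicity claim. You correctly get $x\mapsto L(x)$ non-decreasing on $\R_+$ from entrywise monotonicity of $\M^x$, but lifting this to $x\mapsto L(x+\iota y)$ for general $y$ by applying the maximum principle to the harmonic function $\partial_{\Re(z)} L$ cannot close as written: the maximum principle on $\H$ requires sign control on the \emph{entire} boundary $\partial\H=\iota\R\cup\{\infty\}$, whereas $\R_+$ is an interior ray, and you offer no argument for the sign of $\partial_{\Re(z)}L$ on $\iota\R$ — a delicate object, since $L$ fails to be harmonic across $\iota\R$. The paper's proof uses a structural fact you do not exploit: by Lemma \ref{lem:alternative} the trace polynomial $P_n(z)$ does not vanish on $\H$, is even in $z$, and has real coefficients, so all its roots lie on $\iota\R$ in conjugate pairs; hence $\psi_n(z)=c_n+\frac{1}{2n}\sum_i\log\bigl(|z-\iota\beta_{i,n}|\,|z+\iota\beta_{i,n}|\bigr)$ with each factor \emph{manifestly} increasing in $\Re(z)$ for $\Re(z)>0$. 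Passing to the limit gives monotonicity on $\overline\H$, and only then does the maximum principle (applied to the non-negative, non-trivial harmonic function $\partial_{\Re(z)}L$) give the strict inequality. Without this root-factorization, or a substitute for it, your monotonicity argument has a hole, and the subsequent $\partial_{\Re(z)}L>0$ inherits it.

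Two more points concern the references, and they are not merely cosmetic. For continuity of $L$ on $\iota\R\setminus\{0\}$ you invoke the ``sharp asymptotics of \cite{GGG,Havret}''. Those results give Derrida--Hilhorst power-law asymptotics of the Lyapunov exponent as a coupling parameter degenerates; they do not give lower semi-continuity at a \emph{fixed} non-degenerate point $\iota x$ with $x\neq 0$. What is actually needed there is a general continuity theorem for the Lyapunov exponent under non-compactness and irreducibility (the paper uses \cite[Th. B]{FK}). Similarly, for the asymptotic \eqref{eq:Lreax} you again cite \cite{GGG,Havret}, but those treat the non-critical case $\E[\log Z]<0$, giving $L(\eps)\sim C\eps^{2\alpha}$. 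In Lemma \ref{lem:GeneralLyap} the driving walk is \emph{centered} ($\E[\log w_2]=0$), the scaling is logarithmic rather than power-law, and the correct references are \cite{collin2024large,de2024scaling} (and \cite{collin2025lyapunov} for the refined statement). You have identified the structure correctly (anti-diagonal $\M^0$, centered random walk coupling two diagonals) but pointed to the wrong regime in the literature. As a minor remark, your $L^1_{\mathrm{loc}}$ compactness route to subharmonicity requires identifying the upper semicontinuous limit with $L$ pointwise on $\iota\R$, which is circular until continuity is known; the paper avoids this by observing that $z\mapsto 2^{-n}\E[\log\|\M_1^z\dots\M_{2^n}^z\|_\infty]$ is a non-increasing sequence of subharmonic functions with pointwise limit $L$.
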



\begin{proof}
\underline{Subharmonicity on $\mathbb C$.} By definition we have for every $z \in \C$
\begin{equ}
L(z) = \lim_{n \to\infty} 2^{-n} \, \E[\log \|\M_1^z \dots \M_{2^n}^z\|_\infty] \, ,
\end{equ}
and note that, by sub-additivity, this sequence is non-increasing in $n$. Using that (at fixed $\omega \in \Omega$) the functions $z \mapsto (\M^z_1 \dots \M_n^z)_{i,j}$, $i,j \in \{1,2\}$, are complex analytic (they are polynomials) while the function $w \in \C \mapsto \log |w|$ is sub-harmonic, we deduce that the function $z \mapsto \log |(\M_1^z \dots \M_n^z)_{i,j}|$ is sub-harmonic. Since sub-harmonicity is stable by taking maximum, we deduce that $z \mapsto \log \|\M_1^z \dots \M_n^z\|_{\infty}$ is sub-harmonic. Taking expectation (and using submultiplicativity of the norm and boundedness of the disorder) we deduce that $z \mapsto \frac{1}{n} \E[\log \|\M_1^z \dots \M_n^z\|_\infty]$ is sub-harmonic and uniformly bounded in $n$ on any compact subsets. This implies that their (non-increasing) point-wise limit, that is $L$, is sub-harmonic on $\C$.

\underline{Harmonicity on $\mathbb H$.}
The proof of Lemma \ref{lem:alternative} shows in particular that, on $\mathbb H$, $L = \Re(f)$ for some holomorphic function $f$. This implies that $L$ is harmonic on $\mathbb H$ (by symmetry, also on $- \mathbb H$).

\underline{Non-compactness and irreducibility on $\iota \R_*$.} Let $x \in \R_*$. To study $\M^{\iota x}$ it is convenient to notice that
\begin{equ} \label{e:Vx}
  \label{Vx}
\M^{\iota x} = \iota w_2 \begin{pmatrix}
\iota & 0 \\
0 & 1
\end{pmatrix}
V^x
\begin{pmatrix}
\iota & 0 \\
0 & 1
\end{pmatrix}^{-1} \, , \quad V^x = V^x(w_1, w_2) := \begin{pmatrix}
x w_1/w_2 & - w_2\\
1/w_2 & 0
\end{pmatrix} \, .
\end{equ}
In particular $L(\iota x) = \mathrm{Lyap}(V^x)$. The advantage of this representation is that $V^x$ belongs to $SL_2(\R)$. Let us show that the law $\P_x$ of $V^x$ satisfies the assumptions of Theorem \ref{th:Furst}.

If $G_{\P_x}$ were compact, then it would be conjugate to $SO(2)$, that is, there would exist $M \in SL_2(\R)$  such that every $g\in G_{\P_x}$ is of the form $g = M R M^{-1}$, with $R \in SO(2)$. In particular, $G_{\P_x}$ would be abelian. On the other hand, the commutator between two independent copies $V^x(1),V^x(2)$ of $V^x$ is not zero almost surely, for instance its $(1,1)$ element is proportional to $(w_2(1))^2-(w_2(2))^2$, which is not identically zero under Assumption \ref{ass:disordine}.

To check the irreducibility, note first that
the eigenvalues and eigenvectors of $V^x$ are
\begin{eqnarray}
  \label{vlambda}
\lambda_\pm=  \lambda_\pm(x,w_1,w_2)=\frac{x w_1\pm\sqrt{x^2w_1^2-4 w_2^2}}{2w_2},\qquad v_\pm=v_\pm(x,w_1,w_2)=
  \begin{pmatrix}
    w_2\lambda_\pm\\1
  \end{pmatrix}.
\end{eqnarray}
If there exists $z \in \hat{\C}$ such that $T_V \cdot z = z$ for every $V \in G_{\P_x}$, it follows that $(z \ 1)^T$ is almost surely an eigenvector of $V$. This contradicts \eqref{vlambda}, since $w_2 \lambda_\pm$ is not deterministic, under Assumption \ref{ass:disordine}. If instead there exist $z_1, z_2 \in \hat{\C}$ such that $T_V \cdot z_1, T_V \cdot z_2 \in \{z_1,z_2\}$ for every $V\in G_{\P_x}$, then it follows that $(z_1 \ 1)^T$ is almost surely an eigenvector of $V^2$. But it is easily checked that the eigenvectors of $V^2$ are also given \eqref{vlambda}, so that by the same argument as previously we deduce a contradiction.

\underline{Positivity on $\C \setminus \{0\}$.} By the non-compactness and irreducibility of $V^x$ for $x \in \R_*$, which were proved previously, and Theorem \ref{th:Furst} we deduce that $L(\iota x) > 0$ for every $x \in \R_*$. Using that, on $\H$, $L$ is non-negative (because $\E[\log |\det(\M^z)|] = 0$), harmonic, and non-constant (for instance, $L(x) \geq \E[\log(2 w_1 x)] \to +\infty$ as $x \to +\infty$), we deduce by the maximum principle that $L$ is positive on $\H$ (and also on $- \H$ by symmetry).

\underline{Continuity on $\C$.} The continuity of $L$ on $\C \setminus \iota \R$ is an obvious consequence of the harmonicity of $L$. The continuity at $0$ can be easily deduced using that $L(0) = 0$, $L \geq 0$ on $\C$ and that $L$ is sub-harmonic so in particular is upper-semi continuous. For $x \ne 0$, the continuity of $L$ at $\iota x$ follows from the non-compactness and irreducibility of $V^x$, which were proved previously, and \cite[Th. B]{FK}.

\underline{Monotonicity on $\overline\H$ with respect to $\Re(z)$.}
Let $\psi_n$ be as in \eqref{eq:alternative}.
By Lemma \ref{lem:alternative},
\begin{eqnarray}
  \label{eq:|tr|}
\psi_{n}(z) = c_n+\frac1{2n}\sum_{i=1}^n \log (|z-\iota \beta_{i,n}||z + \iota \beta_{i,n}|)
\end{eqnarray}
for $z\in\overline\H$, with $c_n$ a constant and $\beta_{i,n}\in \R$. Each summand is increasing with respect to the real part of $z$ for $\Re(z) > 0$. By taking the $n\to\infty$ limit and Lemma \ref{lem:alternative}, we deduce that $L(z)$ is increasing with respect to the real part of $z$ for for $\Re(z)>0$, and by continuity also for $\Re(z) \geq 0$.

\underline{Positivity of $\partial_{\Re(z)}L(z)$.}
Since $z \in \H \mapsto L(z)$ is harmonic, so is also $z \in \H \mapsto \partial_{\Re(z)} L(z)$. The latter is non-negative by the previous point (and non identically equal to $0$ since $L(x) \to +\infty$ as $x \to +\infty$), so that by the maximum principle for harmonic functions we deduce that $\partial_{\Re(z)} L(z)$ is strictly positive on $\H$.

\underline{Asymptotic of $L(\eps)$ as $\eps \searrow 0$.} The essential ingredient is provided by the main result of \cite{collin2024large} (or \cite[Th. 3]{de2024scaling}), which implies that, for every non-deterministic random variable $Z$ compactly supported in $(0, +\infty)$ such that $\E[\log Z] = 0$, it holds as $\eps \searrow 0$
\begin{equ} \label{e:LyapCriticalBehavior}
  {\rm Lyap}\left(
   \begin{pmatrix}
1 & \eps \\
\eps Z & Z
\end{pmatrix}\right) \sim \frac{\mathrm{Var}(\log Z)}{4 \log(1/\eps)} \, .
\end{equ}
(Under the additional assumption that the law of $\log Z$ admits a density,  \cite{collin2025lyapunov} gives the sharper asymptotic expression
\begin{eqnarray}
  \label{eq:sharper}
 {\rm Lyap}\left( \begin{pmatrix}
1 & \eps \\
\eps Z & Z
\end{pmatrix}\right) = \frac{\mathrm{Var}(\log Z)}{4 \log(1/\eps)+\kappa}+O(\epsilon^\delta)
\end{eqnarray}
for some $\kappa\in \R,\delta>0$.)
Note that
\begin{equ}
\M_{2i}^\eps \M_{2i+1}^\eps = \begin{pmatrix}
w_2(2i)^2 + \eps^2 \, w_1({2 i}) w_1({2 i + 1}) & \eps w_1({2i}) w_2({2i+1})^2 \\
\eps w_1({2i+1}) & w_2({2i+1})^2 \,
\end{pmatrix}.
\end{equ}
Since the random variables $w_1,w_2$ are bounded away from $0$ and $\infty$,  we can bound
\begin{equ}
  \label{suegiu}
w_2({2i})^2 \begin{pmatrix}
1 & C^{-1} \eps \\
C^{-1} \eps \frac{w_2({2i+1})^2}{w_2({2i})^2}& \frac{w_2({2i+1})^2}{w_2({2i})^2} \, .
\end{pmatrix} \leq \M_{2i}^\eps \M_{2i+1}^\eps \leq (1 + C \eps^2) w_2({2i})^2 \begin{pmatrix}
1 & C \eps \\
C \eps\frac{w_2({2i+1})^2}{w_2({2i})^2} & \frac{w_2({2i+1})^2}{w_2({2i})^2} \, .
\end{pmatrix} \, ,
\end{equ}
where we write $A \leq B$ to mean that $A_{i,j} \leq B_{i,j}$ for every $i,j \in \{1,2\}$. As a consequence, recalling that $\log w_2$ is centered, it holds
\begin{equ}
\frac{1}{2}   {\rm Lyap}\left( \begin{pmatrix}
1 & C^{-1} \eps \\
C^{-1} \eps Z& Z \, .
\end{pmatrix}\right) \leq L(\eps) \leq \frac{1}{2} \log(1 + C \eps^2) + \frac{1}{2} {\rm Lyap}\left( \begin{pmatrix}
1 & C \eps \\
C \eps Z& Z \,
\end{pmatrix}\right) \,
\end{equ}
where $Z=\frac{w_2({1})^2}{w_2({0})^2}$.
Because of \eqref{e:LyapCriticalBehavior}, both left and right hand sides are equivalent to $\frac{\mathrm{Var}(\log w_2)}{\log(1/\eps)}$ as $\eps \searrow 0$, which concludes.

\underline{Asymptotic of $L(\iota \eps)$ as $\eps \to 0$.} Recall that $L({\iota \eps}) = {\rm Lyap}(V^\eps)$ with $V^\eps$ defined in \eqref{Vx}; write ${\rm Lyap}(V^\eps)=\frac12{\rm Lyap}(W^\eps)$ with $W^\eps$ equal minus the product of two independent copies of $V^\eps$:
\begin{eqnarray}
  W^\eps =
  \begin{pmatrix}
    \frac{w_2}{w_2'} - \eps^2 \frac{w_1 w_1'}{w_2 w_2'} & \eps w_1 \frac{w_2'}{w_2}\\
- \eps \frac{w_1'}{w_2 w_2'} & \frac{w_2'}{w_2}
  \end{pmatrix}
\end{eqnarray}
  where $(w_1',w_2')$ is an i.i.d. copy of $(w_1,w_2)$.
  Defining the random variables
  \begin{eqnarray}
\kappa\eqdef\frac{w_2}{w_2'},\quad a\eqdef \frac12\Big(w_1+\frac{w_1'}{w_2^2}\Big),\quad
b\eqdef     \frac12\Big(-w_1+\frac{w_1'}{w_2^2}\Big),
  \end{eqnarray}
  we have
  \begin{eqnarray}
    W^\eps=\left[I- \eps a
    \begin{pmatrix}
      0& -1\\1& 0
    \end{pmatrix} - \eps b \begin{pmatrix}
      0& 1\\1& 0
                      \end{pmatrix}- \eps^2
    \begin{pmatrix}
       \frac{w_1 w_1'}{w_2 w_2'} &0\\0&0
    \end{pmatrix}
    \right]\begin{pmatrix}
      \kappa& 0\\0& \frac1\kappa.
    \end{pmatrix}
  \end{eqnarray}
  Note that $\kappa$ is compactly supported in $(0,\infty)$ and its logarithm is centered; $a,b$ are compactly supported and $a>|b|$. Then, \cite[Th. 1]{de2024scaling} implies that, as $|\eps| \to 0$,
  \begin{eqnarray}
    \label{eq:baldes}
    {\rm Lyap}(W^\eps)\sim \frac{{\rm Var}(\log \kappa)}{|\log(|\eps|)|}=2\frac{{\rm Var}(\log w_2)}{|\log(|\eps|)|}.
  \end{eqnarray}
  As a consequence, $L(\iota \eps)\sim \frac{{\rm Var}(\log w_2)}{|\log(|\eps|)|}$ as $x \to 0$.
\end{proof}

Finally, a fact that will turn out useful later:
\begin{lemma}
  \label{lem:miracolo}
  Let $a, b > 0$ and for $\theta\in[0,\pi/2]$ define $z(\theta) = \iota a e^{-\iota\theta} - \iota b e^{\iota \theta}$. Then, the function $\theta \in [0, \pi/2] \mapsto L(z(\theta))$ is strictly increasing, and satisfies
  \begin{equs}
    \label{eq:miracolo}
    \frac{\partial}{\partial \theta} L(z(\theta)) &> 0, \ \theta \in (0, \pi/2) \\
    \restriction{\frac{\partial}{\partial \theta}}{\theta = \pi/2} L(z(\theta)) = 0 \, &, \quad \restriction{\frac{\partial^2}{\partial \theta^2}}{\theta = \pi/2} L(z(\theta)) < 0 \, . \label{eq:miracolo'}
  \end{equs}
\end{lemma}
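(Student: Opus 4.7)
The plan is to leverage the polynomial representation from Lemma \ref{lem:alternative}: writing $P_n(z)=\prod_{i}w_1(i)\,\prod_{j=1}^n(z^2+\beta_{j,n}^2)$ with $\beta_{j,n}\in\R$, we have $\psi_n(z)=\frac{1}{2n}\log|P_n(z)|\to L(z)$ uniformly, with all derivatives, on compact subsets of $\H$; the same Montel-type argument actually extends this convergence to compact subsets of $\C\setminus\iota\R$, in particular to a neighborhood of $z(\pi/2)=a+b>0$. Starting from $z(\theta)^2=2ab-a^2e^{-2\iota\theta}-b^2e^{2\iota\theta}$, a direct expansion gives
\[
|z(\theta)^2+\beta^2|^2 = c^2+a^4+b^4-2c(a^2+b^2)\cos(2\theta)+2a^2b^2\cos(4\theta),\qquad c:=2ab+\beta^2,
\]
whose $\theta$-derivative factors as $4\sin(2\theta)\bigl\{2ab[a^2+b^2-2ab\cos(2\theta)]+\beta^2(a^2+b^2)\bigr\}$. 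For $\theta\in(0,\pi/2)$ one has $\sin(2\theta)>0$ and $a^2+b^2-2ab\cos(2\theta)>0$ (since $\cos(2\theta)<1$), so each summand of
\[
\partial_\theta\psi_n(z(\theta))=\frac{1}{2n}\sum_{j=1}^n\partial_\theta\log|z(\theta)^2+\beta_{j,n}^2|
\]
is strictly positive there, and uniform convergence of derivatives gives $\partial_\theta L(z(\theta))\geq 0$.

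The vanishing of the first derivative at $\theta=\pi/2$ is immediate from the symmetry $z(\pi-\theta)=\overline{z(\theta)}$ (direct from the definition) combined with $L(\overline w)=L(w)$, which implies that $\theta\mapsto L(z(\theta))$ is symmetric around $\pi/2$. For the second derivative there, using that $\partial_\theta|z(\theta)^2+\beta^2|^2$ vanishes at $\theta=\pi/2$ and $|z(\pi/2)^2+\beta^2|^2=((a+b)^2+\beta^2)^2$, I get
\[
\partial_\theta^2\log|z(\theta)^2+\beta^2|\Big|_{\theta=\pi/2}=\frac{-4\bigl[2ab(a+b)^2+\beta^2(a^2+b^2)\bigr]}{((a+b)^2+\beta^2)^2},
\]
which is strictly negative for every $\beta\in\R$ and is bounded above by $-8ab(a+b)^2/((a+b)^2+\beta^2)^2$, uniformly in $\beta$ on bounded sets.

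The main obstacle is promoting these strict pointwise signs to strict signs for the limits $\partial_\theta L$ and $\partial_\theta^2 L|_{\pi/2}$, since both expressions above decay as $O(1/\beta^2)$ as $|\beta|\to\infty$ and the $\beta_{j,n}$ could, a priori, escape to infinity. I will handle this via tightness of the empirical measures $\mu_n=\frac{1}{n}\sum_j\delta_{|\beta_{j,n}|}$: fixing $z_0=\iota\varepsilon$ with small $\varepsilon>0$, the inequality $\log|z_0^2+\beta^2|=\log|\beta^2-\varepsilon^2|\geq 2\log|\beta|-O(1)$ for $|\beta|\geq 2\varepsilon$, together with $\psi_n(z_0)\to L(z_0)<\infty$, forces $\mu_n([M,\infty))\to 0$ as $M\to\infty$, uniformly in $n$. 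Then, for any fixed $\theta\in(0,\pi/2)$, restricting the sum defining $\partial_\theta\psi_n(z(\theta))$ to indices with $|\beta_{j,n}|\leq M$, where each summand exceeds some $\delta_{M,\theta}>0$, and letting $n\to\infty$ followed by $M\to\infty$ yields $\partial_\theta L(z(\theta))\geq\delta_\theta>0$, proving \eqref{eq:miracolo}. The analogous argument, using the uniform negative upper bound on $\partial_\theta^2\log|z(\theta)^2+\beta^2||_{\theta=\pi/2}$ for $|\beta|\leq M$, yields $\partial_\theta^2 L(z(\theta))|_{\theta=\pi/2}<0$, proving the second half of \eqref{eq:miracolo'}. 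Strict monotonicity of $\theta\mapsto L(z(\theta))$ on the full interval $[0,\pi/2]$ then follows by integrating \eqref{eq:miracolo}, together with continuity of $L$ at the endpoints from Lemma \ref{lem:GeneralLyap}.
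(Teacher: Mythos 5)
Your proposal is correct in its overall architecture and reaches the same conclusion, but by a genuinely different route than the paper. The paper's proof first establishes only \emph{non-negativity} of $\partial_\theta L(z(\theta))$ (the ``weak claim,'' via the same per-summand derivative computation you carry out), and then upgrades to strict inequalities by a clever algebraic decomposition: writing $\partial_\theta L(z(\theta))$ as a non-negative multiple of $\partial_{\Re} L$ (strictly positive on $\H$ by harmonicity plus the maximum principle, from Lemma~\ref{lem:GeneralLyap}) plus a non-negative multiple of a derivative of the type already controlled by the weak claim. Your route instead stays at the level of the zeros $\beta_{j,n}$ of $P_n$ and shows that the per-summand derivatives are not merely sign-definite but uniformly bounded away from $0$ on $\{|\beta|\le M\}$, then closes the gap (strictness could be lost because the bound decays like $\beta^{-2}$) by proving tightness of the empirical measure of the $|\beta_{j,n}|$. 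This is a perfectly legitimate alternative that avoids invoking $\partial_{\Re}L>0$; the price is that you must control the zero distribution, which the paper's decomposition trick sidesteps entirely. Your algebra for $|z(\theta)^2+\beta^2|^2$, its $\theta$-derivative, and the second $\theta$-derivative at $\pi/2$ all check out and agree with the paper's first-derivative computation.

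One concrete flaw in the execution: the tightness argument as written anchors at $z_0=\iota\varepsilon$, and this does not work. The convergence $\psi_n\to L$ from Lemma~\ref{lem:alternative} is established only on compact subsets of $\H$ (your extension is to $\C\setminus\iota\R$), so $\psi_n(\iota\varepsilon)\to L(\iota\varepsilon)$ is simply not available. Worse, even granting it, the summands $\log|\beta_{j,n}^2-\varepsilon^2|$ with $|\beta_{j,n}|$ near $\varepsilon$ are unbounded \emph{below}, so boundedness of the total $\psi_n(\iota\varepsilon)$ does not force the large-$\beta$ contribution to be small; the small-$\beta$ part can absorb it. The fix is immediate and changes nothing structurally: anchor at any $z_0\in\H$, most simply $z_0=1$, for which each summand is $\log(1+\beta_{j,n}^2)\ge 0$ and $\ge\log(1+M^2)$ on $\{|\beta_{j,n}|\ge M\}$; together with $\psi_n(1)\to L(1)<\infty$ a.s.\ and the law of large numbers for $\frac{1}{2n}\sum\log w_1(i)$, this gives the required $\limsup_n \mu_n([M,\infty))\lesssim 1/\log(1+M^2)$. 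With that substitution your proof is complete. (The detour through convergence on $\C\setminus\iota\R$ is also unnecessary: $z(\theta)=(a+b)\sin\theta+\iota(a-b)\cos\theta$ lies in $\H$ for all $\theta\in(0,\pi/2]$, so the compact sets you need are already covered by the paper's statement.)
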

Note that $\theta:[0,\pi/2]\mapsto z(\theta)$ describes a quarter of ellipse in $\C$, with larger horizontal axis, starting at $\iota(a-b)$ and ending at $a+b$.
\begin{proof}
For $\theta \in [0, \pi]$ , we define $z(\theta) = \iota a e^{-\iota\theta} - \iota b e^{\iota \theta}$. Since $L$ is real analytic on $\H$, we deduce that $\theta \in (0, \pi) \mapsto L(z(\theta))$ is also real analytic. Furthermore, since $L$ is symmetric by complex conjugation, we have $L(z(\theta)) = L(z(\pi - \theta))$, which in particular implies the first point of \eqref{eq:miracolo'}.

We first claim that if $a, b \geq 0$, then $\theta \mapsto L(z(\theta))$ is non-decreasing on on $[0, \pi/2]$. We start from \eqref{eq:|tr|}, with $z=z(\theta)$.
An elementary computation shows
  \begin{eqnarray}
    \partial_\theta \log (|z(\theta)-\iota \beta||z(\theta)+\iota \beta|) =
    2 \sin(2\theta) \frac{2|z(\theta)|^2a b+\beta^2(a^2+b^2)}{|z(\theta)+\beta^2|^2}\ge0 \, ,
  \end{eqnarray}
  hence $\partial_\theta \psi_n(z(\theta))\ge0$. Since the l.h.s. of \eqref{eq:|tr|} (together with all its derivatives) converges uniformly to $L$ on compact subsets of $\H$, we deduce the desired claim (using also the continuity of $L$ on $\overline \H$). In particular, this implies \eqref{eq:miracolo} and \eqref{eq:miracolo'} but with weak instead of strict inequalities.

  Let us show that these inequalities are strict as soon as $a,b > 0$. Up to switching $a$ and $b$ and using the symmetry of $L$ by complex conjugation, we can always assume that $a \geq b$. Observe that
  \begin{equs}
  \frac{\partial}{\partial \theta} &L(z(\theta)) = (a+b) \cos(\theta) \restriction{\frac{\partial L}{\partial \mathrm{Re}}}{z(\theta)} - (a-b) \sin(\theta) \restriction{\frac{\partial L}{\partial \mathrm{Im}}}{z(\theta)} \\
  &= \frac{4 a b}{a + b} \cos(\theta) \restriction{\frac{\partial L}{\partial \mathrm{Re}}}{z(\theta)} + \frac{a - b}{a + b} \Big((a-b) \cos(\theta) \restriction{\frac{\partial L}{\partial \mathrm{Re}}}{z(\theta)} - (a+b) \sin(\theta) \restriction{\frac{\partial L}{\partial \mathrm{Im}}}{z(\theta)}\Big) \\
  &= \frac{4 a b}{a + b} \cos(\theta) \restriction{\frac{\partial L}{\partial \mathrm{Re}}}{z(\theta)} + \frac{a - b}{a + b} \, \restriction{\frac{\partial}{\partial \phi}}{\pi/2 - \arg(z(\theta))} L(\iota |z(\theta)| e^{- \iota \phi}) \, .
  \end{equs}
Note that the first derivative is strictly positive by Lemma \ref{lem:GeneralLyap}, while the second derivative is non-negative by the previous claim, from which we obtain \eqref{eq:miracolo}. As for \eqref{eq:miracolo'}, observe that
\begin{equs}
\restriction{\frac{\partial^2}{\partial \theta^2}}{\theta = \pi/2} &L(z(\theta)) = - (a+b) \restriction{\frac{\partial L}{\partial \mathrm{Re}}}{a+b} + (a-b)^2 \restriction{\frac{\partial^2 L}{\partial \mathrm{Im}^2}}{a+b} \\
&= - \frac{4 a b}{a+b} \restriction{\frac{\partial L}{\partial \mathrm{Re}}}{a+b} + \frac{(a-b)^2}{(a+b)^2} \Big(- (a+b) \restriction{\frac{\partial L}{\partial \mathrm{Re}}}{a+b} + (a+b)^2 \restriction{\frac{\partial^2 L}{\partial \mathrm{Im}^2}}{a+b}\Big) \\
&= - \frac{4 a b}{a+b} \restriction{\frac{\partial L}{\partial \mathrm{Re}}}{a+b} + \frac{(a-b)^2}{(a+b)^2} \restriction{\frac{\partial^2}{\partial \phi^2}}{\phi = \pi/2} L(\iota (a+b) e^{- \iota \phi}) \, .
\end{equs}
The first derivative is strictly positive by Lemma \ref{lem:GeneralLyap}, while the second derivative is non-positive by the previous claim, from which we obtain \eqref{eq:miracolo'}.
\end{proof}

\begin{remark} \label{rmk:Lgamma}
To deal with the case $\gamma > 1$, we will be interested in the function
\begin{equ}
  \label{eq:Lgammaz}
L_\gamma(z) := \frac{1}{2} \, \mathrm{Lyap}\Big(
\begin{pmatrix}
z w_1 & (\gamma w_2)^2 \\
1 & 0
\end{pmatrix}
\begin{pmatrix}
z w_1' &  (w_2'/\gamma)^2 \\
1 & 0
\end{pmatrix}\Big) \, ,
\end{equ}
where $(w_1', w_2')$ are independant copies of $(w_1, w_2)$. Let us highlight that Lemma \ref{lem:alternative}, Lemma \ref{lem:GeneralLyap} (without the asymptotic results) and Lemma \ref{lem:miracolo} can be easily generalized to this case. The main difference is that $L_\gamma(0) = \log \gamma > 0$ (and let us mention that the asymptotic of $L_\gamma$ at $0$ would be completely different, see for instance \eqref{e:AsympLgamma3/2} and \eqref{e:AsympLgammasingular} for more details). Actually, because of this latter fact, the continuity of $L$ at $0$ would require some extra work to prove by hand, or could be easily deduced from general theorems such as \cite[Th. B]{Avila}. Yet, for our purpose, we only need the continuity of $\restriction{L}{\R_+}$ at $0$, which can be directly checked noticing on the one hand that $L$ is upper semicontinuous (because subharmonic) and on the other hand that $\restriction{L}{\R_+}$ is non-decreasing because the entries of the involved matrices are non-negative and non-increasing in $z \in \R_+$.
\end{remark}

\section{Kasteleyn theory}

\subsection{Formula for $Z_{L,N}$ and for $F$}

The Kasteleyn matrix \cite{Kasteleyn,Gorin} $K$ of $T_{L, N}$ is a $(2L N)\times(2L N)$ matrix, whose rows (resp. columns) are indexed by the white (resp. black) vertices w/b of the graph, and it is a weighted and signed variant of the graph's adjacency matrix.
Since our graph is not planar but rather is embedded in the torus, we actually need four Kasteleyn matrices $K_{\tau},\tau=(\tau_1,\tau_2)\in\{0,1\}^2$: given an edge $(\rb,\rw)=e$, we define
\begin{equation}
  K_\tau(\rw,\rb )=
  \begin{cases}
    w_1(y) e^{-H_2} \,(-1)^{\tau_1 1_{x=1}} & \text{if }  (x',y')=(x-1,y)\\
    w_1(y) e^{H_2}\,(-1)^{\tau_1 1_{x=2L}}& \text{if } (x',y')=(x+1,y)\\
    \iota w_2(y-1) e^{H_1} \, (-1)^{\tau_2 1_{y=-N+1}} & \text{if } (x',y')=(x,y-1)\\
\iota w_2(y) e^{-H_1} \, (-1)^{\tau_2 1_{y=N}} & \text{if } (x',y')=(x,y+1)
  \end{cases}
\end{equation}
if $\rw$ is the white vertex of coordinates $(x,y)$ and $\rb$ is the black vertex of coordinates $(x',y')$. If $\rb,\rw$ are not neighbors, then $K_\tau(\rw,\rb)=0$.
We view the Kasteleyn matrices as mapping vectors labeled by black vertices to  vectors labeled by white vertices.
Kasteleyn's theorem states that
\begin{equation}
  \label{eq:Ktheorem}
    Z_{L,N}=\frac12\sum_{\tau\in\{0,1\}^2}c(\tau)\det(K_{\tau})
\end{equation}
where $c(\tau)\in\{-1,1\}$ and $\frac12|\sum_\tau c(\tau)|=1$: three of the signs are equal and the fourth is opposite.

Consider the functions $f_{k,y_0} \colon T_{L, N} \to \C$, for $k \in \mathbb R$ and $y_0 \in \{-N+1, \dots, N\}$, given by
\begin{equ} \label{e:SemiEigenfunctions}
f_{k, y_0} (x, y) \eqdef \frac{1}{\sqrt{L}} \exp\Big(\frac{\iota \pi k}{L} x\Big) \1_{y = y_0} \, ,
\end{equ}
and let us denote $f_{k, y_0}^{\mathrm{b}}$ and $f_{k, y_0}^{\mathrm{w}}$ the restriction of $f_{k, y_0}$ to respectively $V_B, V_W$. Let $F_{\tau_1}:=\{-\frac{L}{2}+1,\dots,\frac{L}{2}\}-\frac{\tau_1}2$ (recall that we are assuming $L$ to be even). Note that the family $(f_{k, y_0}^{\mathrm{b}})_{k\in F_{\tau_1}, y_0\in \{-N+1,\dots,N\}}$ (resp. $(f_{k, y_0}^{\mathrm{w}})_{k\in F_{\tau_1}, y_0\in \{-N+1,\dots,N\}}$)
is an orthonormal basis of $\C^{V_B}$ (resp. $\C^{V_W}$) for the canonical Hermitian product. It can be readily checked that for $k \in F_{\tau_1}$ and $y_0 \in \{-N+1, \dots, N\}$
\begin{multline} \label{e:ActionKasteleyn}
  K_{\tau} f^{\mathrm{b}}_{k, {y_0}} =
  w_1(y_0)\Big(
e^{-\iota\theta+H_2 }+e^{\iota\theta-H_2 }
  \Big)
  f^{\mathrm{w}}_{k, {y_0}} \\+ \iota w_2({y_0})e^{H_1} f^{\mathrm{w}}_{k, {y_0+1}}(-1)^{\tau_2  1_{ y_0=N}} + \iota  e^{\frac{2 \iota \pi}{L}} w_2({y_0}-1)e^{-{H_1}} f^{\mathrm{w}}_{k, {y_0}-1}(-1)^{\tau_2 1_{ y_0=-N+1}} \, .
\end{multline}

Thus, viewed in this basis, the matrix $K_{\tau}$ is given by a block-diagonal matrix
\begin{equ} \label{e:MatrixKasteleyn}
\mathrm{Mat}_{\mathfrak{b}}(K_{\tau}) =
\begin{pmatrix}
K^{\tau_2}_{(\tau_1/2)} & & & \\
& K^{\tau_2}_{1+(\tau_1/2)} & & \\
& & \ddots & \\
& & & K^{\tau_2}_{L-1+(\tau_1/2)}
\end{pmatrix}
\end{equ}
where for $k \in F_{\tau_1}$ we define $\theta = \frac{k \pi}{L}$ and
\begin{equ}
K^{\tau_2}_k \eqdef
\begin{pmatrix}
z_\theta(-N+1) & \iota w_2(-N+1)e^{-H_1 }  & & & (-1)^{\tau_2} \iota w_2(N) e^{H_1} \\
\iota w_2(-N+1)e^{H_1}  & z_\theta(-N+2) & \iota w_2(-N+2)e^{-H_1 } &  & \\
& \raise.2cm\hbox{$\iota w_2(-N+2)e^{H_1} $} & \ddots & \ddots \\
&  & \ddots & \ddots & \iota w_2(N-1) e^{-H_1 }  \\
(-1)^{\tau_2} \iota w_2(N) e^{-H_1} & & & \iota w_2(N-1) e^{H_1}  & z_\theta(N)
\end{pmatrix} \, .
\end{equ}
with $z_\theta(y) := w_1({y})(e^{-\iota\theta+H_2}+e^{\iota\theta-H_2})=2w_1(y)\cos(\theta +\iota H_2)$.

Let $\tilde K_k$ be the matrix defined like $K^{\tau_2}_k$, except that
$w_2(N)=0$. Observe that $\tilde K_k$ is tridiagonal and, letting $D_k(y,N)$ the determinant of $\tilde K_k$ with the first $y + N - 1$ rows and columns removed, one has
the recurrence
\begin{eqnarray}
  \label{eq:rec}
  D_k(y,N)=z_{k\pi/L}(y)D_k(y+1,N)+ w_2(y)^2D_k(y+2,N)
\end{eqnarray}
with the convention $D_k(N+1,N)=1,D_k(N+2,N)=0$.
This can be rewritten as
\begin{equs} \label{e:RecurrenceFormulaMatrix}
&\begin{pmatrix}
D_k(y,N) \\
D_k(y+1,N)
\end{pmatrix}
= R^{\frac{\pi k}{L}}_y\cdot
\begin{pmatrix}
D_k(y+1,N) \\
D_k(y+2,N)
\end{pmatrix} \,
\end{equs}
with $R^\theta_{H_2,y} = M^{\pi/2-\theta}_{-H_2,y}$ and $M^\theta_{H_2,y}$ as in Proposition \ref{p:FreeEnergyFormula}. Note that this expression is independent of $H_1$.
Thus, $\det(\tilde K_k) = D_k(-N+1,N)$ is given by the top-left entry of the matrix $R^{\frac{\pi k}{L}}_{H_2,-N+1} \dots R^{\frac{\pi k}{L}}_{H_2,N}$, which is also
\begin{equ} \label{e:DetKkFormula}
\det(\tilde K_k) =  \langle R^{\frac{\pi k}{L}}_{H_2,-N+1} \dots R^{\frac{\pi k}{L}}_{H_2,N} e_1, e_1 \rangle \, .
\end{equ}

Given a square matrix $A$ whose coefficients are naturally indexed by $\{-N+1, \dots, N\}^2$ and given indices $i_1 < \dots < i_k$, $j_1 < \dots < j_k$, let us denote $A_{\{ \setminus i_1, \dots, i_k \setminus j_1, \dots, j_k\}}$ the square matrix obtained from $A$ by removing the rows $i_1, \dots, i_k$ and the columns $j_1, \dots j_k$. Note that by multilinearity of the determinant we can write
\begin{equs}
\det(K_k^{\tau_2}) &= \det(\tilde K_k) + w_2(N)^2 \, \det((\tilde K_k)_{\{ \setminus -N+1,N \setminus -N+1, N\}}) \\
&- (-1)^{\tau_2} \iota w_2(N) e^{H_1} \det((\tilde K_k)_{\{ \setminus -N+1 \setminus N\}}) - (-1)^{\tau_2} \iota w_2(N) e^{-H_1} \det((\tilde K_k)_{\{ \setminus N \setminus -N+1\}}) \, .
\end{equs}
The first two determinants are respectively given by $D_k(-N+1, N)$ and $D_k(-N+2, N-1)$, while the two last determinants can be easily computed since they correspond to matrices which are respectively upper and lower triangular. We thus deduce (note that $\iota^{2N} = -1$ since $N$ is odd)
\begin{equs}
  \det(K^{\tau_2}_k) &= \langle R^{\frac{\pi k}{L}}_{H_2,-N+1} \dots R^{\frac{\pi k}{L}}_{H_2,N} e_1, e_1 \rangle+w_2({N})^2\langle R^{\frac{\pi k}{L}}_{H_2,-N+2} \dots R^{\frac{\pi k}{L}}_{H_2,N-1} e_1, e_1 \rangle \\
  &\qquad + (-1)^{\tau_2} 2 \cosh(
2N H_1) \prod_{y=-N+1}^{N}w_2(y) \\
 &={\rm Tr}(R^{\frac{\pi k}{L}}_{H_2,-N+1} \dots R^{\frac{\pi k}{L}}_{H_2,N}) + (-1)^{\tau_2} 2 \cosh(
2NH_1 ) \prod_{y=-N+1}^{N}w_2(y) \, . \label{detKtauk}
\end{equs}
Altogether, we have proved:
\begin{lemma} \label{lem:ExactFormulaFinite} Let $R^\theta_{H_2,y}\eqdef M^{\pi/2-\theta}_{-H_2,y}$ with $M$ as in Proposition \ref{p:FreeEnergyFormula}.
The partition function on the torus $T_{L,N}$ is given by \eqref{eq:Ktheorem}, with
\begin{eqnarray} \label{e:ExactFormulaFinite}
  \det(K_\tau)=\prod_{k\in F_{\tau_1}}\left[
  {\rm Tr}(R^{\frac{\pi k}{L}}_{H_2,-N+1} \dots R^{\frac{\pi k}{L}}_{H_2,N})+2(-1)^{\tau_2}\cosh(
2NH_1 ) \prod_{y=-N+1}^{N}w_2(y)
  \right]
.
\end{eqnarray}
\end{lemma}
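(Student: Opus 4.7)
The plan is to apply Kasteleyn's theorem to write $Z_{L,N}$ as a signed sum of four determinants, and then compute each $\det(K_\tau)$ explicitly by diagonalizing in the translation-invariant $x$-direction and reducing the remaining one-dimensional problem to a $2\times 2$ transfer matrix product.

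First, since the random weights $w_1(y),w_2(y)$ do not depend on $x$, the Kasteleyn matrix $K_\tau$ commutes with horizontal shifts and is block-diagonalized by the Fourier basis \eqref{e:SemiEigenfunctions}. The explicit action \eqref{e:ActionKasteleyn} shows that in this basis $K_\tau$ takes the block-diagonal form \eqref{e:MatrixKasteleyn}, with one $2N\times 2N$ block $K^{\tau_2}_k$ per Fourier mode $k\in F_{\tau_1}$. Each such block is tridiagonal apart from two corner entries encoding the toroidal sign $(-1)^{\tau_2}$ and the magnetic field $e^{\pm H_1}$, so that $\det(K_\tau)=\prod_{k\in F_{\tau_1}}\det(K^{\tau_2}_k)$.

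Next I would compute $\det(K^{\tau_2}_k)$ in two steps. For the tridiagonal truncation $\tilde K_k$ obtained by setting $w_2(N)=0$ in the two corners, Laplace expansion along the last row gives the three-term recurrence \eqref{eq:rec} for the principal minors $D_k(y,N)$; this recurrence is the linear action of the $2\times 2$ transfer matrix $R^{\pi k/L}_{H_2,y}=M^{\pi/2-k\pi/L}_{-H_2,y}$ on $(D_k(y,N),D_k(y+1,N))^T$, so iterating from the boundary conditions $D_k(N+1,N)=1$, $D_k(N+2,N)=0$ identifies $\det(\tilde K_k)=D_k(-N+1,N)$ with the $(1,1)$ entry of $R^{\pi k/L}_{H_2,-N+1}\cdots R^{\pi k/L}_{H_2,N}$. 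To recover the full $\det(K^{\tau_2}_k)$, expand by multilinearity along the two rows carrying the corner entries: four contributions appear, namely $\det(\tilde K_k)$; a ``double-corner'' term equal to $w_2(N)^2$ times the $(1,1)$ entry of the shortened product $R^{\pi k/L}_{H_2,-N+2}\cdots R^{\pi k/L}_{H_2,N-1}$; and two triangular ``single-corner'' determinants that evaluate directly to a product of $w_2(y)$'s weighted by $e^{\pm H_1}$ and $(-1)^{\tau_2}$.

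The key algebraic identity is that the first two contributions assemble into the full trace ${\rm Tr}(R^{\pi k/L}_{H_2,-N+1}\cdots R^{\pi k/L}_{H_2,N})$. Indeed, writing $A=R_{-N+1}B$ with $B$ the shortened product, the second row $(1,0)$ of $R_{-N+1}$ forces $A_{21}=B_{11}$, so that ${\rm Tr}(AR_N)=A_{11}z_\theta(N)+A_{12}+A_{21}w_2(N)^2=(AR_N)_{11}+w_2(N)^2 B_{11}$ as required. The remaining single-corner terms, once the factor $\iota^{2N}=-1$ is extracted (which is where the assumption that $N$ is odd is used), combine into $2(-1)^{\tau_2}\cosh(2NH_1)\prod_{y=-N+1}^N w_2(y)$, giving \eqref{e:ExactFormulaFinite}. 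The main obstacle is purely bookkeeping: tracking the factors of $\iota$ coming from the vertical Kasteleyn weights, the boundary signs $(-1)^{\tau_2}$, and the placement of $e^{\pm H_1}$ in the corners, as well as verifying the trace identity above. Once these routine checks are done, no further analysis is required.
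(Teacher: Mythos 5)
Your proposal follows the paper's proof essentially step for step: Kasteleyn factorization, Fourier block-diagonalization into $2N\times 2N$ blocks $K^{\tau_2}_k$, reduction of the tridiagonal part $\tilde K_k$ to a $2\times 2$ transfer-matrix product via the three-term recurrence, multilinear expansion in the two corner entries into four terms, and identification of the first two with the trace and the last two (each triangular) with $(-1)^{\tau_2}2\cosh(2NH_1)\prod w_2(y)$ using $\iota^{2N}=-1$. The only cosmetic difference is that you spell out the short algebraic identity $\mathrm{Tr}(AR_N)=(AR_N)_{11}+w_2(N)^2B_{11}$ explicitly, which the paper leaves implicit; this is correct and matches \eqref{detKtauk}.
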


Recall that, by symmetry, we are restricting to $H_1,H_2\ge0$. Next, we have:

    \begin{lemma}
      \label{lem:illimite}
      The limit $\lim_{N\to\infty}\frac1{2N}\log Z_{L,N} $ exists $\P$-a.s. and in $\L^1$, is a.s. constant and equals
      \begin{eqnarray}\label{eq:illimite}
        \Phi_{L}:=\lim_{N\to\infty}\frac1{2N}\log Z_{L,N}=
        \max_{\tau_1=0,1} \sum_{k\in F_{\tau_1}}\max\left({\rm Lyap}(R^{\frac{\pi k}{L}}_{H_2}),H_1 \right).
      \end{eqnarray}
       Moreover,
       \begin{eqnarray}
         \lim_{L\to\infty}\frac1{2L}\Phi_L=\frac1{\pi}\int_0^{\pi/2}\max(\cL(\theta,H_2 ),H_1 )\dd\theta.
         \label{illimite}
  \end{eqnarray}
    \end{lemma}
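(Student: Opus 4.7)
The plan is to use the exact formula of Lemma \ref{lem:ExactFormulaFinite} and identify the a.s. exponential growth rate of $Z_{L,N}$ factor by factor. Write
$$T_N^\theta := \mathrm{Tr}(R^\theta_{H_2,-N+1}\cdots R^\theta_{H_2,N}) \, , \qquad A_N := 2\cosh(2NH_1)\prod_{y=-N+1}^{N}w_2(y) \, ,$$
so that $\det(K_\tau)=\prod_{k\in F_{\tau_1}}(T_N^{\pi k/L}+(-1)^{\tau_2}A_N)$. First I analyze $T_N^\theta$ and $A_N$ separately, then combine them and sum over $\tau$, and finally pass to the Riemann sum as $L\to\infty$.

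For each $\theta\in[-\pi/2,\pi/2]$, $R^\theta_{H_2,y}$ is of the form $\M^z_y$ with $z=2\cos(\theta+\iota H_2)\in\overline{\H}$, so Lemma \ref{lem:alternative} (extended if needed to boundary points of $\H$ by comparing the trace with the operator norm via Lemma \ref{lemma:trace} and Corollary \ref{cor:map}) yields $\tfrac{1}{2N}\log|T_N^\theta|\to\mathrm{Lyap}(R^\theta_{H_2})$ $\P$-a.s. The strong law of large numbers combined with Assumption \ref{ass:disordine} ($\E[\log w_2]=0$) gives $\tfrac{1}{2N}\log A_N\to H_1$ $\P$-a.s. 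Two exponentially growing sequences of different rates cannot cancel below the larger rate, and a coincidence $\mathrm{Lyap}(R^\theta_{H_2})=H_1$ produces at most $o(N)$ corrections on the $\log$-scale, hence
$$\tfrac{1}{2N}\log\bigl|T_N^{\pi k/L}+(-1)^{\tau_2}A_N\bigr|\xrightarrow[N\to\infty]{}\max\bigl(\mathrm{Lyap}(R^{\pi k/L}_{H_2}),H_1\bigr)\quad\P\text{-a.s.},$$
uniformly in $\tau_2$. Multiplying the $L$ factors in $F_{\tau_1}$ (a finite product at fixed $L$) yields $\tfrac{1}{2N}\log|\det(K_\tau)|\to\Phi_{L,\tau_1}:=\sum_{k\in F_{\tau_1}}\max(\mathrm{Lyap}(R^{\pi k/L}_{H_2}),H_1)$, independently of $\tau_2$.

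The triangle inequality in $Z_{L,N}=\tfrac12\sum_\tau c(\tau)\det(K_\tau)$ immediately gives $\limsup_N\tfrac1{2N}\log Z_{L,N}\le\max_{\tau_1}\Phi_{L,\tau_1}$. The matching lower bound requires an additional parity argument. Expanding
$$\det(K_{\tau_1,\tau_2})=\sum_{S\subset F_{\tau_1}}(-1)^{\tau_2|F_{\tau_1}\setminus S|}A_N^{|F_{\tau_1}\setminus S|}\prod_{k\in S}T_N^{\pi k/L},$$
and summing over $\tau_2\in\{0,1\}$ weighted by $c(\tau_1,\tau_2)\in\{\pm1\}$, one extracts either the even or the odd part in $|F_{\tau_1}\setminus S|$ depending on whether $c(\tau_1,0)=\pm c(\tau_1,1)$. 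The leading term $S=S_{\tau_1}:=\{k\in F_{\tau_1}:\mathrm{Lyap}(R^{\pi k/L}_{H_2})>H_1\}$ contributes provided $|F_{\tau_1}\setminus S_{\tau_1}|$ has the same parity as the extracted component. Since exactly three of the Kasteleyn signs $c(\tau)$ agree and one is opposite, for at least one $\tau_1\in\{0,1\}$ the parity is correct, yielding a contribution of magnitude $\exp(2N\max_{\tau_1}\Phi_{L,\tau_1}+o(N))$ to the positive quantity $Z_{L,N}$, and hence $\liminf_N\tfrac{1}{2N}\log Z_{L,N}\ge\max_{\tau_1}\Phi_{L,\tau_1}$. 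Almost-sure convergence to $\Phi_L=\max_{\tau_1}\Phi_{L,\tau_1}$ follows, and the $\mathbb L^1$ convergence is a consequence of the deterministic bound $\tfrac{1}{2N}|\log Z_{L,N}|\le C(L)$ implied by the compact support of the disorder.

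For \eqref{illimite}, note that $\theta\mapsto\max(\cL(\theta,H_2),H_1)$ is continuous by Lemma \ref{lem:GeneralLyap}, $\pi$-periodic, and even (using $M^{\pi-\theta}_{H_2}=\overline{M^\theta_{H_2}}$ and identity \eqref{eq:Mz-z}). Hence for both $\tau_1\in\{0,1\}$ the Riemann sum $\tfrac{1}{L}\sum_{k\in F_{\tau_1}}\max(\cL(\pi/2-\pi k/L,H_2),H_1)$ converges, after the change of variable $\phi=\pi/2-\theta$, to $\tfrac{2}{\pi}\int_0^{\pi/2}\max(\cL(\phi,H_2),H_1)\,\dd\phi$, making $\max_{\tau_1}$ immaterial in the limit and yielding \eqref{illimite} upon dividing by $2$. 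The hardest part of the argument is the parity/cancellation analysis in the previous step, which ultimately hinges on the specific Kasteleyn sign pattern on the torus.
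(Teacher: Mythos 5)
Your decomposition of $\det K_\tau$ and the Riemann-sum step at the end are fine, but the central part of the argument — extracting the exponential growth rate of $Z_{L,N}$ from the alternating sum over $\tau$ — has two genuine gaps.

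First, the lower bound. Your parity argument asserts that ``for at least one $\tau_1\in\{0,1\}$ the parity is correct,'' but nothing forces this: the parity of $|F_{\tau_1}\setminus S_{\tau_1}|$ depends on the disorder realization and on $H_1$ (via how many Lyapunov exponents exceed $H_1$), while the Kasteleyn sign pattern $c(\tau)$ is fixed combinatorial data. With three signs agreeing and one opposite, exactly one value of $\tau_1$ extracts the even part in $|F_{\tau_1}\setminus S|$ and the other extracts the odd part; there is no reason the realized parities of $|F_0\setminus S_0|$ and $|F_1\setminus S_1|$ cooperate, so the leading term can be annihilated for \emph{both} $\tau_1$. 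The paper sidesteps this entirely with the Kasteleyn-theoretic inequality $\max_\tau |\det K_\tau|\le Z_{L,N}\le 2\max_\tau|\det K_\tau|$ (equation \eqref{eq:factor2}): the left inequality holds because each $\det K_\tau$ is itself a $\pm1$-signed sum over perfect matchings, hence its modulus is bounded by the unsigned sum $Z_{L,N}$. This gives the matching lower bound immediately, with no need to track signs after summing over $\tau$.

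Second, the resonance case. You claim that a coincidence $\mathrm{Lyap}(R^{\pi k/L}_{H_2})=H_1$ ``produces at most $o(N)$ corrections on the log-scale, uniformly in $\tau_2$.'' This is not justified and, as stated, cannot be true uniformly in $\tau_2$: when the two exponential rates coincide, the sum $T_N^\theta+(-1)^{\tau_2}A_N$ could, for one sign of $\tau_2$, suffer cancellation down to a strictly smaller exponential scale, and you have no control on the subleading behaviour to rule this out. The paper handles this cleanly by observing that $\frac1{2N}\log Z_{L,N}$ and every limit point of it are convex in $H_1$, so it suffices to prove \eqref{eq:illimite} for $H_1$ in the dense, co-countable set where $H_1\ne\mathrm{Lyap}(R^{\pi k/L}_{H_2})$ for all $k\in F_0\cup F_1$; on that set the two exponential rates are strictly separated and no cancellation can occur. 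You should import both of these ingredients to close the argument.
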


\begin{proof}[of Lemma \ref{lem:illimite}]
By Kasteleyn's formula \eqref{eq:Ktheorem},
    \begin{eqnarray}
      \label{eq:factor2}
    \max_{\tau\in\{0,1\}^2}|\det K_\tau|\le Z_{L,N}\le 2 \max_{\tau\in\{0,1\}^2}|\det K_\tau|  \, ,
    \end{eqnarray}
 where $\det K_\tau$ is given by \eqref{detKtauk}.
 We have by
Lemma \ref{lem:alternative}
  \begin{eqnarray}
 \lim_{N\to\infty}  \frac1{2N}\log | {\rm Tr}(R^{\theta}_{H_2,-N+1} \dots R^{\theta}_{H_2,N})|=
    {\rm Lyap}(R^{\theta}_{H_2}) \, ,
  \end{eqnarray}
  and by the law of large numbers (note that $\mathbb E(\log w_2)=0$ by Assumption \ref{ass:disordine}, and  that $H_1 \geq 0$)
  \begin{eqnarray}
 \lim_{N\to\infty}    \frac1{2N}\log\left(\cosh(
2NH_1 ) \prod_{y=-N+1}^{N}w_2(y)   \right) = H_1 \, .
  \end{eqnarray}

Since $\frac{1}{2N} \log Z_{L,N}$ as well as any of its limit points is convex in $H_1$, to prove the convergence \eqref{eq:illimite} it is enough to prove the latter for a dense subset of $H_1$, so that we can assume that $H_1 \ne {\rm Lyap}(R^{\frac{\pi k}{L}}_{H_2})$ for every $k \in F_{0}\cup F_1$. Then,
$
\log(    \max_\tau|\det K_\tau|)=\max_\tau\log |\det K_\tau|
$
  and
  \begin{eqnarray}
    \lim_{N\to\infty}\frac1{2N}\log |\det K_\tau|    =\sum_{k\in F_{\tau_1}}\max\left( {\rm Lyap}(R^{\frac{\pi k}{L}}_{H_2} ),H_1 \right)
  \end{eqnarray}
  where the assumption $H_1 \ne {\rm Lyap}(R^{\frac{\pi k}{L}}_{H_2} )$ rules out the possibility that ${\rm Tr}(\dots)$ combines with $\cosh(2NH_1 )\prod w_2(y)$  to produce a quantity that grows exponentially less than both terms individually. We conclude then that $\frac{1}{2N} \log Z_{N,L}$ converges $\mathbb P$-almost surely to the right hand side of \eqref{eq:illimite}. The convergence occurs also in $\mathbb L^1$ by the dominated convergence theorem, since by boundedness of the disorder $\frac{1}{2N} |\log Z_{N,L}|$ is uniformly bounded by some deterministic constant.

  Recall that $R^\theta_{H_2}=M^{\pi/2-\theta}_{-H_2}$; then, ${\rm Lyap}(R_{H_2}^{\theta})$ is even in $H_2$ and  $\theta$, and for $\theta\in[0,\pi/2]$ it equals $\cL(\pi/2-\theta,H_2)$.
  Taking the limit $L\to\infty$ of $\Phi_L/(2L)$ and recalling from Lemma \ref{lem:GeneralLyap} that $\cL(\theta,H_2) = L(2 \sin(\theta + i H_2))$ is continuous in $\theta$, we get the claim of Lemma \ref{lem:illimite}.
\end{proof}

Proposition \ref{p:FreeEnergyFormula} is now an immediate consequence of Lemmas \ref{lem:ExactFormulaFinite} and \ref{lem:illimite}.

\begin{remark}
  \label{rem:Fgamma}
  A direct inspection of the proof of Lemma \ref{lem:illimite} shows that the same procedure can be followed in the case of $\gamma\ne 1$. All what changes is that in the matrices $M^\theta_y$, the random variable $w_2(y)$ is multiplied by $\gamma$ if $y$ is even and by $1/\gamma$ if $y$ is odd. Since the product of two such matrices for neighbouring $y,y+1$ gives the matrix $T^{\theta }_\gamma$ defined in \eqref{Mgamma}, formula \eqref{eq:FgammaFormula} readily follows. The factor $1/2$ in the definition of  $\cL_\gamma$ is due to the fact that a product of $2N$ matrices of type $M^\theta_y$ turns into a product of $N$ matrices of type $T^\theta_y$.
\end{remark}

\subsection{Formula for the $2$-points correlations}
\label{sec:2pc}
Consider two
horizontal edges $e =(\rb_1,\rw_1)$ and $e' =(\rb_2,\rw_2)$ of
$T_{L, N}$ where $\rb_i$ are black vertices and $\rw_i$ are white
vertices. Assume that $e$ has the black vertex on the left and $e'$
on the right. In terms of vertex coordinates, let us put
$\rb_1=(x_1,y_1),\rw_1=(x_1+1,y_1),\rw_2=(x_2,y_2),\rb_2=(x_2+1,y_2).$ We are
interested in the two-point correlation
\begin{equ} \label{e:Cov}
\mathrm{Cov}_{L, N, \underline{w}}(e,e') = P_{L, N}(e,e' \in D) - P_{L, N}(e \in D) P_{L, N}(e' \in D) \,
\end{equ}
for $|y-y'|\gg1$.
It turns out that, as the free energy, such quantity can be exactly computed in terms of the product of the matrices $M^\theta_y$.

We use the same notations  as  in the proof of Lemma \ref{lem:ExactFormulaFinite}.
To lighten the formulas that follow, we introduce the following notation: given
 $f:\{0,1\}^2 \to \mathbb C$, we let
 \begin{equ}
   \Langle f_\tau\Rangle:=\frac{\frac12\sum_{\tau\in \{0,1\}^2}c(\tau)\det(K_\tau)f(\tau)}
   {Z_{L,N}}
 \end{equ}
 and note that $\Langle 1\Rangle=1$.
 From Kasteleyn's theory, we know that
\begin{equ} \label{e:CovKasteleynTheory}
  \mathrm{Cov}_{L, N,\underline w}(e,e')= w_1(y_1)w_1(y_2)\left[\big\Langle \det\left(K^{-1}_\tau(\rb_i,\rw_j)\right)_{1\le i,j\le 2}\big\Rangle - \big\Langle K^{-1}_\tau(\rb_1,\rw_1) \big\Rangle \, \big\Langle K^{-1}_\tau(\rb_2,\rw_2) \big\Rangle\right]
\end{equ}
so that the problem reduces to computing
\begin{equ} \label{e:InverseKasteleyn}
K^{-1}_\tau\big(\rb,\rw\big) = \big\langle K^{-1}_\tau \1_{(x',y')}, \1_{(x,y)} \big\rangle \, ,
\end{equ}
for any pair of black and white vertices of coordinates $(x,y)$ and $(x',y')$ respectively.
\begin{remark}
   If we worked on a planar graph rather than the torus, there would be no sum over $\tau$ in \eqref{e:CovKasteleynTheory}, and the product with the minus sign would cancel exactly the diagonal term in the $2\times 2$ determinant.
   In our case, the cancellation occurs only in the thermodynamic limit, see the proof of Proposition \ref{prop:formuloneCov}.
\end{remark}

Since $(f_{k,y_0}^{\mathrm{b}})_{k\in F_{\tau_1},y_0\in\{-N+1,\dots,N\}}$ (resp. $(f_{k,y_0}^{\mathrm{w}})_{k\in F_{\tau_1},y_0\in\{-N+1,\dots,N\}}$) is an orthonormal basis of $\mathbb C^{V_B}$ (resp. $\mathbb C^{V_W}$), it holds
\begin{equ} \label{e:DiracFourier}
  \1_{(x,y)} = \frac{1}{\sqrt{L}} \sum_{k\in F_{\tau_1}} \exp\Big(-\frac{\iota \pi k}{L} x\Big) f^{\mathrm{b}}_{k,y} \, , \quad \1_{(x',y')} = \frac{1}{\sqrt{L}} \sum_{k\in F_{\tau_1}} \exp\Big(-\frac{\iota \pi k}{L} x'\Big) f^{\mathrm{w}}_{k,y'} \, ,
\end{equ}
so that
\begin{equs}
\big\langle K_\tau^{-1} \1_{(x',y')}, \1_{(x,y)} \big\rangle &= \frac{1}{L} \sum_{k\in F_{\tau_1}} \exp\Big(\frac{\iota \pi k}{L} (x-x')\Big) \, \big\langle K_\tau^{-1} f^{\mathrm{w}}_{k,y'}, f^{\mathrm{b}}_{k,y} \big\rangle \, , \\
&= \frac{1}{L} \sum_{k\in F_{\tau_1}} \exp\Big(\frac{\iota \pi k}{L} (x-x')\Big) \, \Big(K^{\tau_2}_{\frac{\pi k}{L}}\Big)^{-1}[y,y'] \, ,  \label{e:InverseKasteleynFourier}
\end{equs}
where we used in the first line that $K_\tau$ "preserves the Fourier mode" so that each $\big\langle K_\tau^{-1} f^{\mathrm{w}}_{k,y}, f^{\mathrm{b}}_{\ell,y'} \big\rangle$ with $k \ne \ell$ vanishes, and in the second line we used \eqref{e:MatrixKasteleyn}.

Now, we compute $\big(K^{\tau_2}_\theta\big)^{-1}[y,y']$ for any $-N+1 < y,y' < N$ and $\theta \in \mathbb R$. We use the cofactor identity to rewrite
\begin{equ} \label{e:CofactorIdentity}
\big(K^{\tau_2}_\theta\big)^{-1}[y,y'] = (-1)^{y+y'} \, \frac{\det((K^{\tau_2}_\theta)_{\{ \setminus y' \setminus y\}})}{\det K^{\tau_2}_\theta} \, .
\end{equ}
We have already computed $\det K^{\tau_2}_\theta$ (see \eqref{detKtauk})
so that it only remains to compute $\det((K^{\tau_2}_\theta)_{\{ \setminus y' \setminus y\}})$. We first deal with the case $y < y'$. Let us recall that $\tilde K_\theta$ denotes the matrix defined like $K_\theta^{\tau_2}$, except that $w_2(N) = 0$. Using multilinearity of the determinant, it can be readily checked that
\begin{equs}
\det((K^{\tau_2}_\theta)_{\{ \setminus y' \setminus y\}}) &= \det((\tilde K_\theta)_{\{ \setminus y' \setminus y\}}) + w_2(N)^2 \det((\tilde K_\theta)_{\{ \setminus -N+1, y', N \setminus -N+1, y, N\}}) \\
&\quad + (-1)^{\tau_2} \iota w_2(N) e^{H_1} \det((\tilde K_\theta)_{\{ \setminus -N+1, y' \setminus y, N\}}) \\
&\quad + (-1)^{\tau_2} \iota w_2(N) e^{-H_1} \det((\tilde K_\theta)_{\{ \setminus y', N \setminus -N+1, y\}}) \, . \label{e:CofactorTildeK}
\end{equs}
Now observe that these matrices are of the following form:
\begin{equs} \label{e:KasteleynMinusRowColumn1}
(\tilde K_\theta)_{\{ \setminus y', \setminus y\}} &=
\begin{pmatrix}
K_{-N+1, y-1} & 0 & 0 \\
C^{(1)} & L_{y, y'-1} & 0 \\
0 & C^{(2)} & K_{y'+1, N}
\end{pmatrix} \, , \\
(\tilde K_\theta)_{\{ \setminus -N+1, y', N \setminus -N+1, y, N\}} &= \begin{pmatrix}
K_{-N+2, y-1} & 0 & 0 \\
C^{(3)} & L_{y, y'-1} & 0 \\
0 & C^{(4)} & K_{y'+1, N-1}
\end{pmatrix} \, , \\
(\tilde K_\theta)_{\{ \setminus -N+1, y' \setminus y, N\}} &=
\begin{pmatrix}
U_{-N+1, y-1} & C^{(5)} & 0 \\
0 & K_{y+1,y'-1} & C^{(6)} \\
0 & 0 & U_{y', N-1} \, ,
\end{pmatrix}
\end{equs}
where we have denoted for $i \leq j$
\begin{equs}
K_{i,j} &= \begin{pmatrix}
z_\theta(i) & \iota w_2(i)e^{-H_1} & & \\
\raise.2cm\hbox{$\iota w_2(i) e^{H_1} $} & \ddots & \ddots & \\
& \ddots & z_\theta(j-1) & \iota w_2(j-1)e^{-H_1 } \\
& & \iota w_2(j-1)e^{H_1}  & z_\theta(j)
\end{pmatrix} \, , \\
L_{i,j} &= \begin{pmatrix}
\iota w_2(i)e^{-H_1} & & \\
z_{\theta}(i+1) & \iota w_2(i+1)e^{-H_1 } &  \\
\iota w_2(i+1)e^{H_1}  & z_{\theta}(i+2) & \iota w_2(i+2)e^{-H_1 } & \\
& \ddots & \ddots & \ddots \\
& & \iota w_2(j-1)e^{H_1}  & z_\theta(j) & \iota w_2(j)e^{-H_1}
\end{pmatrix} \, , \\
U_{i,j} &= \begin{pmatrix}
\iota w_2(i) e^{H_1} & z_\theta(i+1) & \iota w_2(i+1) e^{-H_1}  \\
& \iota w_2(i+1) e^{H_1} & z_\theta(i+1) &\ddots \\
& & \iota w_2(i+2)e^{H_1} & \ddots& \iota w_2(j-1) e^{- H_1} \\
& & & \ddots & z_\theta(j)\\
& & &  & \iota w_2(j) e^{H_1}
\end{pmatrix} \, ,
\end{equs}
and $C^{(1)}, \dots, C^{(6)}$ have exactly one non-zero coefficient, in a corner. Finally, the last matrix appearing in \eqref{e:CofactorTildeK} is a lower triangular matrix, but whose diagonal contains an interval of $0$, so that its determinant is zero. Since the three other matrices are block-triangular, their determinant is equal to the product of the determinant of the diagonal blocks and the matrices $C^{(1)}, \dots, C^{(6)}$ are irrelevant. The determinant of $K_{i,j}$ was computed in the previous section (it corresponds to $D_k(i,j)$) and  it is equal to
\begin{equ}
\langle R_i^{\theta} \dots R_j^{\theta} e_1, e_1 \rangle \, \quad R_i^\theta\eqdef R^\theta_{H_2=0,i},
\end{equ}
while as for $L_{i,j}$ and $U_{i,j}$, since they are triangular, their determinant is equal to the product of their diagonal coefficients.
Altogether we obtain (using again $\iota^{2N}=-1$)
\begin{equs}\label{e:InverseKasteleynFormula1}
\det((K_\theta^{\tau_2})_{\{\setminus y' \setminus y\}} )\\& = \iota^{y'-y}\langle R_{-N+1}^{\theta} \dots R_{y-1}^{\theta} e_1, e_1 \rangle  \, \langle R_{y'+1}^{\theta} \dots R_{N}^{\theta} e_1, e_1 \rangle \prod_{r=y}^{y'-1}(e^{-  H_1} w_2(r))\\
&+\iota^{y'-y} w_2(N)^2 \langle R_{-N+2}^{\theta} \dots R_{y-1}^{\theta} e_1, e_1 \rangle \,  \langle R_{y'+1}^{\theta} \dots R_{N-1}^{\theta} e_1, e_1 \rangle \prod_{r=y}^{y'-1}(e^{-  H_1} w_2(r))\\
&+ (-1)^{\tau_2} \iota^{2N - (y' - y)} \langle R_{y+1}^{\theta} \dots R_{y'-1}^{\theta} e_1, e_1 \rangle \prod_{r\in \{-N+1,\dots,y-1\}\cup\{y',\dots,N\}}(e^{H_1}w_2(r))  \, \\
&= \iota^{y'-y}\prod_{r=y}^{y'-1}(e^{-  H_1} w_2(r)) \, \sum_{a=1,2} \langle R_{-N+1}^{\theta} \dots R_{y-1}^{\theta} e_1, e_a \rangle \, \langle R_{y'+1}^{\theta} \dots R_{N}^{\theta} e_a, e_1 \rangle  \\
&- (-1)^{\tau_2} \iota^{- (y' - y)}\langle R_{y+1}^{\theta} \dots R_{y'-1}^{\theta} e_1, e_1 \rangle
\prod_{r\in \{-N+1,\dots,y-1\}\cup\{y',\dots,N\}}(e^{H_1}w_2(r)) \, .
\end{equs}

Now, if $y' < y$, then notice that applying the transpose, we have
\begin{equ}
\det((K_\theta^{\tau_2})_{\{ \setminus y' \setminus y\}}) = \det(((K_\theta^{\tau_2})_{\{ \setminus y' \setminus y\}})^T) = \det((K_\theta^{\tau_2})^T_{\{ \setminus y \setminus y'\}}) \, .
\end{equ}
Observe that $(K_\theta^{\tau_2})^T$ is exactly the matrix $K_\theta^{\tau_2}$ but with $H_1$ changed to $-H_1$. Thus, the previous computations implies
\begin{equs} \label{e:InverseKasteleynFormula2}
&\det((K_\theta^{\tau_2})_{ \setminus y', \setminus y\}}) \\
&= \iota^{y-y'} \prod_{r=y'}^{y-1} (e^{H_1} w_2(r)) \sum_{a=1,2} \langle R_{-N+1}^{\theta} \dots R_{y'-1}^{\theta} e_1, e_a \rangle \, \langle R_{y+1}^{\theta} \dots R_{N}^{\theta} e_a, e_1 \rangle \\
&- (-1)^{\tau_2} \iota^{- (y - y')}
\langle R_{y+1}^{\theta} \dots R_{y'-1}^{\theta} e_1, e_1 \rangle    \prod_{r\in \{-N+1,\dots,y'-1\}\cup\{y,\dots,N\}}(e^{-H_1}w_2(r)) \, .
\end{equs}
Finally, if $y = y'$, it can be readily checked that
\begin{equation} \label{e:InverseKasteleynFormula3}
\det((K^{\tau_2}_\theta)_{\{ \setminus y \setminus y\}}) =   \sum_{a=1,2}
   {\langle R_{-N+1}^{\theta} \dots R_{y-1}^{\theta} e_1, e_a \rangle \, \langle R_{y+1}^{\theta} \dots R_{N}^{\theta} e_a, e_1 \rangle}\,.
\end{equation}

Formulas \eqref{e:CovKasteleynTheory}--\eqref{e:CofactorIdentity}, together with \eqref{e:InverseKasteleynFormula1} and \eqref{e:InverseKasteleynFormula2}, determine the dimer-dimer correlation function ${\rm Cov}_{L,N,\underline w}(e,e')$.
At this point we can take the infinite-volume limit:
\begin{proposition}
  \label{prop:formuloneCov}
  Let  $e=\{(x_1,y_1),(x_1+1,y_1)\},e'=\{(x_2,y_2),(x_2+1,y_2)\}$ be two horizontal edges of $T_{L, N}$ with $(x_1,y_1)$ a black vertex and $(x_2,y_2)$ a white vertex.
  Assume without loss of generality $x_2=y_2=0$ and let $y:=y_1>0,x:=x_1$. Then, for $H_1\in [0,H_c),H_2=0$,
 \begin{multline}
   \label{formuloneCov}
   \mathrm{Cov}_{\underline{w}}(e,e'):=   \lim_{L\to\infty}\lim_{N\to\infty}\mathrm{Cov}_{L, N, \underline{w}}(e,e')=\frac4{\pi^2}(-1)^{y+1}w_1(y)w_1(0)\\
   \times\int_{\theta_c}^{\pi/2}\cos\Big(x \theta-\frac\pi2(x\!\!\!\!\mod 4)\Big)
   \frac{\langle e_1,V_{<0}^\theta\rangle\langle V_{>y}^\theta,e_1\rangle\prod_{r=0}^{y-1}(e^{H_1  }w_2(r))}{\langle
M_0^\theta\dots,M^\theta_y V_{>y}^\theta,V_{<0}^\theta\rangle
   }\dd \theta \\
   \times\left[
(-1)^{y+1} \int_0^{\theta_c}\cos\Big(x \theta-\frac\pi2(x\!\!\!\!\mod 4)\Big)\frac{\langle M_1^\phi\dots,M^\phi_{y-1}e_1,e_1 \rangle}{\prod_{r=1}^{y-1}(e^{H_1}  w_2(r))}\dd \phi \right.\\\left.+\int_{\theta_c}^{\pi/2}\cos\Big(x \phi-\frac\pi2(x\!\!\!\!\mod 4)\Big) \frac{ \langle e_1,V_{<0}^\phi\rangle\langle V_{>y}^\phi,e_1\rangle\prod_{r=0}^{y-1}(e^{-H_1  }w_2(r))}{\langle
M_0^\phi\dots,M^\phi_y V_{>y}^\phi,V_{<0}^\phi\rangle
   } \dd \phi
     \right]
 \end{multline}
 where $M^\theta$ is defined in \eqref{e:Matrixnew}, $
 \theta_c=\theta_c(H_1):=\cL^{-1}(H_1)$ and, for $v\in (\R^+)^2 \setminus \{0\}$,
 \begin{equ}
   \label{eq:vR}
   V_{>y}^\theta\eqdef \lim_{N\to\infty}\frac{M^\theta_{y+1}\dots M^\theta_{N}v}{\|M^\theta_{y+1}\dots M^\theta_{N}v\|_2}, \quad
   V_{<0}^\theta\eqdef \lim_{N\to\infty}\frac{(M^\theta_{-1})^*\dots (M^\theta_{-N})^*v}{\|(M^\theta_{-1})^*\dots (M^\theta_{-N})^*v\|_2}.
 \end{equ}
For $\theta \in (0,\pi/2]$ the (random) limits $V_{>y}^\theta,V_{<0}^\theta$ exist almost surely, have positive coordinates, are independent of $v$ and are continuous in $\theta$.
\end{proposition}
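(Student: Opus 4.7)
The plan is to take first the $N\to\infty$ limit and then the $L\to\infty$ limit of the finite-volume expression obtained by combining Kasteleyn's identity \eqref{e:CovKasteleynTheory} with the Fourier inversion \eqref{e:InverseKasteleynFourier}, the cofactor formula \eqref{e:CofactorIdentity}, the explicit determinant \eqref{detKtauk}, and the cofactor computations \eqref{e:InverseKasteleynFormula1}--\eqref{e:InverseKasteleynFormula3}. This yields a $\tau$-weighted sum of single and double Fourier sums over $k\in F_{\tau_1}$ of ratios of the form $\det((K^{\tau_2}_{\pi k/L})_{\setminus y'\setminus y})/\det(K^{\tau_2}_{\pi k/L})$ and cross-products thereof.

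For the $N\to\infty$ step I would fix $\theta=\pi k/L$, work in the $R^\theta=M^{\pi/2-\theta}$ parametrisation, and restrict to $\pi/2-\theta\in(0,\pi/2]$, where Lemma \ref{lem:GeneralLyap} gives $\cL(\pi/2-\theta)>0$. Corollary \ref{cor:map} applied to the i.i.d. sequence $\{M^{\pi/2-\theta}_y\}_{y\in\Z}$ then shows that the vectors $V^{\pi/2-\theta}_{>y}$ and $V^{\pi/2-\theta}_{<0}$ of \eqref{eq:vR} exist almost surely, are independent of $v\in(\R^+)^2\setminus\{0\}$, and have strictly positive coordinates since the cone $\cH$ appearing before \eqref{matrixT} is preserved by the matrices; their continuity in $\theta$ on $(0,\pi/2]$ follows from the uniform exponential contraction rate provided by Lemma \ref{lem:duedischi}. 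Dividing numerator and denominator of each ratio by the dominant exponential $e^{2N\max(\cL(\pi/2-\theta),H_1)}$, Lemmas \ref{lem:alternative} and \ref{lemma:trace} combined with convergence of normalised products to the stable directions yield: when $\cL(\pi/2-\theta)>H_1$ the trace summand in \eqref{detKtauk} dominates and the pointwise limit is a bilinear pairing with $V^{\pi/2-\theta}_{<0}$ and $V^{\pi/2-\theta}_{>y}$ (matching, after the change of variable $\theta\mapsto\pi/2-\theta$, the first and third integrands in \eqref{formuloneCov}), while when $\cL(\pi/2-\theta)<H_1$ the $\cosh(2NH_1)\prod w_2$ summand dominates, killing those contributions and leaving the middle-product pairing $\langle M^\phi_1\cdots M^\phi_{y-1}e_1,e_1\rangle$ multiplied by the appropriate $\prod e^{\pm H_1}w_2(r)$ factor, which is the second integrand in \eqref{formuloneCov}.

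The mechanism that produces the exact shape of \eqref{formuloneCov} from the $2\times 2$ determinant and the subtraction in \eqref{e:CovKasteleynTheory} is a sign cancellation across the $\tau$-average: the $(-1)^{\tau_2}$ factor appearing both in \eqref{detKtauk} and in \eqref{e:InverseKasteleynFormula1}--\eqref{e:InverseKasteleynFormula2} ensures that mode by mode the "diagonal" term $K^{-1}(\rb_1,\rw_1)K^{-1}(\rb_2,\rw_2)$ present in the naive expansion of the $2\times 2$ determinant is cancelled by $\Langle K^{-1}(\rb_1,\rw_1)\Rangle\Langle K^{-1}(\rb_2,\rw_2)\Rangle$, leaving only the off-diagonal cross-term $K^{-1}(\rb_1,\rw_2)K^{-1}(\rb_2,\rw_1)$ with the correct sign; this is the torus counterpart of the automatic diagonal cancellation available on planar graphs. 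For the $L\to\infty$ limit, the Riemann sum $\tfrac1L\sum_{k\in F_{\tau_1}}$ converges to $\tfrac1\pi\int_{-\pi/2}^{\pi/2}\dd\theta$ by equicontinuity and boundedness of the integrand on compacts of $(0,\pi/2]$, complex-conjugation symmetry $L(\bar z)=L(z)$ folds the integral onto $[0,\pi/2]$, and the phases $\iota^{y-y'}$ combine with the oscillations $e^{\iota\pi k(x-x')/L}$ to produce $\cos(x\theta-(\pi/2)(x\!\!\mod 4))$, with the overall $(-1)^{y+1}$ coming from $\iota^y/\iota^{2N}$ and the fact that $N$ is odd; the change of variable $\theta\mapsto\pi/2-\theta$ finally rewrites the $R^\theta$ matrices as $M^\theta$ and places the crossover at $\theta_c=\cL^{-1}(H_1)$. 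I expect the main obstacle to be justifying the $N\to\infty$ limit uniformly in $k$ near the crossover $\theta=\theta_c$, where $\cL(\pi/2-\theta)$ and $H_1$ are comparable and the two exponentials no longer cleanly separate; a convenient workaround is to keep $N,L$ finite and isolate a shrinking $\theta$-neighbourhood of $\theta_c$ whose contribution is bounded via Lemma \ref{lemma:trace} by a vanishing fraction of the total mass, before passing to the joint limit.
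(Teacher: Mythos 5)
Your high-level plan — Kasteleyn plus Fourier block-diagonalisation, cofactor identities, the crossover at $\theta_c$, the Riemann-sum passage to an integral — matches the paper's route. But there are two genuine gaps.

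The first and more serious one is the claim that the diagonal part of the $2\times 2$ determinant, $K^{-1}_\tau(\rb_1,\rw_1)K^{-1}_\tau(\rb_2,\rw_2)$, is cancelled \emph{mode by mode} by $\Langle K^{-1}_\tau(\rb_1,\rw_1)\Rangle\Langle K^{-1}_\tau(\rb_2,\rw_2)\Rangle$ thanks to the $(-1)^{\tau_2}$ signs. This is not true at finite volume: $\Langle\cdot\Rangle$ is a $\tau$-weighted average with weights $c(\tau)\det(K_\tau)/Z_{L,N}$, and in general $\Langle AB\Rangle \neq \Langle A\Rangle\Langle B\Rangle$ — there is no sign miracle making them equal for each fixed $(L,N)$ and each Fourier mode. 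The paper explicitly emphasises (in the remark after \eqref{e:InverseKasteleyn}) that on the torus the cancellation only occurs in the thermodynamic limit. Its actual argument is to compute $\lim_{L\to\infty}\lim_{N\to\infty}$ of the two quantities \emph{separately}, show each equals the same product of integrals (the key point being that the limit is $\tau$-independent, so the $\Langle\cdot\Rangle$-average can be undone using $\Langle 1\Rangle=1$ and \eqref{eq:factor2}), and only then subtract. You need to supply this argument rather than assert an exact finite-volume cancellation.

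The second gap is the Fourier mode $k=L/2$, i.e. $\theta=\pi/2$ in the $R^\theta$ parametrisation, equivalently $M^{0}$ with vanishing sine. You restrict to $\pi/2-\theta\in(0,\pi/2]$ ``where $\cL>0$'', but this silently drops exactly the mode where the Lyapunov exponent is zero and Corollary \ref{cor:map} gives no contraction. There the trace term in $D(\theta,\tau)$ does \emph{not} dominate the $(-1)^{\tau_2}\cosh(\cdot)\prod w_2$ term, the matrices $R^{\pi/2}$ are antidiagonal, and the numerators acquire a very different structure (they vanish for $y$ even, but for $y$ odd they are of size $E^2/(E\pm O)^2$ and must be shown to contribute only $O(1/L^2)$ after the $\Langle\cdot\Rangle$-average). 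This requires a dedicated estimate, using the explicit formula $\det(K^{\tau_2=1}_{L/2})=(E-O)^2$ versus $\det(K^{\tau_2=0}_{L/2})=(E+O)^2$ and a comparison of $\det K_{(0,1)}$ with $\det K_{(0,0)}$, which your sketch does not touch. The crossover at $\theta_c$ that you flag as the main obstacle is in fact easier (for $H_1>0$ the paper bounds the single bad mode $k\approx L\bar\theta/\pi$ by comparison with the $\tau_2=0$ determinant), whereas the $k=L/2$ mode is present for every $H_1$ including $H_1=0$ and is where a naive dominated-convergence argument breaks down.
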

In particular, when $H_1 = H_2 = 0$ \eqref{formuloneCov} reduces to
\begin{multline}
   \label{formuloneCovbeta0}
   \mathrm{Cov}_{\underline{w}}(e,e'):=   \lim_{L\to\infty}\mathrm{Cov}_{L, N, \underline{w}}(e,e')=\frac4{\pi^2}
(-1)^{y+1}w_1(y)w_1(0)\\
   \times\left(\int_{0}^{\pi/2}\cos\Big(x \theta-\frac\pi2(x\!\!\!\!\mod 4)\Big)
   \frac{w_2(0)\dots w_2(y-1)\langle e_1,V_{<0}^\theta\rangle\langle V_{>y}^\theta,e_1\rangle}{\langle
M_0^\theta\dots,M^\theta_y V_{>y}^\theta,V_{<0}^\theta\rangle
   }\dd \theta \right)^2.
 \end{multline}

 \begin{proof}[of Proposition \ref{prop:formuloneCov}]
   Note that $M^\theta=\M^{2\sin(\theta)}$ with the notation \eqref{e:Matrixnewz}.
The existence of the limits \eqref{eq:vR} and their independence of $v\in (\R^+)^2$ follows directly from Corollary \ref{cor:map}.
As for continuity: from the same corollary, we know that $T_{M^\theta_{y+1}}\dots T_{M^\theta_{N}}(\H)$ shrinks exponentially fast in $N$  to a limit point $z_\theta$, uniformly in the disorder realization and in $\theta\ge \theta_0>0$. Take $N_0=N_0(\epsilon)$ large enough so that the diameter of $T_{M^\theta_{y+1}}\dots T_{M^\theta_{N}}(\H)$ is smaller than $\eps/2$ for $\theta\ge \theta_0,N\ge N_0$. On the other hand, for fixed $N$ the range $T_{M^\theta_{y+1}}\dots T_{M^\theta_{N}}(\H)$ is continuous in $\theta$. It follows that $|z_{\theta}-z_{\theta'}|\le \eps$ if $\theta,\theta'>\theta_0$ are close enough. This is equivalent to continuity of $\theta\mapsto V_{>y}^\theta$.

   As for \eqref{formuloneCov}, we discuss separately the cases $H_1=0$ and $H_1>0$, that present some technical differences. In both cases, the starting point is \eqref{e:CovKasteleynTheory}, where $e=(\rb_1,\rw_1),e'=(\rb_2,\rw_2)$.

   \underline{Proof of  \eqref{formuloneCov} for $H_1=0$.}
   First we show that
   \begin{equation}
     \label{f0}
     \lim_{L\to\infty}\lim_{N\to\infty}\prod_{j=1,2} \LLangle K^{-1}_\tau(\rb_j,\rw_j) \RRangle=     \lim_{L\to\infty}\lim_{N\to\infty}\LLangle \prod_{j=1,2} K^{-1}_\tau(\rb_j,\rw_j) \RRangle
   \end{equation}
   by computing separately the two limits.
   We start with $\LLangle K^{-1}_\tau(\rb_1,\rw_1) \RRangle$.
   Using Eqs. \eqref{e:InverseKasteleyn}, \eqref{e:InverseKasteleynFourier},
\eqref{e:CofactorIdentity} and \eqref{e:InverseKasteleynFormula3}
   we see that (letting $\theta:=\frac{\pi k}L$)
   \begin{equs}
     \label{f1}
     K^{-1}_\tau(\rb_1,\rw_1) &= \frac 1L \sum_{k\in F_{\tau_1}}e^{-\iota \theta}
     \frac{\sum_{a=1}^2\langle R^\theta_{-N+1}\dots R_{y-1}^\theta e_1,e_a\rangle\langle R^\theta_{y+1}\dots R_{N}^\theta e_a,e_1\rangle}{D(\theta,\tau)} \\
     \mbox{where } D(\theta,\tau) &= \sum_{a=1}^2 \langle R^\theta_{-N+1} \dots R_N^\theta e_a,e_a\rangle + 2(-1)^{\tau_2}\prod_{r=-N+1}^Nw_2(r).
     \label{Dthetatau}
   \end{equs}
   We know from Lemma \ref{lem:GeneralLyap} that ${\rm Lyap}(R^\theta)>0$ except if $\cos(\theta)=0$, that is, if  $k=L/2$. The term $k=L/2$ (that belongs to $F_{\tau_1}$  only if $\tau_1=0$) will be treated separately later.
   For $k \ne (L/2)$, the term proportional to $(-1)^{\tau_2}$ in $D(\theta,\tau)$ is almost surely $\exp(o(N))$ by the law of large numbers and since $\log w_2$ is centered, while the other grows almost surely as $\Theta(1) \, e^{2 N \, {\rm Lyap}(R^\theta)}$ since it is easy to check that for instance 
   \begin{equ}
   \langle R^\theta_{-N+1} \dots R_N^\theta e_a, e_a\rangle \gtrsim \theta^4 \, \|(R^\theta_{-1})^*\dots(R^\theta_{-N+1})^*\| \|R^\theta_1\dots R^\theta_N\| \, .
   \end{equ}
   Therefore, the quotient in \eqref{f1} for $k \ne L/2$ is almost surely equivalent, as $N \to +\infty$, to
   \begin{eqnarray}\label{f1-}
      \frac{\sum_{a=1}^2\langle  e_1,v^{a,N,\theta}_{<y}\rangle\langle v^{a,N,\theta}_{>y},e_1\rangle
      \|(R^\theta_{y-1})^*\dots(R^\theta_{-N+1})^* e_a \|
      \|R^\theta_{y+1} \dots R^\theta_N e_a \|}{\sum_{a=1}^2 \langle R^\theta_{y} v^{a,N,\theta}_{>y},v^{a,N,\theta}_{<y}\rangle \|(R^\theta_{y-1})^*\dots(R^\theta_{-N+1})^* e_a \| \|R^\theta_{y+1} \dots R^\theta_N e_a \|},
   \end{eqnarray}
   where here and in the rest of the proof we write $\|\cdot\|$ for the Euclidean norm and
   \begin{eqnarray}
     \label{eq:f2}
     v^{a,N,\theta}_{<y}:=\frac{ (R^\theta_{y-1})^*\dots(R^\theta_{-N+1})^* e_a }{\|(R^\theta_{y-1})^*\dots(R^\theta_{-N+1})^* e_a \|},\qquad
     v^{a,N,\theta}_{>y}:=\frac{R^\theta_{y+1}\dots R^\theta_N e_a}{\|R^\theta_{y+1}\dots R^\theta_N e_a\|}  .
   \end{eqnarray}
   All terms are non-negative, thus the ratio \eqref{f1-} belongs to the interval
   \[
     \left[
\min_{a=1,2} \frac{\langle  e_1,v^{a,N,\theta}_{<y}\rangle\langle v^{a,N,\theta}_{>y},e_1\rangle}{\langle R^\theta_{y} v^{a,N,\theta}_{>y},v^{a,N,\theta}_{<y}\rangle},\max_{a=1,2}\frac{\langle  e_1,v^{a,N,\theta}_{<y}\rangle\langle v^{a,N,\theta}_{>y},e_1\rangle}{\langle R^\theta_{y} v^{a,N,\theta}_{>y},v^{a,N,\theta}_{<y}\rangle}
       \right].
     \]
    Since $v^{a,N,\theta}_{\gtrless y}\to V^{\pi/2-\theta}_{\gtrless y}$ as $N\to\infty$ (recall
    that $R^\theta=M^{\frac\pi2-\theta}$), the matrix $R^\theta$ have positive top left entry, and $\theta \mapsto R^\theta$ is even (and therefore $\theta\mapsto V^\theta_{\gtrless y}$ as well), the sum in the r.h.s. of \eqref{f1} (excluding the possible value $k=L/2$) converges in the  limit  $N\to\infty$ to
     \begin{eqnarray}
       \label{f3}
       \frac1L \sum_{k\in F_{\tau_1}\setminus\{L/2\}}\cos(\theta) \, \frac{\langle  e_1,V^{\pi/2-\theta}_{<y}\rangle\langle V^{\pi/2-\theta}_{>y},e_1\rangle}{\langle R^\theta_{y} V^{\pi/2-\theta}_{>y},V^{\pi/2-\theta}_{<y}\rangle}  \end{eqnarray}
At this point we can let $L\to\infty$: since the summand in \eqref{f3} is upper bounded by $1/(2 w_1(y))$ and the weights are bounded away from zero, the sum tends to the integral
\begin{eqnarray}
  \label{f4}
\frac2\pi\int_0^{\pi/2}\sin(\theta)\frac{\langle  e_1,V^{\theta}_{<y}\rangle\langle V^{\theta}_{>y},e_1\rangle}{\langle M^\theta_{y} V^{\theta}_{>y},V^{\theta}_{<y}\rangle}\dd \theta.
\end{eqnarray}
     Now, we consider the special case $k=L/2$. In this case, $\theta=\pi/2$ and the matrix $R^\theta$ is  antidiagonal. Therefore, a product of copies of matrices $R^\theta$ is diagonal or antidiagonal according to the parity of the number of matrices and it is immediately seen that the numerator in \eqref{f1} vanishes identically.
     Wrapping up: we have that $\lim_L\lim_N \det (K^{-1}_\tau(\rb_1,\rw_1))$ equals the r.h.s. of \eqref{f4} for every $\tau$. Since the ratios $(\det K_\tau)/Z_{L,N}$ are bounded by $1$ as noted in \eqref{eq:factor2} and $\LLangle 1\RRangle=1$, we see that
     $\lim_L\lim_N  \LLangle K^{-1}_\tau(\rb_1,\rw_1) \RRangle$ is given by \eqref{f4}. The limits of $\LLangle K^{-1}_\tau(\rb_2,\rw_2) \RRangle$ and
     $\LLangle \prod_{j=1,2} K^{-1}_\tau(\rb_j,\rw_j) \RRangle$ are computed similarly (the former equals \eqref{f4} with $y$ replaced by $0$, the latter by the product of the two integrals), and \eqref{f0} follows.

     At this point, to prove the proposition for $H_1=0$ it remains to prove that the limit of  $\LLangle  K^{-1}_\tau(\rb_1,\rw_2)K^{-1}_\tau(\rb_2,\rw_1) \RRangle$ is $(-1)^y\frac
     {4}{\pi^2}$ times the squared integral in \eqref{formuloneCovbeta0}.
Using Eqs. \eqref{e:InverseKasteleyn}, \eqref{e:InverseKasteleynFourier},
\eqref{e:CofactorIdentity}, \eqref{e:InverseKasteleynFormula1} and  \eqref{e:InverseKasteleynFormula2} we see that (with $D(\theta,\tau)$ as in \eqref{Dthetatau})
\begin{equs}
  K^{-1}_\tau(\rb_1,\rw_2)K^{-1}_\tau&(\rb_2,\rw_1)= (-1)^y
\frac1{L^2}\sum_{k\in F_{\tau_1}}e^{\iota \frac{\pi k}L(x_1-x_2)}\frac{N(\frac{\pi k}L,\tau)}{D(\frac{\pi k}L,\tau)}\sum_{p\in F_{\tau_1}}e^{\iota \frac{\pi p}L(x_2-x_1)}\frac{N(\frac{\pi p}L,\tau)}{D(\frac{\pi p}L,\tau)}
\\
 N(\theta,\tau) &= w_2(0)\dots w_2(y-1)\sum_{a=1}^2 \langle R^\theta_{-N+1}\dots R^\theta_{-1}e_1,e_a\rangle \langle R^\theta_{y+1}\dots R^\theta_Ne_a,e_1\rangle \\
&\quad - (-1)^{\tau_2} (-1)^y \langle R^\theta_1\dots R^\theta_{y-1}e_1,e_1
\rangle\prod_{r=-N+1}^{-1}w_2(r)\prod_{r=y}^N w_2(r).  \label{absval}
\end{equs}
As in the computation of $\LLangle K^{-1}_\tau(\rb_j,\rw_j) \RRangle$, whenever $\theta\ne \pi/2$ we can neglect the term proportional to $(-1)^{\tau_2}$ in $D(\theta,\tau)$, but also in $N(\theta, \tau)$ by the same argument. Then, proceeding as above, one easily concludes that the limit $\lim_{L\to\infty} \lim_{N\to\infty}$ of \eqref{absval} with $k,p$ restricted to $F_{\tau_1}\setminus\{L/2\}$ is $(-1)^y\frac
     {4}{\pi^2}$ times the squared integral in \eqref{formuloneCovbeta0}.
     It remains to show that the contribution from the terms where one among $k,p$ equals $L/2$ in \eqref{absval} is negligible. Let us show this for instance when  $k=p=L/2$ (the case $p=L/2\ne k$ is similar and actually simpler).
     As noted above, even/odd products of $R^{\pi/2}$ are diagonal/antidiagonal. Then, it is easily seen that $N(\pi/2,\tau)=0$ whenever $y$ is even.
     If instead $y$ is odd, then an elementary computation gives
     \begin{equs}
       D(\pi/2,\tau) = (E+(-1)^{\tau_2}O)^2 \, , \quad
       &N(\pi/2,\tau) = E (E+(-1)^{\tau_2}O) Q \, , \quad \mbox{where}\\
       E:=\prod_{-N<r\le N:r\in 2\N}w_2(r), \quad O&:=\prod_{-N<r\le N:r\in 2\N+1}w_2(r) ,\quad Q:=\frac{w_2(1)w_2(3)\dots w_2(y-2)}{w_2(0)w_2(2)\dots w_2(y-1)}
     \end{equs}
     and therefore in the r.h.s. of \eqref{absval} we have the term
     \begin{eqnarray}
       \frac1{L^2} \, Q^2 \frac{E^2}{(E+(-1)^{\tau_2}O)^2}.
     \end{eqnarray}
     Since we are computing $\LLangle K^{-1}_\tau(\rb_1,\rw_2)K^{-1}_\tau(\rb_2,\rw_1)\RRangle$,  we get the linear combination
     \begin{eqnarray}
       \label{lincomb}
       \frac1{2L^2}\sum_\tau c(\tau) \frac{\det(K_\tau)}{Z_{L,N}}Q^2\frac{E^2}{(E+(-1)^{\tau_2}O)^2}.
     \end{eqnarray}
     In view of \eqref{eq:factor2},
     the only possibly problematic term is $\tau_2=1$,$\tau_1=0$ (the latter since otherwise the value $k=L/2$ does not belong to $F_{\tau_1}$), while the others tend to zero as $L\to\infty$.
     On the other hand, using the notation \eqref{detKtauk} and noticing in particular that $\det(K_{L/2}^{\tau_2 = 1}) = (E - O)^2$, that $|\det(K_k^{\tau_2 = 1})| \leq \det(K_k^{\tau_2 = 0})$ and that $\det(K_{L/2}^{\tau_2 = 1}) = (E + O)^2$, we deduce
     \begin{equs}
       \frac{\det(K_{0,1})}{Z_{L,N}} \frac{E^2}{(E - O)^2} &= \frac{E^2}{Z_{L,N}} \prod_{k \in F_1 \setminus \{L/2\}}\det (K^{\tau_2=1}_k) \\
       &\le \frac{(E+O)^2}{Z_{L,N}} \prod_{k\in F_1\setminus \{L/2\}} \det (K^{\tau_2=0}_k) \\
       &= \frac{\det (K^{\tau_2=0}_k)}{Z_{L,N}} =\frac{\det(K_{0,0})}{Z_{L,N}} \le 1.
     \end{equs}
     In conclusion, all the terms in \eqref{lincomb} tend to zero as $L\to\infty$.

   \underline{Proof of  \eqref{formuloneCov} for $H_1\in(0,H_c)$.}
   This is similar to the $H_1=0$ case, except for the issue of the special values of $k$ that might cause the denominators to vanish. Therefore, we point out only the relevant differences.
   Let us prove that the limit of  $\LLangle  K^{-1}_\tau(\rb_1,\rw_2)K^{-1}_\tau(\rb_2,\rw_1) \RRangle$ is $(-1)^y\frac
   {4}{\pi^2}$ times the product of two integrals in \eqref{formuloneCov}.
   An analogous (and, actually, simpler) proof gives that the limit of
   $\LLangle  K^{-1}_\tau(\rb_1,\rw_1)\RRangle \LLangle K^{-1}_\tau(\rb_2,\rw_2) \RRangle-\LLangle  K^{-1}_\tau(\rb_1,\rw_1)K^{-1}_\tau(\rb_2,\rw_2) \RRangle$
   is zero. Using Eqs. \eqref{e:InverseKasteleyn}, \eqref{e:InverseKasteleynFourier},
   \eqref{e:CofactorIdentity}, \eqref{e:InverseKasteleynFormula1} and  \eqref{e:InverseKasteleynFormula2} we see that
   \begin{equs}
  K^{-1}_\tau(\rb_1,\rw_2)&K^{-1}_\tau(\rb_2,\rw_1)=
\frac{(-1)^y}{L^2}\sum_{k\in F_{\tau_1}}e^{\iota \frac{\pi k}L(x_1-x_2)}\frac{N_+(\frac{\pi k}L,\tau)}{D(\frac{\pi k}L,\tau)}\sum_{p\in F_{\tau_1}}e^{\iota \frac{\pi p}L(x_2-x_1)}\frac{N_-(\frac{\pi p}L,\tau)}{D(\frac{\pi p}L,\tau)}
 \, , \\
N_\pm(\theta,\tau) &= e^{\pm H_1 y}w_2(0)\dots w_2(y-1)\sum_{a=1}^2 \langle R^\theta_{-N+1}\dots R^\theta_{-1}e_1,e_a\rangle \langle R^\theta_{y+1}\dots R^\theta_Ne_a,e_1\rangle\\
&- (-1)^{\tau_2} (-1)^y \langle R^\theta_1\dots R^\theta_{y-1}e_1,e_1
  \rangle\prod_{r=-N+1}^{-1}(e^{\mp H_1}w_2(r))\prod_{r=y}^N (e^{\mp H_1}w_2(r))
  \, , \\
  D(\theta,\tau) &= \sum_{a=1}^2 \langle R^\theta_{-N+1}\dots R_N^\theta e_a,e_a\rangle+2(-1)^{\tau_2}\cosh(2NH_1)\prod_{r=-N+1}^Nw_2(r).
\end{equs}
The two terms in $D(\theta,\tau)$ grow like $e^{2N {\rm Lyap}(R^\theta)}$ and $e^{2N H_1}$, respectively and the two exponential rates coincide for $\theta=\bar\theta:=\frac\pi2-\theta_c(H_1)$.
Consider first the case where none of the $F_{\tau_1}$ contains $k$ such that $k\pi/L=\bar\theta$. In this case, up to negligible (exponentially small in $N$) error terms,
\begin{equs}
  \sum_{k\in F_{\tau_1}} &e^{\iota \frac{\pi k}L(x_1-x_2)}\frac{N_+(\frac{\pi k}L,\tau)}{D(\frac{\pi k}L,\tau)} \\
  &\approx \prod_{r=0}^{y-1}(e^{H_1}w_2(r)) \times \sum_{\substack{k=\frac{L\theta}\pi\in F_{\tau_1}:\\|\theta|<\bar \theta}}
  e^{\iota \theta(x_1-x_2)}\frac{\sum_{a=1}^2 \langle R^\theta_{-N+1}\dots R^\theta_{-1}e_1,e_a\rangle \langle R^\theta_{y+1}\dots R^\theta_Ne_a,e_1\rangle}{\sum_{a=1}^2 \langle R^\theta_{-N+1}\dots R_N^\theta e_a,e_a\rangle} \\
  \sum_{p\in F_{\tau_1}} &e^{\iota \frac{\pi p}L(x_2-x_1)}\frac{N_-(\frac{\pi p}L,\tau)}{D(\frac{\pi p}L,\tau)} \\
  &\approx \sum_{\substack{p=\frac{L\phi}\pi\in F_{\tau_1}:\\|\phi|<\bar \theta}}
   e^{\iota \phi(x_2-x_1)}\frac{\prod_{r=0}^{y-1}(e^{-H_1 }w_2(r))\sum_{a=1}^2 \langle R^\phi_{-N+1}\dots R^\phi_{-1}e_1,e_a\rangle \langle R^\phi_{y+1}\dots R^\phi_Ne_a,e_1\rangle}{\sum_{a=1}^2 \langle R^\phi_{-N+1}\dots R_N^\phi e_a,e_a\rangle}
  \\
  &\qquad + (-1)^{y+1} \sum_{\substack{p=\frac{L\phi}\pi\in F_{\tau_1}:\\|\phi|>\bar \theta}} e^{\iota \phi(x_2-x_1)}\frac{\langle R_1^\phi\dots,R^\phi_{y-1}e_1,e_1 \rangle}{\prod_{r=1}^{y-1}(e^{H_1}  w_2(r))}.
\end{equs}
At this point, one proceeds like in Eqs. \eqref{f1-}-\eqref{f4}, and \eqref{formuloneCov} follows (recall that $\theta\mapsto R^\theta$ is even and that $R^{\pi/2-\theta}=M^\theta,\pi/2-\bar\theta=\theta_c(H_1)$).

It remains to consider the case where $\tau_2=1$ and $\pm L\bar\theta/\pi\in F_{\tau_1}$, so that  $D(\pm \bar\theta,\tau)$ may potentially grow exponentially slower than either of the two individual terms. Specifically, we consider the case where $k=p=L\bar\theta/\pi$, the cases where only one of the two equals $\pm \bar\theta/\pi$ being similar and easier.
We want to show that (with $\tau=(\tau_1,1)$)
\begin{eqnarray}
  \label{fsometh}
  \frac1{L^2}\frac{\det(K_{\tau})}{Z_{L,N}} \frac{N_+(\bar\theta,\tau)N_-(\bar\theta,\tau)}{D(\bar \theta,\tau)^2}
\end{eqnarray}
tends to zero in the limit $\lim_{L\to\infty}\lim_{N\to\infty}$. In fact, this equals
\begin{multline}
  \frac1{L^2}\frac{\prod_{k\in F_{\tau_1}:k
      \pi/L\ne\pm\bar\theta}\det(K^{\tau_2=1}_{k})}{Z_{L,N}}N_+(\bar\theta,\tau)N_-(\bar\theta,\tau)\\
  \le \frac1{L^2}\frac{\det(K_{(\tau_1,0)})}{Z_{L,N}}\frac{N_+(\bar\theta,(\tau_1,0))N_-(\bar\theta,(\tau_1,0))}{D(\bar \theta,(\tau_1,0))^2}
\end{multline}
The last ratio is easily seen to be uniformly bounded in $N$ and $L$.
Together with \eqref{eq:factor2}, this implies that \eqref{fsometh} tends indeed to zero.
 \end{proof}

\section{Proof of the main results} \label{s:proofs}

First, notice that $\cL(\theta, H_2) = L(2 \sin(\theta + \iota H_2)) = L(\iota e^{H_2} e^{- \iota \theta} - \iota e^{-H_2} e^{\iota \theta})$, and that $\theta \in [0, \pi/2] \mapsto 2 \sin(\theta + \iota H_2)$ describes a quarter of ellipse starting at $2\iota \sinh(H_2)$ and ending at $2\cosh(H_2)$. Thus Lemmas \ref{lem:GeneralLyap} and \ref{lem:miracolo} show that the function $\theta \in [0,\pi/2] \to \cL(\theta, H_2)$ is non-negative, strictly increasing, continuous, and it is real analytic on $(0,\pi/2]$.

\subsection{Proof of Theorem \ref{th:F}}

The claim immediately follows from \eqref{formulaF0} together with $\cL(\theta) = L(2\sin(\theta))$ (using the notation \eqref{e:Matrixnewz}) and the asymptotic \eqref{eq:Lreax} for $L$ near zero. In the case where the law of $\log w_2$ admits a density, the sharper \eqref{eq:Fessential2} follows from \eqref{eq:sharper} together with \eqref{suegiu}.

\subsection{Proof of Theorem \ref{th:Fbetane0}}

Notice that
\begin{equation}
  \Delta(H_2) = L(2\iota\sinh(H_2)) = {\rm Lyap}(V^{2\sinh(H_2)}) \, ,
\end{equation}
where $L(\M^{\iota x}) = \mathrm{Lyap}(V^x)$ by \eqref{Vx}. Then, the statement follow immediately from Lemma \ref{lem:GeneralLyap}.

\subsection{Proof of Theorem \ref{th:32}}

Using the general properties of $\cL(\cdot, H_2)$, we can write for $H_1 \in (\Delta(H_2), H_c(H_2))$
\begin{eqnarray}
  F(H_1,H_2) &= \frac{H_1}2+
  \frac1\pi\int_{\cL^{-1}(H_1,H_2)}^{\pi/2} (\cL(\theta,H_2)-H_1) \, \dd \theta \, , \\
  &= \frac{H_1}2 +
  \frac1\pi \int_{H_1}^{H_c(H_2)} (\pi/2 - \cL^{-1}(y, H_2) ) \, \dd y \, , \label{eq:strpart}
\end{eqnarray}
By Lemma \ref{lem:miracolo} we have the asymptotic, as $\theta \nearrow \pi/2$,
\begin{equ}
\cL(\pi/2, H_2) - \cL(\theta, H_2) \sim c \, (\pi/2 - \theta)^2 \, ,
\end{equ}
for some constant $c = c(H_2) > 0$. Hence, we deduce the asymptotic, as $y \nearrow H_1$,
\begin{eqnarray}
\pi/2 - \cL^{-1}(y, H_2) \sim c^{-1/2}  \, (H_c(H_2) - y)^{1/2} \, ,
\end{eqnarray}
which, together with \eqref{eq:strpart}, gives, as $H_1 \nearrow H_c(H_2)$
\begin{equ}
F(H_1, H_2) - \frac{H_1}{2} \sim \frac{2 c^{-1/2}}{3 \pi} \, (H_c(H_2) - H_1)^{3/2} \, ,
\end{equ}
and concludes the proof of \eqref{32dis}.

\subsection{Proof of Theorem \ref{th:corrdecay}}
\label{sec:corrdecay}

At exponential scale, the matrix elements of  $M^\theta_0\dots M_y^\theta$ grow like $e^{y \cL(\theta)}$. If we could perform this replacement in \eqref{formuloneCov} and \eqref{formuloneCovbeta0} then the proof of the Theorem would be rather direct. The problem is that we would need a quantitative, finite-size version of this, and the above asymptotic is unlikely to hold uniform in $\theta$ for $\theta\approx 0$, which is especially relevant for \eqref{formuloneCovbeta0}. Therefore, we have to follow a more probabilistic approach instead.

From here on,  $C$ denotes a positive constant that  depends only on the law of weights, and whose value may change from line to line.

For simplicity we assume that  $y$ is odd and we write $y=2n+1$.
  It is convenient to define
  \begin{eqnarray}
    \label{eq:NiMi}
     T^\theta_i=M^\theta_{2i}M^\theta_{2i+1},
  \end{eqnarray}
  so that
 $M_0^\theta\dots
  M_y^\theta=T_0^\theta\dots T_n^\theta$ (if $y$ were even instead, we would have a product of $y/2$ matrices $T^\theta_i$ times an additional matrix $M^\theta_{y}$, which would only make formulas more cumbersome).
By \eqref{suegiu} and $M^\theta=\M^{2\sin(\theta)}$ we have the bounds (intended to hold entry-wise, for some universal constant $C\in(0,\infty)$ independent of the disorder realization and of $\theta\in[0,\pi/2]$)
  \begin{eqnarray}
    \label{eq:Ni}
    \begin{pmatrix}
      w_2(2i)^2& C^{-1}\theta\\
      C^{-1}\theta & w_2(2i+1)^2 \end{pmatrix}\le T^\theta_i\le (1+C\theta^2) \begin{pmatrix}
      w_2(2i)^2& C\theta\\
      C\theta & w_2(2i+1)^2
    \end{pmatrix}.
  \end{eqnarray}
  We define the random walks $S_+,S_-,S$ as $S_+(-1)=S_-(-1)=S(-1)=0$ and
  \begin{eqnarray}
    S_+(i)=\sum_{j=0}^{i} \log w_2(2j),\quad
    S_-(i)=\sum_{j=0}^{i} \log w_2(2j+1),\quad
  S(i)=S_+(i)-S_-(i).
  \end{eqnarray}
  The three random walks have centered and bounded increments, with finite and non-zero variance. Also, $S_+,S_-$ are independent. For convenience, we set
  \begin{eqnarray}
    \label{eq:Ds}
    \Delta S_\pm(i,j)\eqdef S_\pm(j)-S_\pm(i), \;\Delta S(i,j)\eqdef S(j)-S(i), \quad j\ge i.
  \end{eqnarray}
With this notation,
\begin{eqnarray}
  \label{eq:prodw2}
w_2(0)\dots w_2(y-1)= e^{S_+(n)+S_-(n)}/w_2(2n+1)\asymp e^{S_+(n)+S_-(n)}.
\end{eqnarray}
Because of Lemma \ref{lem:duedischi}, the components of the vectors $V^\theta_{<0},V^\theta_{>y}$ satisfy the bounds
\begin{eqnarray}
  \label{eq:Vnosmall}
  C^{-1} \theta\le (V^\theta_{<0})_{a}\le 1, \quad C^{-1}\theta\le (V^\theta_{>y})_{a}\le 1, \quad a=1,2.
\end{eqnarray} Also, from \eqref{eq:Ni},
\begin{eqnarray}
  \label{eq:Ni2}
  \theta^2\lesssim \frac{(T_0^\theta\dots T_n^\theta)_{ij}}{(T_0^\theta\dots T_n^\theta)_{i'j'}}\lesssim \theta^{-2}, \quad  i,j,i',j'\in \{1,2\},
\end{eqnarray}
uniformly in $n$ and in the disorder realization.

Since both the
vectors $V^\theta_{<0},V^\theta_{>y}$ and the matrices $T_i^\theta$
have non-negative entries,
\begin{equs}
  \label{tengo11}
  \langle T_0^\theta\dots T_n^\theta V_{>y}^\theta,&V_{<0}^\theta\rangle \ge \langle e_1,V_{<0}^\theta\rangle \langle V_{>y}^\theta,e_1\rangle \langle T_0^\theta\dots T_n^\theta e_1,e_1\rangle\\
  \label{tengoTr}
 \theta^4 \lesssim &\frac{\langle T_0^\theta\dots T_n^\theta V_{>y}^\theta,V_{<0}^\theta\rangle }{{\rm Tr}( T_0^\theta\dots T_n^\theta )\langle e_1,V_{<0}^\theta\rangle \langle V_{>y}^\theta,e_1\rangle}\lesssim \theta^{-4}
\end{equs}
where in \eqref{tengoTr} we used also \eqref{eq:Vnosmall} and \eqref{eq:Ni2}.

\subsubsection{Case $H_1=0$}
For the upper bound in \eqref{eq:corrstretched}, we use \eqref{eq:prodw2} and \eqref{tengo11} to upper bound the integral in
\eqref{formuloneCovbeta0} as
\begin{eqnarray}
  \label{ub1}
 C e^{S_+(n)+S_-(n)}\int_0^{\pi/2}\frac{1}{\langle T_1^\theta\dots T_n^\theta e_1,e_1\rangle}\dd \theta.
\end{eqnarray}
Observe that
\begin{equ}
\langle T_1^\theta\dots T_n^\theta e_1,e_1\rangle = \sum_{\gamma_1, \dots, \gamma_{n+1} \in \{1,2\}, \, \gamma_1 = \gamma_{n+1} = 1} \prod_{i=0}^{n} (T^\theta_i)_{\gamma_i, \gamma_{i+1}} \, .
\label{paths}
\end{equ}
Let $1 \leq i < j \leq n$ to be chosen later. Recalling \eqref{eq:Ni} and keeping only the two terms corresponding to $\gamma$ being constant or flipping at $i$ and $j$, we deduce
\begin{eqnarray}
\label{possoscegliere}
\langle T_1^\theta\dots T_n^\theta e_1,e_1\rangle\ge e^{2 S_+(n)} + C \theta^2 e^{2 S_+(n) - 2 \Delta S(i,j)}
\end{eqnarray}
Then, \eqref{ub1} is upper bounded by
\begin{eqnarray}
\label{eq:ub2}
C e^{-S(n)} \int_0^{\pi/2} \frac{1}{1+C \theta^2 e^{-2 \Delta S(i,j)}} \dd \theta \, ,
\end{eqnarray}
Taking $j = n$ and using that $\int_0^{+\infty} \frac{\dd u}{1 + \kappa u^2} = \frac{\pi}{2\sqrt{\kappa}}$, we can upper \eqref{eq:ub2} by $C e^{-S(i)}$. Taking the optimal $i$ leads to
\begin{equ}
|\mathrm{Cov}_{\underline \omega}(e,e')| \leq C e^{- \max_{1 \leq i < n} S(i)} \, ,
\end{equ}
which concludes, since by the invariance principle $\frac{1}{\sqrt{n}} \max_{1 \leq i < n} S(i)$ converges in distribution to a strictly positive random variable.

Next, we prove the lower bound  \eqref{eq:stretched4}. Call $I$ the integral in \eqref{formuloneCovbeta0}.
Let $\theta_+>0$ denote the minimum between $1/2$ and the leftmost zero of $\theta\mapsto \cos(\theta x-\frac\pi2(x\!\!\mod 4))$ on $[0,\pi/2]$ (the minumum is there simply to guarantee that $\log(1/\theta_+)>0$). Under the assumptions of the theorem, $\theta_+\gtrsim {\delta}/|x|\gtrsim e^{-\eps_n n^{1/2}}$. Split $I=I'+I''$ with $I'$ the integral restricted to $[0,\theta_+]$.
We first show that $I''$ is negligible, that is $|I''|\le e^{-U_n}$ with $U_n\gg \sqrt n$. First of all, we have
\begin{equ}
|I''| \leq C \max_{\theta \in [\theta_+, \pi/2]} \frac{e^{  S_+(n) + S_-(n)}}{\langle T_0^\theta \dots T_n^\theta e_1, e_1 \rangle}.
\end{equ}
For any $i \leq j$, expanding the product $T_i^\theta\dots T_j^\theta$ as in \eqref{paths} and keeping as above only the paths $\gamma$ which are either constant or flip exactly at $i$ and $j$, we have
\begin{equs}
e^{- \Delta S_+(i,j) - \Delta S_-(i,j)} \, \langle T_i^\theta \dots T_j^\theta e_1, e_1 \rangle &\geq C^{-1} e^{- \Delta S_+(i,j) - \Delta S_-(i,j)} (e^{2 \Delta S_+(i,j)} + \theta_+^2 e^{2 \Delta S_-(i,j)}) \\
&\geq C^{-1} (e^{\Delta S(i,j)} + \theta_+^2 e^{- \Delta S(i,j)}) \geq C^{-1} \theta_+^2 e^{|\Delta S(i,j)|} \, .
\end{equs}
As a consequence we have
\begin{equ}
e^{-  S_+(n) -  S_-(n)} \, \langle T_1^\theta \dots T_n^\theta e_1, e_1 \rangle \geq C^{-1} \theta_+^2 \prod_{i=0}^{\cal T-1} C^{-1} \theta_+^2 e^{|\Delta S(\tau_i, \tau_{i+1})|} \, ,
\end{equ}
for any integers $\cal T$ and $0=
\tau_0< \dots < \tau_{\cal T} < n$. Let us take $\tau_i$ as the (random) minimal integer such that
\begin{eqnarray}
  C^{-1} \theta_+^{2} e^{|\Delta S(\tau_{i-1}, \tau_i)|} \ge \theta_+^{-2} \, ,
\end{eqnarray}
and $\cal T$ as the (random) maximum index such that $\tau_{\cal T} < n$.
Then,
\begin{eqnarray}
e^{-  S_+(n) -  S_-(n)} \, \langle T_1^\theta \dots T_n^\theta e_1, e_1 \rangle \ge C^{-1} \theta_+^2 e^{\cal{T} \log(1/\theta_+^2)}.
\end{eqnarray}
To conclude note that, as $n \to \infty$, $\theta_+\gtrsim e^{-\epsilon_n\sqrt n}$ and, with high probability, $\cal T \log(1/\theta_+^2) \gtrsim \frac{n}{\log(1/\theta_+)} \gg \sqrt{n}$. The latter is a simple consequence of the fact that $\tau$ is a renewal process with $\E[\tau_1] \lesssim (\log(1/\theta_+))^2$ (by the invariance principle) and  of the lower bound on $\theta_+$.
\ms

Finally, we give a lower bound on the integral $I'$.
Using \eqref{eq:Vnosmall} and \eqref{eq:Ni2}, we have
\begin{equs}
  \label{ang}
  w_2(0)\dots w_2(y-1) &\langle e_1,V_{<0}^\theta\rangle\langle V_{>y}^\theta,e_1\rangle \gtrsim \theta^2 e^{S_+(n)+S_-(n)}\\
\langle
T_0^\theta \dots T^\theta_n V_{>y}^\theta,V_{<0}^\theta\rangle
&\lesssim \sum_{a,b=1}^2 \langle T_0^\theta \dots T^\theta_{n}e_a,e_b\rangle\lesssim \theta^{-2} \langle T_0^\theta\dots T^\theta_{n}e_1,e_1\rangle.
\end{equs}
From \eqref{eq:Ni},
\begin{eqnarray}
  \label{eq:Nii}
  \langle T_0^\theta\dots T^\theta_{n} e_1,e_1\rangle\le e^{n C \theta^2} e^{S_+(n)+S_-(n)}
  \sum_{0\le i\le n/2}\sum_{0 = n_0 \le n_1<\dots<n_{2i}\le n}\\
\times
(C\theta)^{2i}\prod_{j=1}^{i}\Big(e^{\Delta S(n_{2j-2},n_{2j-1})-\Delta S(n_{2j-1},n_{2j})}\Big)e^{\Delta S(n_{2i},n)} \, .
\end{eqnarray}
Let $Z(n):= \log n + \max_{0\le i<j\le n} |\Delta S(i,j)|>0$ and note
that this random variable is of order $n^{1/2}$.  Since
$|\cos(\theta x-\frac\pi2(x\!\!\mod 4))|$ does not change sign on
$[0,\theta_+]$ (assume w.l.o.g. that the cosinus is positive there),
we restrict the integral to
$\cal I=[0,e^{-5 \max(Z(n),\sqrt{\eps_n} n^{1/2})}]$, where the
cosinus is larger than, say, $\delta/2$. Note that with high
probability $e^{-5 Z(n)}<e^{-5 \sqrt{\eps_n} n^{1/2}}\ll\theta_+$ since
the latter is at least of order $e^{-\eps_n n^{1/2}}$. For values of
$\theta$ in this interval, we can bound \eqref{eq:Nii} with a
converging geometric series:
\begin{eqnarray}
  \label{eq:Niii}
  \langle T_0^\theta\dots,T^\theta_{n}e_1,e_1\rangle \lesssim e^{S_+(n)+S_-(n)}
  \sum_{i\ge 0}\left(n^2 e^{3 Z(n)} C^2 e^{-10 Z(n)}\right)^i\lesssim e^{S_+(n)+S_-(n)},
\end{eqnarray}
where the constants implicit in the bounds are independent of $n$ and of the disorder realization. Therefore, collecting \eqref{ang} and \eqref{eq:Niii}, we have that on the interval $\cal I$, the integrand of \eqref{formuloneCovbeta0} is lower bounded by a universal constant times $\delta \theta^4$.
This implies that
\begin{eqnarray}
  \label{eq:ang2}
  I'\gtrsim \delta e^{- \sqrt{n} W(n)}, \quad W(n)\eqdef 20 \max\Big(\frac {Z(n)}{\sqrt n},\sqrt{\eps_n}\Big).
\end{eqnarray}
Since $W(n)$ tends in distribution to a strictly positive and finite random variable, we have with high probability $I'\gg |I''|$, so that $I=I'+I''\gtrsim I'\gtrsim \delta e^{- \sqrt{n} W(n)}$ and \eqref{eq:stretched4} follows.

\subsection{Case $H_1>0$}

Write
\begin{eqnarray}
   \frac{\pi^2}{4w_1(0)w_1(y)} {\rm Cov}_{\underline w}(e,e')=I_1(I_2+(-1)^{y+1}I_3)
\end{eqnarray}
where $I_i,i=1,2,3$ denote the three integrals in \eqref{formuloneCov}.
First of all, we prove that $I_1 I_3$ decays exponentially fast with $n$, and is therefore negligible for \eqref{eq:corrstretched2} and \eqref{eq:stretched3}.
Indeed, using \eqref{tengoTr},
\begin{eqnarray}
  \label{eq:I1I3}
  |I_1 I_3|\le (\theta_c)^{-4}e^{2 S_+(n)+2S_-(n)}\left(\int_{\theta_c}^{\pi/2}\frac{\dd \theta}{{\rm Tr}(T_0^\theta\dots T_n^\theta)}
  \right)^2.
\end{eqnarray}
The random function
\begin{eqnarray}
  \label{eq:gn}
\theta\mapsto g_n(\theta):=\frac1n\log {\rm Tr}(T_0^\theta\dots T_n^\theta)
\end{eqnarray}
 converges $\P$-a.s. to $\cL(\theta)=L(2\sin(\theta))>0$, that is bounded away from zero on $[\theta_c,\pi/2]$, and the convergence is uniform over compact subsets of $(0,\pi/2]$, as follows from Lemma \ref{lem:alternative} and Remark \ref{rem:also}. Since $S_+,S_-$ are centered random walks with finite variance, the claimed exponential decay follows.

To prove  \eqref{eq:corrstretched2}, we use \eqref{tengoTr} to write with the notation \eqref{eq:gn}
\begin{eqnarray}
|I_1 I_2|\le (\theta_c)^{-4} \Big(\int_{\theta_c}^{\pi/2}e^{-n [g_n(\theta)-g_n(\theta_c)]}{\dd \theta}\Big) \left(\int_0^{\theta_c}e^{n [g_n(\phi)-g_n(\theta_c)]}\dd \phi
  \right).\end{eqnarray}
Since $\phi\mapsto g_n(\phi)$ is increasing, $g_n\to \cL$ uniformly together with all its derivatives and  $\partial_\theta\cL(\theta)>0$ over compact sets of $(0,\pi/2]$,
we can first restrict the second integral to $[\theta_c/2,\theta_c]$ (up to an additive error term decaying exponentially with $n$) and then bound
\begin{eqnarray}
  \label{eq:thenbo}
  g_n(\theta)-g_n(\theta_c)\ge C^{-1}(\theta-\theta_c)\quad \theta\in[\theta_c,\pi/2]\\
  g_n(\phi)-g_n(\theta_c)\le -C^{-1}(\theta_c-\phi)\quad \phi\in[\theta_c/2,\theta_c],
\end{eqnarray}
with $C$ an almost surely strictly positive random variable. Performing the integral then gives $|I_1 I_2|\lesssim n^{-2}$, which proves  \eqref{eq:corrstretched2}.

Finally, we prove \eqref{eq:stretched3}.
Up to exponentially small (in $n$) error terms,
\begin{eqnarray}
  \label{eq:I1I22}
  I_1 I_2=\int_{\theta_c}^{\pi/2} \cos\Big(\theta x-\frac\pi2(x\!\!\!\!\mod 4)\Big)
   \frac{\langle e_1,V_{<0}^\theta\rangle\langle V_{>2n+1}^\theta,e_1\rangle}{\langle
T_0^\theta\dots,T^\theta_n V_{>2n+1}^\theta,V_{<0}^\theta\rangle
   }\dd \theta \\
   \times
\int_{\theta_c/2}^{\theta_c}\cos\Big(\phi x-\frac\pi2(x\!\!\!\!\mod 4)\Big){\langle M_1^\phi T_1^\phi\dots,T^\phi_{n-1}M^\phi_{2n}e_1,e_1 \rangle}\dd \phi.
\end{eqnarray}
Let $\theta_-$ (resp. $\theta_+$) be the first zero  $\theta\mapsto \cos(\theta x-\frac\pi2(x\!\!\mod 4))$ to the left (resp. right) of  $\theta_c$, so that $\theta_+-\theta_-=\pi/ |x|$.
Because of the assumption $|\cos(x\theta_c-\frac\pi2(x \!\!\mod 4))|\ge {\delta}_2$, we have  $|\theta_\pm-\theta_c|\gtrsim {\delta_2}/|x|$. Split $I_1=I_1'+I_1'',I_2=I_2'+I_2''$, where in $I'_1$ (resp. $I'_2$) the integral is restricted to $[\theta_c,\theta_+]$  (resp. $[\theta_-,\theta_c]$).
The terms involving $I_1'' $ or $I_2''$ are negligible. For instance, using \eqref{tengoTr} and bounding the $1,1$  element of a positive matrix by its trace,
\begin{eqnarray}
  |I_1' I_2''|\lesssim (\theta_c)^{-4} \int_{\theta_c}^{\theta_+}e^{-n[g_n(\theta)-g_n(\theta_c)]}{\dd \theta}
  \int_{\theta_c/2}^{\theta_-}e^{n [g_n(\phi)-g_n(\theta_c)]}\dd \phi.
\end{eqnarray}
Using \eqref{eq:thenbo} and the conditions $(\theta_c-\theta_-)\gtrsim 1/|x|$,  $|x|\le \delta_1 n$, we see that $|I_1' I_2''|\le  n^{-2}e^{-\delta_2/(\delta_1C)}$. Similarly, one proves that $|I_1'' I_2'|,|I_1'' I_2''|\le  n^{-2}e^{-\delta_2/(\delta_1C)}$.

Finally, we bound $I_1' I_2'$ from below.
Since $\theta\mapsto \cos(x\theta-\frac\pi2(x \!\!\mod 4))$ does not change sign in $(\theta_-,\theta_+)$, restricting the integration to $\theta\in [\theta_c,\theta_c+(\theta_+-\theta_c)/2]$ and $\phi\in [\theta_c-(\theta_c-\theta_-)/2,\theta_c]$ we have using \eqref{eq:Ni2} and \eqref{eq:thenbo} and assuming that $\delta_1\ll\delta_2$
\begin{equs}
  I_1 I_2 &\gtrsim \delta_2^2 (\theta_c)^{4}\int_{\theta_c}^{\theta_c+\frac{\theta_+-\theta_c}2}e^{-n C^{-1}(
  \theta-\theta_c)}\dd \theta \int_{\theta_c-\frac{\theta_c-\theta_-}2}
  ^{\theta_c}e^{-n C^{-1}(\theta_c-\phi)}\dd \phi\\
  &\gtrsim \frac{\delta_2^2 }{n^2} \gg \frac{e^{-\delta_2/(\delta_1C)}}{n^2}.
\end{equs}
where in the second inequality we used $|\theta_\pm-\theta_c|n \gtrsim n\delta_2/|x|\gtrsim (\delta_2/\delta_1)\gg 1$.

\subsection{Proof of Theorem \ref{th:congas}}
\label{sec:thcongas}

We start from formula \eqref{eq:FgammaFormula}. Since we work at $H_2 = 0$, we omit the latter in the notations. Observe that $\cL_\gamma(\theta)\eqdef \cL_\gamma(\theta,0) = L_\gamma(2 \sin(\theta))$ with notations of Remark \ref{rmk:Lgamma}. Using this latter Remark, we deduce in particular that $\cL_\gamma$ is continuous and strictly increasing on $[0, \pi/2]$ (so that its inverse $\cL_\gamma^{-1}$ is well defined). As a consequence $F_\gamma(H_1)$ can be rewritten, similarly to \eqref{formulaF0}, as
\begin{equ}
  \label{formulaF0gamma}
     F_\gamma(H_1)=
     \begin{cases}
       F_\gamma(0)\eqdef\frac1{\pi}    \int_0^{\pi/2}\cL_\gamma(\theta)d\theta & \text{if } H_1\le \log \gamma\\
       F_\gamma(0)+\frac1\pi\int_{{\log \gamma}}^{H_1}
       \cL_\gamma^{-1}(y) dy& \text{ if } \log \gamma\le H_1\le H_{c,\gamma}\\
       \frac{H_1}2 & \text{ if } H_1\ge H_{c,\gamma}
     \end{cases}.
   \end{equ}
with $H_{c,\gamma}$ defined in \eqref{Hstargamma}.
For $H_2=0$, the matrix $T^\theta_\gamma$ can be bounded (entry-wise) as
\begin{eqnarray}
  \label{eq:Tgammabounds}
  {(\gamma w_2)^2}\,T^\theta_{-}\le T^\theta_\gamma\le (1+C \theta^2) {(\gamma w_2)^2}\,T^\theta_+,
\end{eqnarray}
where $C$ is a  constant, depending only on $\gamma$ and on the bounds on the support of the disorder, and
\begin{eqnarray}
  T^\theta_-\eqdef \begin{pmatrix}
    1 & C^{-1}\theta\\
    C^{-1}\theta Z_\gamma& Z_\gamma,
                   \end{pmatrix}
  \quad
  T^\theta_+\eqdef
  \begin{pmatrix}
  1 & C\theta\\
    C\theta Z_\gamma& Z_\gamma
  \end{pmatrix}, \quad Z_\gamma\eqdef \frac1{\gamma^4}\Big(\frac{w'_2}{w_2}\Big)^2.
\end{eqnarray}
Note that
$\mathbb E(\log Z_\gamma)<0$ (because $\gamma>1$) and $\mathbb E(Z_1)>1$ (by Jensen's inequality, since $w_2,w_2'$ are independent and non-degenerate).
From \eqref{eq:Tgammabounds}, it holds
\begin{eqnarray}
  \label{eq:LL}
  \log \gamma+\frac12{\rm Lyap}(T^\theta_-)\le \cL_\gamma(\theta)\le  \log \gamma+\frac12{\rm Lyap}(T^\theta_+)+O(\theta^2).
\end{eqnarray}

Define $\gamma_c$ as in \eqref{gammac}, so that  $\mathbb E(Z_{\gamma})\le1$ iff $\gamma\ge\gamma_c$.
To get the asymptotics of $F_\gamma(H_1)$ for $H_1\searrow \log \gamma$, we need the asymptotics of
the top Lyapunov exponent of
 $ \begin{pmatrix}
  1 & \eps\\
    \eps Z_\gamma& Z_\gamma
\end{pmatrix}
$
for $\eps>0$ small. Random matrices of this kind show up surprisingly often in mathematical physics, and the behavior of their Lyapunov exponent for $\eps\to0$ was predicted by  Derrida and Hilhorst  \cite{DH}. We will rely on rigorous versions of the Derrida-Hilhorst result, that we collect here:
\begin{theorem}
  \label{th:DH}
  Let $L(\epsilon)$ be the top Lyapunov exponent of the random matrix
$
  \begin{pmatrix}
  1 & \eps\\
    \eps Z& Z
  \end{pmatrix}
$
with $\eps>0$ and $Z$ a non-negative, non-degenerate random variable, compactly supported on $(0,\infty)$. Assume that $\mathbb E(\log Z)<0$ and define   $\alpha$ as the unique strictly positive value  such that $\mathbb E(Z^\alpha)=1$, or $\alpha=\infty$ if no strictly positive solution exists. The following holds as $\eps\to0$:
\begin{itemize}
\item If  $\alpha\in(0,1)$ (i.e., if $\mathbb E(Z)>1$)  and if the law of $\log Z$ has a  $C^1$ density, then
$    L(\eps)\sim C \eps^{2\alpha}.
$
\item If $\alpha\in(1,\infty]$ then
 $   L(\eps)\sim C \eps^{2}.$
\item If $\alpha=1$ then
$    L(\eps)=\eps^{2+o(1)}.
$\end{itemize}
In all cases, $C$ denotes a positive finite constant depending on the law of $Z$.
\end{theorem}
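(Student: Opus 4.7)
The plan is to reduce the computation of $L(\epsilon)$ to an expectation under the invariant measure of the projective dynamics on $\R_+$, and then extract its asymptotics from Kesten's renewal theorem for stationary solutions of random affine recursions.

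First, parameterize the action of $M(\epsilon)$ by the ratio $r := y/x$ of the transported vector $(x,y)^T$. A direct computation gives the projective recursion
\[
r \mapsto \frac{Z(r+\epsilon)}{1+\epsilon r},
\]
together with the multiplicative factor $x \mapsto x(1+\epsilon r)$ on the first coordinate. Iterating and using Birkhoff's ergodic theorem (noting that $r_n$ remains bounded by a constant depending on the support of $Z$ and on $\epsilon$),
\[
L(\epsilon) = \E_{\nu_\epsilon}[\log(1+\epsilon r)],
\]
where $\nu_\epsilon$ is the unique invariant measure of the projective dynamics (uniqueness being a consequence of the contraction estimates of Section \ref{sec:Lyapunov} combined with Furstenberg's theorem).

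The second step is a scaling observation. When $\epsilon r \ll 1$ — which, a posteriori, is the regime where $\nu_\epsilon$ concentrates — the denominator $1+\epsilon r$ is negligible and the recursion is well approximated by $r \mapsto Z(r+\epsilon)$. Substituting $r = \epsilon S$ turns this into the $\epsilon$-independent random affine recursion $S \mapsto Z(S+1)$. Under the standing hypothesis $\E(\log Z) < 0$, together with $\E(Z^\alpha) = 1$ and the $C^1$-density assumption (supplying the non-arithmeticity of $\log Z$ needed when $\alpha < 1$), Goldie's version of Kesten's theorem gives the sharp tail $\P(S > x) \sim c\, x^{-\alpha}$ as $x \to \infty$. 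Substituting back and applying the layer-cake formula,
\[
L(\epsilon) \approx \E[\log(1+\epsilon^2 S)] = \int_0^\infty \frac{\P(\epsilon^2 S > x)}{1+x}\, dx,
\]
which splits into three regimes: if $\alpha > 1$, then $\E(S) < \infty$ and a first-order Taylor expansion yields $L(\epsilon) \sim \epsilon^2 \E(S)$; if $\alpha < 1$, the tail dominates and, after the change of variable $x \mapsto \epsilon^{-2} x$, one obtains $L(\epsilon) \sim C\epsilon^{2\alpha}$ with $C = c \int_0^\infty dx/(x^\alpha(1+x))$; at $\alpha = 1$, the two contributions are of the same order up to a logarithmic factor, which is absorbed into the $o(1)$ exponent and gives only $L(\epsilon) = \epsilon^{2+o(1)}$.

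The main obstacle is the rigorous transfer of the tail information from the affine chain $S \mapsto Z(S+1)$ to the perturbed projective chain $r \mapsto Z(r+\epsilon)/(1+\epsilon r)$: one needs quantitative convergence of the rescaled $\nu_\epsilon$ to the invariant law of $S$, together with uniform integrability of $\log(1+\epsilon r)$ against this family. Carrying out this analysis from scratch is delicate, but it is precisely what is done in the rigorous versions of the Derrida--Hilhorst predictions: the case $\alpha \in (1, \infty]$ follows from \cite{GGG, Havret}, while the cases $\alpha < 1$ and $\alpha = 1$ are covered by \cite{collin2024large, de2024scaling, collin2025lyapunov}. The cleanest route is therefore to invoke these results case by case rather than reproving them.
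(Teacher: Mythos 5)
Your heuristic is essentially the Derrida--Hilhorst reduction (projective chain, rescaling $r=\eps S$, Kesten--Goldie tail for the affine recursion $S\mapsto Z(S+1)$, layer-cake integration), and your eventual conclusion --- that one should simply invoke the rigorous versions of this programme case by case --- is exactly what the paper does: the paper's ``proof'' of Theorem~\ref{th:DH} is a one-line attribution to the literature. However, your assignment of references to cases is scrambled, and this matters. The paper cites \cite{GGG} for the heavy-tail case $\alpha\in(0,1)$ --- consistent with your own (correct) observation that the $C^1$-density hypothesis is needed there to supply the regularity required by the Kesten--Goldie tail --- and \cite{Havret} for $\alpha\ge 1$, including the critical $\alpha=1$. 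You have instead assigned $\alpha\in(1,\infty]$ to \cite{GGG,Havret} and $\alpha\le 1$ to \cite{collin2024large,de2024scaling,collin2025lyapunov}. Those last three references appear elsewhere in the paper for a genuinely different asymptotic regime, namely $\E[\log Z]=0$, where the Lyapunov exponent behaves like $\mathrm{Var}(\log Z)/(4\log(1/\eps))$; they do not cover the $\E[\log Z]<0$, $\alpha<1$ power-law regime that Theorem~\ref{th:DH} addresses. If your attributions were followed literally, the $\alpha\le 1$ cases would be left unsupported. Apart from this bookkeeping error, the strategy and heuristic match the paper's.
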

The result for $\alpha\in (0,1)$ is due to
\cite{GGG}, and the one for $\alpha\ge 1$ to \cite{Havret} (we mention that \cite{Havret} provides a much more refined expansion beyond the dominant $\eps^2$ term).

Now we go back to the problem of obtaining the asymptotics of $\cL_\gamma(\theta),\theta\to0$. Define $\alpha=\alpha(\gamma)$ as in Theorem \ref{th:DH} with the random variable $Z$ given by $Z_\gamma$, that is, as the positive solution to
\begin{eqnarray}
  \label{eq:alphagamma}
  \mathbb E(Z_\gamma^{\alpha(\gamma)})=1, \quad \text{i.e.}\quad \gamma^{4\alpha(\gamma)}=\mathbb E((w_2)^{2\alpha(\gamma)})\mathbb E((w_2)^{-2\alpha(\gamma)}),
\end{eqnarray}
or $+\infty$ if no positive solution exists.
Note that  $\alpha(\gamma)\le 1$ iff $\gamma\le \gamma_c$ and $\alpha(\gamma)=\infty$ for $\gamma$ large enough, since $w_2$ is bounded away from zero and infinity, so that $Z_\gamma<1$ almost surely if $\gamma$ is large enough. It is also easy to see from \eqref{eq:alphagamma} that \eqref{eq:ag2} holds.
If $\gamma\ge \gamma_c$, so that $\alpha(\gamma)\ge 1$, Theorem \ref{th:DH} and \eqref{eq:LL} imply that
\begin{eqnarray} \label{e:AsympLgamma3/2}
  \cL_\gamma(\theta)-\log\gamma\asymp \theta^2.
\end{eqnarray}
Applying \eqref{formulaF0gamma} immediately gives \eqref{eq:o1} with $\beta(\gamma)=3/2$. If instead $\gamma<\gamma_c$ we cannot apply directly  Theorem \ref{th:DH}, because under Assumption \ref{ass:disordine} the law of $\log Z_\gamma$ does not necessarily have a density (if it does and the density is $C^1$, then we directly get \eqref{eq:o2}).
This difficulty can be easily bypassed.
Let $X$ be a random variable (independent from all the other randomness) supported on $[0,1]$, whose law admits a $C^\infty$ density, and $\delta>0$. Define $T^\theta_{\pm,\delta}$ like $T^\theta_\pm$,  with $Z_\gamma$ replaced by
$Z_\gamma e^{\pm \delta X}$. By mononotonicity, the Lyapunov exponent of $T^\theta_{+,\delta}$ (resp. $T^\theta_{-,\delta}$) is larger (smaller) than that for $\delta=0$. Also, $\log Z_\gamma\pm\delta X$ has a $C^\infty$ and compactly supported density. 
Finally, it is easy to realize that $\delta\mapsto \alpha_\pm(\gamma,\delta)$ (defined as the positive solution of \eqref{eq:alphagamma} with $Z_\gamma$ replaced by $Z_\gamma e^{\pm \delta X}$) are continuous in $\delta$. At this point we can apply Theorem \ref{th:DH} and deduce (playing with the arbitrariness of $\delta$) that
\begin{eqnarray} \label{e:AsympLgammasingular}
  \cL_\gamma(\theta)-\log \gamma\stackrel{\theta\to0}=\theta^{2\alpha(\gamma)+o(1)}.
\end{eqnarray}
   Applying \eqref{formulaF0gamma} immediately gives \eqref{eq:o1} with $\beta(\gamma)=1+1/(2\alpha(\gamma))$.

\subsection{Singular behaviour of $H_2\mapsto \Delta_\gamma(H_2)$}

   \label{sec:conj}
   Here we prove an integrated version of the conjecture
   \eqref{eq:conj}. 
Write
\begin{equation}
\tilde \cL_\gamma(H_2)\eqdef  \Delta_{\gamma} (H_2)-\log\gamma = L_{\gamma} (\iota \sinh (H_2))-\log\gamma,
\end{equation}
with the notation \eqref{eq:Lgammaz}.
Assume for simplicity that  the law of $\log w_2$ has a $C^1$ density with respect to Lebesgue's measure.
We know from Section \ref{sec:thcongas} that
\begin{equation}
  \tilde{L}_{\gamma} (\varepsilon) \eqdef L_{\gamma} (\varepsilon) - \log
  \gamma \stackrel{\varepsilon \searrow 0} = C \varepsilon^{2 \min(\alpha (\gamma),1)}, \varepsilon
  \in \mathbb{R}^+.
\end{equation}




 By Remark \ref{rmk:Lgamma}, $z \mapsto L_{\gamma} (z)$ is continuous
on $\overline{\mathbb{H}}$ and harmonic on $\mathbb{H}$. Since $L_\gamma(z)$ grows  sublinearly (logarithmically) for $|z|\to\infty$,  we can write for $\eps\searrow 0$
\begin{equation}
 \frac{2\varepsilon}{\pi} \int_{\mathbb{R_+}}
\frac{\tilde L_\gamma(\iota y)}{\varepsilon^2 + y^2} \dd y=\tilde{L}_{\gamma} (\varepsilon) \sim C\varepsilon^{2 \min(\alpha (\gamma),1)}
\end{equation}
so that
\begin{equation}
  C\varepsilon^{2 \min(\alpha (\gamma),1)}(1-2^{1-2\min(\alpha (\gamma),1)})
  \sim \tilde L_\gamma(\eps)-2\tilde L_\gamma(\eps/2)=-\int_{\R_+}\tilde L_\gamma(\iota\eps u)f(u)\dd u,
\end{equation}
with
\begin{equation}
   \quad f(u)=\frac{6}{\pi(1+u^2)(1+4u^2)}.
\end{equation}
This can be rewritten as
\begin{equation}
  \label{almostlaplace}
\mathbb E(\tilde\cL_\gamma( {\rm asinh}(H_2 X)))=  \mathbb E(\tilde L_\gamma(\iota H_2 X))\stackrel{H_2\searrow 0}\sim c_{\alpha(\gamma)} |H_2|^{2 \min(\alpha (\gamma),1)}
\end{equation}
with $c_\alpha > 0$ if $\alpha \in (0,1/2)$ and $c_\alpha < 0$ if $\alpha \in (1/2, 1]$, and where $X$ is the positive random variable with density equal $f$.
   
   \begin{appendix}
\section{The non-disordered dimer model}
\label{app:pure}
Here we collect a few standard facts about the non-disordered dimer model. First of all, when $w_1$ and $w_2=1$ are constants, the top Lyapunov exponent of \eqref{e:Matrixnew} is immediately computed as    \begin{equ}
     \cL_{pure}(\theta)=\log\left( w_1 \sin(\theta) + \sqrt{1 +  w_1^2 \sin(\theta)^2}\right)
   \end{equ}
   so that  $\cL_{pure}(\theta)\sim w_1 \theta $ for $\theta\downarrow 0$ and
   \begin{eqnarray}
     \label{solidpure}
   \cL_{pure}(\pi/2)-\cL_{pure}(\theta)\sim \frac {w_1}{2\sqrt{1+w_1^2}} (\theta-\pi/2)^2 \text{ as }  \theta\uparrow \pi/2.
   \end{eqnarray}
Defining
$\mathbf H_{c}:=\cL_{pure}(\pi/2)=\log(w_1+\sqrt{1+w_1^2})$,
\eqref{32pure} easily follows.

Secondly, the free energy $\mathbf F_\gamma$ of the non-disordered version of the model defined in Section \ref{sec:congas} can be written, using Kasteleyn's theory for general bipartite periodic graphs \cite{KOS},
as
\begin{multline}
  \mathbf F_\gamma(H_1,H_2)=\int_{\mathbb T_{H_1,H_2}}\log P(z_1,z_2)\frac{\dd z_1}{2\pi\iota z_1} \frac{\dd z_2}{2\pi\iota z_2},\quad \mathbb T_{H_1,H_2}=\{z_1,z_2\in \C:|z_i|=e^{2H_i}\}\\
  \label{eq:P(zw)}
  P(z_1,z_2)=z_1+\frac1{z_1}+w_1^2\Big(z_2+\frac1{z_2}\Big)+\gamma^2+\frac1{\gamma^2}+2w_1^2.
\end{multline}
The gaseous region $G$ of the plane $(H_1,H_2)$ is defined \cite{KOS} as the region including $(0,0)$ where $P$ has no zeros on $\mathbb T_{H_1,H_2}$. It is easy to see that,  as soon as $\gamma>1$, $G$ is non-empty and its intersection with the $H_2=0$ axis is the segment $(-\log \gamma,\log \gamma)$.  $\mathbf F_\gamma$ is affine on $G$.
\end{appendix}

\section*{Acknowledgments}
We are very grateful to Orph\'ee Collin and Giambattista Giacomin
for enlightening discussions and for suggesting Refs.
\cite{de2024scaling,Havret,GGG}.  This research was funded by the Austrian
Science Fund (FWF) 10.55776/F1002.  For open access purposes, the
authors have applied a CC BY public copyright license to any author
accepted manuscript version arising from this submission.

\printbibliography

\end{document}